\documentclass{amsart}
\usepackage{graphicx}
\usepackage{amssymb}
\usepackage{amsfonts}
\usepackage{todonotes}
\usepackage{dsfont}
\usepackage{amsmath,amsfonts,amsthm}
\usepackage{mathrsfs}
\setlength{\footskip}{1cm}
\swapnumbers
\sloppy
\vfuzz2pt 
\hfuzz2pt 
\newtheorem{theorem}{Theorem}[section]
\newtheorem{lemma}[theorem]{Lemma}
\newtheorem{corollary}[theorem]{Corollary}
\newtheorem{proposition}[theorem]{Proposition}
\theoremstyle{definition}

\newtheorem{remark}[theorem]{Remark}
\newtheorem{example}[theorem]{Example}

\numberwithin{equation}{section}
 \theoremstyle{plain}
 \newtheorem{thm}{Theorem}[section]
 \numberwithin{equation}{section} 
 \numberwithin{figure}{section} 
 \theoremstyle{plain}
 \theoremstyle{remark}
 \newtheorem*{acknowledgement*}{Acknowledgement}

\newcommand{\cB}{{\mathcal B}}

\newcommand{\cF}{{\mathcal F}}
\newcommand{\cG}{{\mathcal G}}
\newcommand{\cH}{{\mathcal H}}
\newcommand{\cI}{{\mathcal I}}
\newcommand{\cJ}{{\mathcal J}}

\newcommand{\cL}{{\mathcal L}}

\newcommand{\cP}{{\mathcal P}}

\newcommand{\cW}{{\mathcal W}}


\newcommand{\te}{{\theta}}
\newcommand{\vt}{{\vartheta}}
\newcommand{\Om}{{\Omega}}
\newcommand{\om}{{\omega}}
\newcommand{\ve}{{\varepsilon}}
\newcommand{\del}{{\delta}}
\newcommand{\Del}{{\Delta}}
\newcommand{\gam}{{\gamma}}
\newcommand{\Gam}{{\Gamma}}

\newcommand{\vr}{{\varrho}}
\newcommand{\Sig}{{\Sigma}}
\newcommand{\sig}{{\sigma}}
\newcommand{\al}{{\alpha}}
\newcommand{\be}{{\beta}}
\newcommand{\ka}{{\kappa}}

\newcommand{\vrho}{{\varrho}}

\newcommand{\vs}{{\varsigma}}

\newcommand{\bbB}{{\mathbb B}}

\newcommand{\bbR}{{\mathbb R}}
\newcommand{\bbS}{{\mathbb S}}

\newcommand{\bbZ}{{\mathbb Z}}
\newcommand{\bbI}{{\mathbb I}}
\newcommand{\bbW}{{\mathbb W}}

\newcommand{\bbQ}{{\mathbb Q}}
\newcommand{\bbU}{{\mathbb U}}
\newcommand{\bbV}{{\mathbb V\underline{}}}




\newcommand{\R}{\mathbb{R}}

\newcommand{\op}{\mathcal{P}}

\newcommand{\eps}{\varepsilon}

\begin{document}
\title[]{Almost sure diffusion approximation in averaging\\
via rough paths theory}%
 \vskip 0.1cm
 \author{Peter Friz\quad\quad\quad and \quad\quad\quad Yuri Kifer\\
\vskip 0.1cm
Institut f\" ur Mathematik \quad\quad\quad\quad\quad\quad  Institute  of Mathematics\\
 Technische Universit\" at\quad\quad\quad\quad\quad\quad\quad\quad Hebrew University\\
\& WIAS Berlin - Berlin, Germany.\quad\quad\quad\quad Jerusalem, Israel}
\address{
Institut f\" ur Mathematik, Technische Universit\" at, Berlin, Germany}
\email{friz@math.tu-berlin.de}
\address{
WIAS Berlin, Berlin, Germany}
\email{friz@wias-berlin.de}
\address{
Institute of Mathematics, The Hebrew University, Jerusalem 91904, Israel}
\email{ kifer@math.huji.ac.il}%

\thanks{ }
\subjclass[2000]{Primary: 34C29 Secondary: 60F15, 60L20}%
\keywords{averaging, diffusion approximation, $\phi$-mixing,
 stationary process, shifts, dynamical systems.}%
\dedicatory{  }
 \date{\today}
\begin{abstract}\noindent
The paper deals with the fast-slow motions setups in the continuous time
$\frac {dX^\ve(t)}{dt}=\frac 1\ve\sig(X^\ve(t))\xi(t/\ve^2)+b(X^\ve(t)),\, t\in [0,T]$ and the discrete time $X_N((n+1)/N)=X_N(n/N)+N^{-1/2}\sig(X_N(n/N))\xi(n)+N^{-1}b(X_N(n/N))\xi(n)$, $n=0,1,...,[TN]$
 where $\sig$ and $b$ are smooth matrix and vector functions, respectively, $\xi$ is a centered stationary
 vector stochastic process and $\ve, 1/N$ are small parameters. We derive, first, estimates in the strong
 invariance principles for sums
 $S_{N}(t)=N^{-1/2}\sum_{0\leq k< [Nt]}\xi(k)$ and iterated sums $\bbS^{ij}_{N}(t)=N^{-1}\sum_{0\leq k<l<[Nt]}\xi_i(k)\xi_j(l)$ together with the corresponding results for integrals in the continuous time case
  which, in fact, yields almost sure invariance principles for iterated sums and integrals of any order and, moreover,
 implies laws of iterated logarithm for these objects.
 Then, relying on the rough paths theory, we obtain strong almost sure approximations of processes $X^\ve$ and
 $X_N$ by corresponding diffusion processes $\Xi^\ve$ and $\Xi_N$, respectively. Previous results for the above
  setup dealt either with weak or moment diffusion approximations and not with almost sure approximation
  which is the new and natural generalization of well known works on strong invariance principles for sums of
  weakly dependent random variables.
\end{abstract}
\maketitle
\tableofcontents
\markboth{P.Friz and Yu.Kifer}{Diffusion approximation}
\renewcommand{\theequation}{\arabic{section}.\arabic{equation}}
\pagenumbering{arabic}

\section{Introduction}\label{roughsec1}\setcounter{equation}{0}
The study of the asymptotic behavior as $\ve\to 0$ of solutions $X^\ve$ of systems of ordinary differential equations having the form
\begin{equation}\label{1.1}
\frac {dX^\ve(t)}{dt}=\frac 1\ve B(X^\ve(t),\xi(t/\ve^2))+b(X^\ve(t),\,\xi(t/\ve^2)),\,\, t\in[0,T]
\end{equation}
has already more than a half century history. Here $B(\cdot,\xi(s))$ and $b(\cdot,\xi(s))$ are (random) smooth vector fields on $\bbR^d$ and $\xi$ is a stationary process which is viewed as a fast motion while $X^\ve$ is considered as a slow motion. Assuming that
\begin{equation}\label{1.2}
EB(x,\xi(0))\equiv 0
\end{equation}
for all $x$ it was shown in a series of papers \cite{Kha}, \cite{PK} and \cite{Bor} that $X^\ve$ converges weakly as $\ve\to 0$ to a diffusion process provided $\xi$ is sufficiently fast mixing with respect to $\sig$-algebras generated by $\xi$ itself. The latter condition is quite restrictive when $\xi$ is generated by a dynamical system, i.e. when $\xi(t)=g\circ F^t$ where $g$ is a vector function and $F^t$ is a flow (continuous time dynamical system) preserving certain measure which makes $\xi$
a stationary process. In order to derive weak convergence of $X^\ve$ to a diffusion for $\xi$ built by a sufficiently large class of observables $g$ and dynamical systems other approaches were developed recently based mainly on the rough paths
theory (see e.g. \cite{KM,KM2,BC,CFKMZorg,CFKMZ, DOP}). All above mentioned results can be obtained both in the continuous time setup (\ref{1.1}) and in the discrete time setup given by the following recurrence relation
\begin{equation}\label{1.3}
X^\ve((n+1)\ve^2)=X^\ve(n\ve^2)+\ve B(X^\ve(n\ve^2),\,\xi(n))+\ve^2b(X^\ve(n\ve^2),\,\xi(n))
\end{equation}
where $0\leq n<[T/\ve^2]$ and $\xi(n),\, n\geq 0$ is a stationary sequence of random
vectors. We will study (\ref{1.3}) for $X_N=X^{1/\sqrt N}$ as $N\to\infty$.
The above results can be viewed as a substantial generalization of the functional central limit theorem since when $B(x,\zeta)$ does not depend on $x$ and $b\equiv 0$ the process $X^\ve$ weakly converges to the Brownian motion (with a covariance matrix).

Motivated by strong invariance principle results for sums (see, for instance, \cite{BP}, \cite{KP}, \cite{DP} and more recent
\cite{MN} and \cite{Go}) the second author obtained in \cite{Ki20} and \cite{Ki21} certain results on strong $L^p$ diffusion
 approximations of solutions of (\ref{1.1}) and (\ref{1.3}) under the condition (\ref{1.2}). In this paper we are interested
 in strong diffusion approximation results in a more traditional form saying that $X^\ve$ (or $X_N$) can be redefined on
 a richer probability space where there exists a diffusion process $\Xi^\ve$ (or $\Xi_N$) such that $\ve^{-\del}\sup_{0\leq t
 \leq T}|X^\ve(t)-\Xi^\ve(t)|$ (or $N^\del\max_{0\leq n\leq TN}|X_N(n/N)-\Xi_N(n/N)|$) remains bounded almost surely (a.s.) for
all $\ve> 0$ (or $N\geq 1$) where $\del>0$ does not depend on $\ve$ or $N$. All papers on almost sure approximations (strong
 invariance principles) dealt before with sums (or integrals in time) of random variables (or vectors), and so the limiting
  process was always a Brownian motion while almost sure diffusion approximation results do not seem to appear before in the
   literature. These results do not follow from \cite{Ki20} and \cite{Ki21} and, on the other hand, our methods do not yield
   moment estimates from these papers.

 Our methods rely on the rough paths theory \cite{Lyo}, as exposed in \cite{FV,FH}, and in order to adapt the equations (\ref{1.1}) and (\ref{1.3}) to this setup
 we consider (as in \cite{KM, CFKMZorg}, but see Remark \ref{rem2.7}) a more restricted situation assuming $B(x,\xi(\cdot))=\sig(x)\xi(\cdot)$ where $\xi(k),\, k\geq 0$ is a
 $\phi$-mixing sequence of random vectors, $\sig(x)$ is a smooth  matrix function
 and $b(x,\xi(\cdot))=b(x)$ does not depend on the second variable. We obtain, first, strong invariance principles for sums
 $S_{N}(t)=N^{-1/2}\sum_{0\leq k< [Nt]}\xi(k)$ and iterated sums $\bbS^{ij}_{N}(t)=N^{-1}\sum_{0\leq k<l<[Nt]}\xi_i(k)\xi_j(l)$
 in $p$-variation rough path sense, 
  which is a stronger result than the standard strong invariance principle,
 and then, relying on a quantitative form of the local {Lipschitz} property of the It\^ o-Lyons map for c\`adl\`ag rough differential equations \cite{FZ},
 in conjunction with a law of iterated logarithm type growth control for Brownian rough paths, we obtain our estimates for strong diffusion approximations of processes $X^\ve$ and $X_N$. This is in contrast to \cite{KM,CFKMZorg} and subsequent works which ``only'' rely on continuity of the It\^ o-Lyons map.  Similar results will be
 derived here in the continuous time setup (\ref{1.1}) with $B(x,\xi(\cdot))=\sig(x)\xi(\cdot)$ where $\xi(t),\, t\geq 0$ is a
 vector stochastic process constructed by a suspension procedure over a $\phi$-mixing discrete time process. Again, we obtain
 first strong invariance principles for integrals $\ve\int_0^{t\ve^{-2}}\xi(s)ds$ and iterated integrals
 $\ve^2\int_0^{t\ve^{-2}}\xi_j(s)ds\int_0^s\xi_i(u)du$ and then rely on the rough paths machinery.

  In fact, rough paths theory
 yields an extension to almost sure invariance principles for iterated sums and integrals from second to any order and
  as a byproduct of these results for weakly dependent random vectors we obtain for these objects corresponding
 laws of iterated logarithm. Almost sure diffusion approximations, strong invariance principles for iterated sums and laws
 of iterated logarithms for them never appeared before in the literature. We also note the current interest of iterated integrals and their discrete counterparts from a data science / time-series perspective \cite{BKPASL, DEFT, CGGOT}.


 The structure of this paper is as follows. In the next section we describe our precise setup and main results. In Sections
 \ref{roughsec3} and \ref{roughsec4} we prove the strong invariance principles in appropriate variational norms for sums
 and iterated sums, respectively. In Section \ref{roughsec5} we obtain the strong invariance principles in the continuous
 time setup for integrals and iterated integrals. In Section 6 we provide a brief introduction
 to rough paths theory and show how together with the above strong invariance principles it yields our strong diffusion
 approximations results.  In Section 7 we obtain extensions to strong invariance principles for iterated sums and integrals
 of any order which yields also laws of iterated logarithm for these objects.

\subsection*{Acknowledgement} PKF's research is supported by DFG Excellence Cluster MATH+ through Projekt AA4-2 and a MATH+ Distinguished Fellowship.

\section{Preliminaries and main results}\label{roughsec2}\setcounter{equation}{0}
\subsection{Discrete time case}
We start with the discrete time setup which consists of a complete probability space
$(\Om,\cF,P)$, a stationary sequence of  $e$-dimensional random vectors $\xi(n)$, $-\infty<n<\infty$ and
a two parameter family of countably generated $\sig$-algebras
$\cF_{m,n}\subset\cF,\,-\infty\leq m\leq n\leq\infty$ such that
$\cF_{mn}\subset\cF_{m'n'}\subset\cF$ if $m'\leq m\leq n
\leq n'$ where $\cF_{m\infty}=\cup_{n:\, n\geq m}\cF_{mn}$ and
$\cF_{-\infty n}=\cup_{m:\, m\leq n}\cF_{mn}$.
We will measure the dependence between $\sig$-algebras $\cG$ and $\cH$ by the $\phi$-coefficient
 defined by
\begin{eqnarray}\label{2.1}
&\phi(\cG,\cH)=\sup\{\vert\frac {P(\Gam\cap\Del)}{P(\Gam)}-P(\Del)\vert:\, P(\Gam)\ne
0,\,\Gam\in\cG,\,\Del\in\cH\}\\
&=\frac 12\sup\{\| E(g|\cG)-Eg\|_\infty:\, g\,\,\mbox{is $\cH$-measurable and }\,\|g\|_\infty=1\}\nonumber
\end{eqnarray}
(see \cite{Bra}) where $\|\cdot\|_\infty$ is the $L^\infty$-norm. For each $n\geq 0$ we set also
\begin{equation}\label{2.2}
\phi(n)=\sup_m\phi(\cF_{-\infty,m},\cF_{m+n,\infty}).
\end{equation}
If $\phi(n)\to 0$ as $n\to\infty$ then the probability measure $P$ is called $\phi$-mixing with respect to the family $\{\cF_{mn}\}$. Unlike \cite{Ki20}, in order
to ensure more applicability of our results to dynamical systems, we do not assume that
$\xi(n)$ is $\cF_{nn}$-measurable and instead we will work with the approximation
 coefficient
\begin{equation}\label{2.3}
\rho(n)=\sup_m\|\xi(m)-E(\xi(m)|\cF_{m-n,m+n})\|_\infty.
\end{equation}
To save notations we will still write $\cF_{mn}$, $\phi(n)$ and $\rho(n)$ for $\cF_{[m][n]}$, $\phi([n])$ and $\rho([n])$, respectively, if $m$ and $n$ are not integers, where $[\cdot]$ denotes the integral part.

We will deal with the recurrence relation
\begin{equation}\label{2.4}
X_N(n+1/N)=X_N(n/N)+\frac 1{\sqrt N}\sig(X_N(n/N))\xi(n)+\frac 1Nb(X_N(n/N))
\end{equation}
and this definition is extended to all $t\in[0,T]$ by setting $X_N(t)=X_N(n/N)$ whenever
$n/N\leq t<(n+1)/N$. We will assume that
\begin{equation}\label{2.5}
E\xi(0)=0
\end{equation}
and that $\sig$ is a $d \times e$ matrix function and $b$ is a $d$-dimensional vector function both defined on $\bbR^d$.
To avoid excessive technicalities these coefficients
and the process $\xi$ are supposed to satisfy the following uniform bounds
\begin{equation}\label{2.6}
\|\sig\|_{C^3},\,\|b\|_{C^3}\leq L
\end{equation}
and
\begin{equation}\label{2.7}
\|\xi(0)\|_\infty\leq L
\end{equation}
for some $L\geq 1$, where $\|\cdot\|_{C^3}$ is a matrix or a vector function $C^3$ norm and $\|\cdot\|_\infty$ is the $L^\infty$ norm.

Introduce the $e\times e$ matrix $\vs=(\vs_{ij})$ where
\begin{equation}\label{2.8}
\vs_{ij}=\lim_{k\to\infty}\frac 1k\sum_{m=0}^k\sum_{n=0}^k\vs_{ij}(n-m),\,\,
\mbox{and}\,\, \vs_{il}(n-m)=E(\xi_i(m)\xi_l(n))
\end{equation}
taking into account that the limit above will exist under conditions of our theorem below (see
(\ref{4.21})--(\ref{4.22}) in Section \ref{roughsec4} below). Define also
\begin{equation}\label{2.9}
c_i(x)=\sum_{j,l=1}^e\sum_{k=1}^d\frac {\partial\sig_{ij}(x)}{\partial x_k}\hat\vs_{lj}\sig_{kl}(x),\, i=1,...,d
\end{equation}
where
\begin{equation}\label{2.10}
\hat\vs_{jl}=\lim_{k\to\infty}\frac 1k\sum_{n=0}^k\sum_{m=-k}^{n-1}\vs_{jl}(n-m)=\sum_{n=1}^\infty E(\xi_j(0)\xi_l(n))
\end{equation}
and the latter limit exists under our conditions (which follows from (\ref{4.21})--(\ref{4.22}) below).
Let $\Xi$ be the unique solution of the stochastic differential equation
\begin{equation}\label{2.11}
d\Xi(t)=\sig(\Xi(t))dW(t)+(b(\Xi(t))+c(\Xi(t)))dt
\end{equation}
where $W$ is the $e$-dimensional Brownian motion with the covariance matrix $\vs$ (at the time 1). In what follows we will write
$A(u)=O(\al(u))$ a.s. for a family of random variables $A(u),\, u\in U\subset\bbR$ and a nonrandom sequence $\al(u),\,
u\in U\subset\bbR$ if $A(u)/\al(u)$ is bounded for all $u\in U$ by an almost surely finite random variable.

\begin{theorem}\label{thm2.1} Suppose that $X_N$ is defined by (\ref{2.4}), (\ref{2.5})--(\ref{2.7}) hold true and assume that
\begin{equation}\label{2.12}
\sup_{n\geq 0}n^4(\phi(n)+\rho(n))<\infty.
\end{equation}
Then the stationary sequence of random vectors
 $\xi(n),\,-\infty<n<\infty$ can be redefined preserving its distributions on a sufficiently rich probability space which contains also a $e$-dimensional Brownian motion $\cW$ with the covariance matrix $\vs$ (at the time 1) so that for each $T>0$ and $\Xi_N$
 solving (\ref{2.11}) with $W(t)=W_N(t)=N^{-1/2}\cW(Nt),\, 0\leq t\leq T$,
\begin{equation}\label{2.13}
\sup_{0\leq t\leq T}|X_N(t)-\Xi_N(t)|=O(N^{-\del})\quad\mbox{a.s. for all}\quad N\geq 1
\end{equation}
 where $X_N(0)=\Xi_N(0)$ and $\del>0$ does not depend on $N\geq 1$.
\end{theorem}

We stress that a sufficiently rich probability space has here a quite precise meaning that one can define on it a sequence of 
independent uniformly didtributed on $[0.1]$ random variables which are independent of the sequence $\xi(n),\, n\in\bbZ$ (see Theorem \ref{thm3.8} below).

Under certain additional conditions, which are always satisfied when $d=1$ and $\sig$ is bounded away from zero, the problem
can be reduced to the strong invariance principle for sums (cf. \cite{Ki21}) which is well known and this would imply then
Theorem \ref{thm2.1}. Nevertheless, in the multidimensional case such reduction requires substantial additional assumptions
 on $\sig$ (see \cite{Ki21}).
For the proof of Theorem \ref{thm2.1} we will follow, first, the strategy appeared more than forty years ago in \cite{BP} leading 
to the strong (almost sure) invariance principle in the supremum norm. Moreover, we extend this to  
 estimates of the almost sure invariance principles in a $p$-variation norm both for sums $S_N$ and iterated sums $\bbS_N$ given by
\begin{equation}\label{2.14}
S_{N}(t)=N^{-1/2}\sum_{0\leq k< [Nt]}\xi(k)\quad\mbox{and}\quad \bbS^{ij}_{N}(t)=N^{-1}\sum_{0\leq k<l<[Nt]}\xi_i(k)\xi_j(l),
\end{equation}
where $S_N(0)=\bbS^{ij}_N(0)=0$, and then rely on the local Lipschitz property of the It\^ o-Lyons map for rough differential equations (see, for instance, \cite{FH}) together with the growth estimates of the corresponding Lipschitz constant
(see Theorem 6.1 in Section 6). Observe also that by (\ref{2.10}),
\[
\lim_{N\to\infty}E\bbS^{ij}_{N}(t)=t\hat\vs_{ij}.
\]

As it is customary in the rough paths theory we use slightly different definitions for processes denoted by usual letters and
the ones denoted by blackboard letters. Namely, for a process $Q(t),\, t\geq 0$ we write $Q(s,t)=Q(t)-Q(s)$ when $t\geq s$.
On the other hand, if $\bbQ(t)=\sum_{0\leq k<l<[tN]}\eta(k)\zeta(l)$ or $\bbQ(t)=\int_0^t\eta(u)d\zeta(u)$ (where $\eta$ and
$\zeta$ are one-dimensional processes and when the latter integral makes sense) then
\[
\bbQ(s,t)=\sum_{[sN]\leq k<l<[tN]}\eta(k)\zeta(l)\,\,\mbox{and}\,\,\bbQ(s,t)=\int_s^t(\eta(u)-\eta(s))d\zeta(u),
\]
respectively (see Section \ref{sec:RP} for some recalls). These notations affect the following definition of $p$-variation norms. For any path $\gam(t),\, t\in[0,T]$ in a Euclidean space having left and right limits and $p\geq 1$ the $p$-variation norm of
$\gam$ on  an interval $[U,V],\, U<V$ is given by
 \begin{equation}\label{2.15}
 \|\gam\|_{p,[U,V]}=\big(\sup_\cP\sum_{[s,t]\in\cP}|\gam(s,t)|^p\big)^{1/p}
 \end{equation}
 where the supremum is taken over all partitions $\cP=\{ U=t_0<t_1<...<t_n=V\}$ of $[U,V]$ and the sum is taken over
 the corresponding subintervals $[t_i,t_{i+1}],\, i=0,1,...,n-1$ of the partition while $\gam(s,t)$ is taken according
 to the definitions above depending on the process under consideration. The main step in the proof of Theorem \ref{thm2.1} is the following result.
 \begin{theorem}\label{thm2.2}
 Suppose that (\ref{2.12}) holds true. Then the stationary sequence of random vectors
 $\xi(n),\,-\infty<n<\infty$ can be redefined preserving its distributions on a sufficiently rich probability space which contains also a $e$-dimensional Brownian motion $\cW$ with the covariance matrix $\vs$ (at the time 1) so that for each $T>0$ and
 $W_N(t)=N^{-1/2}\cW(Nt),\, 0\leq t\leq T$,
 \begin{equation}\label{2.16}
 \| S_N-W_N\|_{p,[0,T]}=O(N^{-\del})\quad\mbox{a.s.}
 \end{equation}
 and
  \begin{equation}\label{2.17}
 \max_{1\leq i,j\leq d}\|\bbS^{ij}_N-\bbW^{ij}_N\|_{\frac p2,[0,T]}=O(N^{-\del})\quad\mbox{a.s.}
 \end{equation}
 where $p\in(2,3)$, $\del>0$ does not depend on $N\geq 1$ (and may be different from $\del$ in (\ref{2.13})), $S_N,\,\bbS_N$
  are given  by (\ref{2.14}) and
 \begin{equation}\label{2.18}
 \bbW^{ij}_N(t)=\int_0^tW^i_N(s)dW^j_N(s)+t\sum_{l=1}^\infty E(\xi_i(0)\xi_j(l)).
 \end{equation}
\end{theorem}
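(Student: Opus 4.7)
The overall strategy is to reduce the $p$-variation approximation to the combination of a classical almost sure invariance principle (ASIP) in sup-norm plus a quantitative bound on the $q$-variation of $S_N$ and $W_N$ for some $q\in(2,p)$, and then to interpolate. The key interpolation inequality is the elementary bound
\begin{equation*}
\|f\|_{p,[0,T]}\le \big(2\|f\|_{\infty,[0,T]}\big)^{1-q/p}\|f\|_{q,[0,T]}^{q/p},
\end{equation*}
valid for any $1\le q<p$ and any càdlàg path $f$. Thus, once we know that the error $f=S_N-W_N$ is small in sup norm and that both $S_N$ and $W_N$ have $q$-variation bounded by at most a power of $\log N$, we immediately obtain the desired $p$-variation decay with a smaller but still positive exponent $\delta$.

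For the first estimate \eqref{2.16}, I would first invoke a Kuelbs--Philipp / Berkes--Philipp type ASIP for $\phi$-mixing sequences under condition \eqref{2.12}: it supplies a universal Brownian motion $\mathcal{W}$ with covariance $\vs$ and a redefinition on a rich space such that $|\sum_{k<n}\xi(k)-\mathcal W(n)|=O(n^{1/2-\eta})$ a.s. for some $\eta>0$. Rescaling by $W_N(t)=N^{-1/2}\mathcal{W}(Nt)$ yields $\sup_{t\le T}|S_N(t)-W_N(t)|=O(N^{-\eta})$ a.s. To obtain uniform-in-$N$ control of $\|S_N\|_{q,[0,T]}$ and $\|W_N\|_{q,[0,T]}$ with $q\in(2,p)$, I would use a Rosenthal-type moment inequality for $\phi$-mixing sums, combined with a Besov-variation embedding (Kolmogorov-type chaining), giving at worst a random polylogarithmic constant; the Brownian piece is handled by the classical law of iterated logarithm for $q$-variation of $\mathcal{W}$, together with the explicit scaling. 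The interpolation inequality then gives \eqref{2.16} with exponent $\delta=\eta(1-q/p)-o(1)$.

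The second estimate \eqref{2.17} is the heart of the matter. Here I would first center the iterated sum by writing $\widetilde{\mathbb{S}}_N^{ij}(t)=\mathbb{S}_N^{ij}(t)-E\mathbb{S}_N^{ij}(t)$ and note that under \eqref{2.12} the expectation is $t\kappa_{ij}$ up to an error $O(N^{-1})$, where $\kappa_{ij}=\sum_{l\ge 1}E(\xi_i(0)\xi_j(l))$ is the drift correction appearing in \eqref{2.18}. The centered iterated sum admits a martingale approximation along the filtration $\mathcal F_{-\infty,l}$: write $\xi_j(l)\sum_{k<l}\xi_i(k)$ as a martingale increment plus a coboundary, with the mixing/approximation coefficients controlling the tails. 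After applying the ASIP/embedding for martingales to $\widetilde{\mathbb{S}}_N^{ij}$, one obtains a sup-norm comparison with a process defined on the same space as $\mathcal W$. To identify the limit as $\int W_N^i\,dW_N^j+t\kappa_{ij}$, I would use the symmetry relation $\mathbb{S}_N^{ij}(t)+\mathbb{S}_N^{ji}(t)=S_N^i(t)S_N^j(t)-N^{-1}\sum_{k<[Nt]}\xi_i(k)\xi_j(k)$, whose diagonal sum is handled by an ergodic-theorem rate from \eqref{2.12}, and then argue via Step~1 that $S_N^iS_N^j\to W_N^iW_N^j$ in sup-norm with the required rate, so that after symmetrization the difference of antisymmetric parts is $O(N^{-\eta''})$. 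Finally, the same interpolation inequality, now applied with $p/2$ in place of $p$ and using moment bounds on the $q/2$-variation of $\mathbb{S}_N$ (a two-parameter Rosenthal/chaining argument) together with the LIL for the Lévy area of $\mathcal W$, upgrades the sup-norm approximation to the required $p/2$-variation bound.

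The main obstacle is the second step: producing an almost-sure rate of approximation of the centered iterated sum by the Lévy area of $\mathcal W$. Ordinary ASIPs for sums give sup-norm rates only for the one-dimensional partial-sum process, but here the object is bilinear and its variance already reflects second-order mixing information. Controlling the cross term $\sum_{k<l}\xi_i(k)(W_N^j(l/N)-S_N^j(l/N))$ that appears when one tries to substitute $S_N$ by $W_N$ inside the iterated sum, and simultaneously matching the subtle drift $t\kappa_{ij}$, is where the $\phi$-mixing assumption \eqref{2.12} with exponent $3$ is really used, via a careful block decomposition and an application of a martingale ASIP. The $p$-variation upgrade itself is then almost mechanical via the interpolation inequality once one has the sup-norm ASIP and the two-parameter moment bounds.
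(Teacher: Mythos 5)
For the level-one estimate \eqref{2.16} your route is sound and genuinely different in one respect from what the paper does: you propose a single interpolation inequality $\|f\|_{p}\le(2\|f\|_\infty)^{1-q/p}\|f\|_q^{q/p}$ applied globally to $f=S_N-W_N$, feeding in the sup-norm ASIP together with polylogarithmic a.s.\ control of $\|S_N\|_q,\|W_N\|_q$ for some $q\in(2,p)$. The paper instead splits an arbitrary partition into ``long'' blocks (length $>N^{\alpha-1}$), where the sup-norm error dominates, and ``short'' blocks, where H\"older-type moment bounds on $S_N$ and $W_N$ separately (inequalities (3.27)--(3.28), (3.33)--(3.34)) are used, and sums the two contributions. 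The two arguments exploit the same two inputs and give the same kind of $\delta>0$; yours is a bit more conceptual, the paper's is a bit more elementary and also sets up the machinery it reuses for the iterated sum. Note the paper also has to separately control $W_N$ versus its grid discretisation $\hat W_N$ (Lemma~3.10); your interpolation would need the same or an equivalent care at the grid points.

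For the level-two estimate \eqref{2.17} there is a genuine gap. A martingale--coboundary decomposition of $\bbS_N^{ij}$ followed by a martingale ASIP produces a Brownian motion coupled to the (centered, antisymmetrised) iterated sum, but you need much more: you need the specific coupling to $\int_0^t W_N^i\,dW_N^j$ for the \emph{same} $W_N$ that was already constructed in Step 1. The L\'evy area is not a Brownian motion and is joint in law with $(W_N^i,W_N^j)$; a martingale embedding gives you \emph{a} Brownian process, not this particular stochastic integral adapted to $\cW$. The symmetry relation $S_N^iS_N^j=\bbS_N^{ij}+\bbS_N^{ji}+N^{-1}\sum_k\xi_i(k)\xi_j(k)$ handles the symmetric part via the level-one ASIP, but does nothing for the antisymmetric (L\'evy-area) part, which is where the martingale ASIP would have to carry the burden and where the coupling mismatch arises. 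The paper's resolution is to \emph{bypass} any independent ASIP at level two: it coarsens the iterated sum over blocks of size $m_N=N^{1-\kappa}$ into $\bbU_N^{ij}(t)=\sum_{l}\bigl(S_N^j(lm_N/N)-S_N^j((l-1)m_N/N)\bigr)S_N^i((l-1)m_N/N)$, a function of the \emph{level-one} partial sums evaluated at grid points, and then compares it to the analogous Riemann sum $\bbV_N^{ij}$ built from $W_N$; the error $\bbU_N^{ij}-\bbV_N^{ij}$ is then controlled \emph{only} by the level-one sup-norm ASIP, so the coupling to the right $W_N$ comes for free. You briefly gesture at substituting $S_N$ by $W_N$ inside the iterated sum and at a block decomposition, which is indeed the right germ of an idea, but you present it as an obstacle rather than as the mechanism, and your proposed engine (a martingale ASIP for the bilinear object) would not by itself produce the required coupling with $W_N$.
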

Observe that the estimate (\ref{2.16}) obtained in the $p$-variation norm is a stronger result than the standard strong
invariance principle considered in other papers such as \cite{BP}, \cite{KP}, \cite{DP}, \cite{MN} and \cite{Go}.
The strong invariance principle for $S_N$, i.e. an a.s. eventual estimate of $|S_N-W_N|$ in the supremum norm, is
essentially well known but in \cite{BP} and \cite{KP} it is proved assuming that the $\sig$-algebras $\cF_{mn}$ are
 generated by the random vectors $\xi(m),...,\xi(n)$ themselves (which restricts applications) while in \cite{MN} and
 \cite{Go} this is proved also under different from ours conditions. Thus, we cannot rely here on a direct reference and will
 provide the proof of this part, as well. In fact, we will need not only this result itself but also the specific construction
 of the Brownian motions $W_N$ emerging there which is used in the proof of (\ref{2.17}).
 The strong invariance principle for iterated sums in the form (\ref{2.17}) does not seem to appear in the literature before.
 Moreover, we will see in Section 7 that the strong invariance principle for second order iterated sums obtained in Theorem
 \ref{thm2.2} implies strong invariance principles for multiple iterated sums of any order which, in turn, yields laws of
 iterated logarithm for them relying on the laws of iterated logarithm for multiple stochastic integrals
 $\int_0^td\cW^{i_\ell}(t_\ell)\int_0^{t_\ell}...\int_0^{t_2}d\cW^{i_1}(t_1)$  (see \cite{Bal}).

  \begin{corollary}\label{cor2.3}
  Under the conditions of Theorem \ref{thm2.2} for all $\ell\geq 1$ and $0\leq i_1,...,i_\ell\leq d$,
   \begin{equation*}
  P\big\{\limsup_{t\to\infty}\frac {\sum_{0\leq k_1<k_2<...<k_\ell<t}\xi_{i_1}(k_1)\xi_{i_2}(k_2)\cdots\xi_{i_\ell}(k_\ell)}
  {(2t\log\log t)^{\ell/2}}=d_{i_1,i_2,...,i_\ell}\big\}=1
  \end{equation*}
  where $1\le i_1\le i_2\leq ...\leq i_\ell\le e$ and the constant $d_{i_1,i_2,...,i_\ell}$ can be computed from
  the formula given in Corollary \ref{cor:BaldiGen}.
  \end{corollary}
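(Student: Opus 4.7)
The plan is to combine two ingredients: the extension of the level-two strong invariance principle (\ref{2.17}) to multiple iterated sums of arbitrary order $\ell$, carried out in Section \ref{roughsec7} via the Lyons extension of rough paths, and the law of iterated logarithm for iterated Wiener integrals due to Baldi \cite{Bal}.

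First, I would invoke the $\ell$-th level extension of Theorem \ref{thm2.2}. Section \ref{roughsec7} lifts the level-$2$ $p/2$-variation bound (\ref{2.17}) to a level-$\ell$ $p/\ell$-variation bound between the iterated sum and the iterated rough integral built from the universal Brownian motion $\cW$. Writing $W_N(t)=N^{-1/2}\cW(Nt)$ and applying this bound at the endpoint $t=1$, combined with the Brownian scaling
\[
\bbW_N^{(i_1,\ldots,i_\ell)}(1) \;=\; N^{-\ell/2}\bigl(\cI^{(i_1,\ldots,i_\ell)}(N) \,+\, P_\ell(N)\bigr),
\]
where
\[
\cI^{(i_1,\ldots,i_\ell)}(t)\;:=\;\int_0^t d\cW^{i_\ell}(t_\ell)\int_0^{t_\ell}\cdots\int_0^{t_2}d\cW^{i_1}(t_1)
\]
is the iterated Wiener integral appearing in \cite{Bal} and $P_\ell(t)$ is a polynomial-in-$t$ correction of degree strictly less than $\ell/2$ arising from the covariances of $\xi$ (extending the linear drift term in (\ref{2.18})), yields
\[
\sum_{0\le k_1<\cdots<k_\ell<N}\xi_{i_1}(k_1)\cdots\xi_{i_\ell}(k_\ell)\;=\;\cI^{(i_1,\ldots,i_\ell)}(N) \,+\, P_\ell(N) \,+\, O(N^{\ell/2-\delta_\ell})\quad\text{a.s.}
\]

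Second, dividing by $(2N\log\log N)^{\ell/2}$, both the invariance-principle error $N^{\ell/2-\delta_\ell}$ and the lower-order polynomial correction $P_\ell(N)$ are $o\bigl((2N\log\log N)^{\ell/2}\bigr)$ and therefore do not contribute to the limit superior. Baldi's law of iterated logarithm for multiple Wiener integrals \cite{Bal} gives
\[
\limsup_{t\to\infty}\frac{\cI^{(i_1,\ldots,i_\ell)}(t)}{(2t\log\log t)^{\ell/2}}\;=\;d_{i_1,\ldots,i_\ell}\quad\text{a.s.},
\]
which transfers to the iterated sums along the integer sequence $t=N$. Passage from integer $N$ to continuous $t\to\infty$ is harmless since the numerator is constant on $[N,N+1)$ while $(2t\log\log t)^{\ell/2}$ is continuous and varies by a factor $1+O(1/t)$ on this interval, so the two limit superiors agree.

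The main obstacle is Step one: identifying the polynomial corrections $P_\ell$ explicitly enough to see that they are of degree strictly less than $\ell/2$, and propagating the quantitative level-two estimate (\ref{2.17}) through the Lyons extension to all levels with a rate that still beats the iterated-logarithm scale. Once Section \ref{roughsec7} has produced the extended invariance principle together with the combinatorial identification of the correction polynomial, the corollary reduces to rough-path scaling plus an appeal to Baldi's theorem, with the constant $d_{i_1,\ldots,i_\ell}$ expressible in terms of Baldi's constant modulo the contribution of the covariances of $\xi$ recorded in $P_\ell$.
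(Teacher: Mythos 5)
Your overall plan---extend the level-two invariance principle to level $\ell$ via the Lyons extension, then appeal to an iterated-logarithm law for the limiting Brownian object---is the same route the paper takes. What goes wrong is the identification of the correction term. You write the level-$\ell$ component of $\mathbf{\overline{W}}_N$ as a standard iterated Wiener integral $\cI^{(i_1,\ldots,i_\ell)}(N)$ plus a deterministic ``polynomial-in-$t$ correction $P_\ell(N)$ of degree strictly less than $\ell/2$.'' That is not what the Lyons extension of a Brownian rough path with parameters $(\Sigma,\Gamma)$ looks like. As recorded in Example \ref{exam:gBRP}, the correction is a linear combination of \emph{lower-order iterated It\^o integrals of the time-space Brownian motion} $(W_0,W)$ with $W_0(t)=t$, obtained by contracting neighbouring pairs of letters and weighted by products of entries of $\Gamma$. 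Already at $\ell=3$ a single contraction produces terms like $\int_s^t (r-s)\,dW^{i}(r)$, which is a genuine stochastic integral, not a polynomial. Even at $\ell=2$ the correction $\Gamma_{ij}(t-s)$ has degree exactly $\ell/2$, not strictly less. Your fallback claim that the correction is $o((2N\log\log N)^{\ell/2})$ happens to be true (those lower-order integrals grow like $(N\log\log N)^{\ell'/2}$ with $\ell'<\ell$), but the reasoning offered does not establish it.

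The paper sidesteps this combinatorics entirely: Proposition \ref{prop:LILBRP} proves a functional Strassen law directly for the Lyons extension $\mathbf{\overline{W}}^\ell$ of the \emph{whole} $(\Sigma,\Gamma)$-Brownian rough path, via a Schilder-type large deviation principle in $p/\ell$-variation rough path topology combined with the contraction principle; the good rate function $\tfrac12\|H\|^2_\cH$ depends on $\Sigma$ but not on $\Gamma$, which is the clean way to see that the drift correction cannot move the limit set. Combined with Theorem \ref{thm:asIVPell}, this gives the functional LIL for the iterated sums $\mathbf{\bar\xi}^\ell$, and the scalar statement of Corollary \ref{cor2.3} follows by the tensor contraction argument of Corollary \ref{cor:BaldiGen}: the constant $d_{i_1,\ldots,i_\ell}$ is $\sup\{\langle e_{i_1}\otimes\cdots\otimes e_{i_\ell},\int_{\Delta_{0,1}^\ell}(dH)^{\otimes\ell}\rangle : \|H\|_{\cH}\le1\}$, with no separate ``modulo the contribution of $P_\ell$'' adjustment. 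You should also note that the continuous-parameter limsup over $t$ reduces to the integer one because the numerator is constant on $[N,N+1)$---that passage in your proposal is fine.
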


Important classes of processes satisfying our conditions come from
dynamical systems. Let $F$ be a $C^2$ Axiom A diffeomorphism (in
particular, Anosov) in a neighborhood of an attractor or let $F$ be
an expanding $C^2$ endomorphism of a Riemannian manifold $\Om$ (see
\cite{Bow}), $g$ be either a H\" older continuous vector function or a
vector function which is constant on elements of a Markov partition and let $\xi(n)=
\xi(n,\om)=g(F^n\om)$. Here the probability space is $(\Om,\cB,P)$ where $P$ is a Gibbs
 invariant measure corresponding to some H\"older continuous function and $\cB$ is the Borel $\sig$-field.
 In this setup the assumption (\ref{2.6}) on boundedness of $\xi$ turns out to be quite natural.
  Let $\zeta$ be a finite Markov partition for $F$ then we can take $\cF_{kl}$
 to be the finite $\sig$-algebra generated by the partition $\cap_{i=k}^lF^i\zeta$.
 In fact, we can take here not only H\" older continuous $g$'s but also indicators
of sets from $\cF_{kl}$. The conditions of Theorems \ref{thm2.1} and \ref{thm2.2} allow all such functions
since the dependence of H\" older continuous functions on $m$-tails, i.e. on events measurable
with respect to $\cF_{-\infty,-m}$ or $\cF_{m,\infty}$, decays exponentially fast in $m$ and
the condition (\ref{2.12}) is even weaker than that. A related class of dynamical systems
corresponds to $F$ being a topologically mixing subshift of finite type which means that $F$
is the left shift on a subspace $\Om$ of the space of one (or two) sided
sequences $\om=(\om_i,\, i\geq 0), \om_i=1,...,l_0$ such that $\om\in\Om$
if $\pi_{\om_i\om_{i+1}}=1$ for all $i\geq 0$ where $\Pi=(\pi_{ij})$
is an $l_0\times l_0$ matrix with $0$ and $1$ entries and such that $\Pi^n$
for some $n$ is a matrix with positive entries. Again, we have to take in this
case $g$ to be H\" older continuous bounded functions on the sequence space above,
 $P$ to be a Gibbs invariant measure corresponding to some H\" older continuous function and to define
$\cF_{kl}$ as the finite $\sig$-algebra generated by cylinder sets
with fixed coordinates having numbers from $k$ to $l$. The
exponentially fast $\psi$-mixing, which is stronger than $\phi$-mixing
required here, is well known in the above cases (see \cite{Bow}). Among other
dynamical systems with exponentially fast $\psi$-mixing we can mention also the Gauss map
$Fx=\{1/x\}$ (where $\{\cdot\}$ denotes the fractional part) of the
unit interval with respect to the Gauss measure $G$ and more general transformations generated
by $f$-expansions (see \cite{Hei}). Gibbs-Markov maps which are known to be exponentially fast
$\phi$-mixing (see, for instance, \cite{MN}) can be also taken as $F$ in Theorem \ref{thm2.1}
with $\xi(n)=g\circ F^n$ as above.

\subsection{Continuous time case}

Here we start with a complete probability space $(\Om,\cF,P)$, a
$P$-preserving invertible transformation $\vt:\,\Om\to\Om$ and
a two parameter family of countably generated $\sig$-algebras
$\cF_{m,n}\subset\cF,\,-\infty\leq m\leq n\leq\infty$ such that
$\cF_{mn}\subset\cF_{m'n'}\subset\cF$ if $m'\leq m\leq n
\leq n'$ where $\cF_{m\infty}=\cup_{n:\, n\geq m}\cF_{mn}$ and
$\cF_{-\infty n}=\cup_{m:\, m\leq n}\cF_{mn}$. The setup includes
also a (roof or ceiling) function $\tau:\,\Om\to (0,\infty)$ such that
for some $\hat L>0$,
\begin{equation}\label{2.19}
\hat L^{-1}\leq\tau\leq\hat L.
\end{equation}
Next, we consider the probability space $(\hat\Om,\hat\cF,\hat P)$ such that $\hat\Om=\{\hat\om=
(\om,t):\,\om\in\Om,\, 0\leq t\leq\tau(\om)\},\, (\om,\tau(\om))=(\vt\om,0)\}$, $\hat\cF$ is the
restriction to $\hat\Om$ of $\cF\times\cB_{[0,\hat L]}$, where $\cB_{[0,\hat L]}$ is the Borel
$\sig$-algebra on $[0,\hat L]$ completed by the Lebesgue zero sets, and for any $\Gam\in\hat\cF$,
\[
\hat P(\Gam)=\bar\tau^{-1}\int\bbI_\Gam(\om,t)dtdP(\om)\,\,\mbox{where}\,\,\bar\tau=\int\tau dP=E\tau
\]
and $E$ denotes the expectation on the space $(\Om,\cF,P)$.

Finally, we introduce a vector valued stochastic process $\xi(t)=\xi(t,(\om,s))$, $-\infty<t<\infty,\, 0\leq
s\leq\tau(\om)$ on $\hat\Om$ satisfying
\begin{eqnarray*}
&\xi(t,(\om,s))=\xi(t+s,(\om,0))=\xi(0,(\om,t+s))\,\,\mbox{if}\,\, 0\leq t+s<\tau(\om)\,\,\mbox{and}\\
&\xi(t,(\om,s))=\xi(0,(\vt^k\om,u))\,\,\mbox{if}\,\, t+s=u+\sum_{j=0}^k\tau(\vt^j\om)\,\,\mbox{and}\,\,
0\leq u<\tau(\vt^k\om).
\end{eqnarray*}
This construction is called in dynamical systems a suspension and it is a standard fact that $\xi$ is a
stationary process on the probability space $(\hat\Om,\hat\cF,\hat P)$ and in what follows we will write
also $\xi(t,\om)$ for $\xi(t,(\om,0))$.

We will assume that $X^\ve(t)=X^\ve(t,\om)$ from (\ref{1.1}) considered as a process on $(\Om,\cF,P)$
solves the equation
\begin{equation}\label{2.20}
\frac {dX^\ve(t)}{dt}=\frac 1\ve \sig(X^\ve(t))\xi(t/\ve^2)+b(X^\ve(t)),\,\, t\in[0,T]
\end{equation}
where the matrix function $\sig$ and the process $\xi$ satisfy (\ref{2.6}). Set $\eta(\om)=\int_0^{\tau(\om)}\xi(s,\om)ds$ and
\begin{eqnarray}\label{2.21}
&\rho(n)=\sup_m\max\big(\|\tau\circ\vt^m-E(\tau\circ\vt^m|\cF_{m-n,m+n})\|_\infty,\\
&\| ess\sup_{0\leq s\leq\tau(\te^m\om)}|\xi(0,(\vt^m\om,s))-E(\xi(0,(\vt^m\om,s))|\cF_{m-n,m+n})|\|_\infty\big).\nonumber
\end{eqnarray}
Observe also that $\eta(k)=\eta\circ\vt^k$ is a stationary sequence of random vectors.

Next, we consider a diffusion process $\Xi$ solving the stochastic differential equation
\begin{equation}\label{2.11bis}
d\Xi(t)=\sig(\Xi(t))dW(t)+(\bar\tau b(\Xi(t))+c(\Xi(t)))dt,
\end{equation}
with $d$-dimensional Brownian motion $W$ having the covariance matrix $\vs=(\vs_{ij})$ at the time 1 given by
\begin{equation}\label{2.22}
\vs_{ij}=\lim_{n\to\infty}\frac 1n\sum_{k,l=0}^nE(\eta_i(k)\eta_j(l))
\end{equation}
and with
\begin{eqnarray}\label{2.23}
&
\quad c_{i}(x)=\sum_{j,l=1}^e\sum_{k=1}^d\frac {\partial\sig_{ij}(x)}{\partial x_k}\Big(\hat\vs_{lj}+
\int_0^{\tau(\om)}
 \xi_j(s,\om) ds
 \int_0^s
\xi_l(u,\om) du
\Big)\sig_{kl}(x) , \\
&\hat\vs_{ij}=\lim_{n\to\infty}\frac 1n\sum_{k=0}^n\sum_{l=-n}^{k-1}E(\eta_i(l)\eta_j(k))=\sum_{n=1}^\infty
E(\eta_i(0)\eta_j(n)).\nonumber
\end{eqnarray}
The limits in (\ref{2.22}) and (\ref{2.23}) exist in view of estimates similar to (\ref{4.21})--(\ref{4.22}) below.
Notice the difference in the definitions of $c(x)$ in (\ref{2.9}) and in (\ref{2.23}) which is due to the fact that $c(x)$
is defined here in terms of the process $\eta$ and not $\xi$. The following result is a continuous time version of
Theorem \ref{thm2.1} which can be viewed also as a substantial extension of \cite{DP}.

\begin{theorem}\label{thm2.4} Assume that $E\eta=E\int_0^\tau\xi(t)dt=0$ and (\ref{2.6}), (\ref{2.7}) and (\ref{2.12}) (for $\rho$ defined in (\ref{2.21})) hold true, as well.
Then the stationary vector process
 $\xi(t),\,-\infty<t<\infty$ can be redefined preserving its distributions on a sufficiently rich probability space which contains also a $e$-dimensional Brownian motion $\cW$ with the covariance matrix $\vs$ (at the time 1) so that for each $T>0$ and $\Xi^\ve$ solving (\ref{2.11bis}) with
 $W(t)=W^\ve(t)=\ve\cW(\ve^{-2}t),\, 0\leq t\leq T$,
\begin{equation}\label{2.24}
\sup_{0\leq t\leq T}|X^\ve(t)-\Xi^\ve(t/\bar\tau)|=O(\ve^\delta)\,\,\,\mbox{a.s.}
\end{equation}
where $X^\ve(0)=\Xi^\ve(0)$ and $\del>0$ does not depend on $\ve>0$.
\end{theorem}

We observe that if the stationary process $\xi$ on the probability space $(\hat\Om,\hat\cF,\hat P)$ would be sufficiently
fast mixing then the proof of Theorem \ref{thm2.4} could proceed essentially in the same way as in Theorem \ref{thm2.1} and,
in fact, the former could be derived from the latter by discretizing time. But, in general, this is not the case in applications
to dynamical systems no matter what speed of decay of the coefficients $\phi$ and $\rho$ on the base space $(\Om,\cF,P)$ is
 assumed, and so substantial additional work is required here.
The key step in the proof of Theorem \ref{thm2.4} is to obtain a continuous time version of Theorem \ref{thm2.2}
which is the following result proved in Section \ref{roughsec5}. After that we will rely on rough paths theory arguments of Section \ref{roughsec6}. Set
\[
V^\ve(t)=\ve\int_0^{t\bar\tau\ve^{-2}}\xi(s)ds,\,\,\,\bbV_{ij}^\ve(t)=\ve^2\int_0^{t\bar\tau\ve^{-2}}
\xi_j(s) ds \int_0^s\xi_i(u)du,\,\, i,j=1,...,d
\]
and observe that in view of estimates in Section \ref{subsec5.3}, more precisely \eqref{5.16},
\[
\lim_{\ve\to 0}E\bbV_{ij}^\ve(t)=t\sum_{l=1}^\infty E(\eta_i(0)\eta_j(l))+
 tE \Big( \int_0^{\tau(\om)}\xi_j(s,\om)ds\int_0^s\xi_i(u,\om)du \Big).
\]
\begin{theorem}\label{thm2.5}
Suppose that (\ref{2.12}) and (\ref{2.19}) hold true with $\rho$ defined in (\ref{2.21}).
 Then the stationary vector process
 $\xi(t),\,-\infty<t<\infty$ can be redefined preserving its distributions on a sufficiently rich probability space which contains also a $e$-dimensional Brownian motion $\cW$ with the covariance matrix $\vs$ at the time 1 given by (\ref{2.22}) so that for each $T>0$ and $W^\ve(t)=\ve\cW(\ve^{-2}t),\, 0\leq t\leq T$,
\begin{equation}\label{2.25}
\|V^\ve-W^\ve\|_{p,[0,T]}=O(\ve^\del)\quad\mbox{a.s.}
\end{equation}
and
\begin{equation}\label{2.26}
\|\bbV^\ve_{ij}-\bbW^\ve_{ij}\|_{p/2,[0,T]}=O(\ve^\del)\quad\mbox{a.s.}
\end{equation}
where $p\in(2,3)$, $\del>0$ does not depend on $\ve$, a.s. is taken simultaneously over $\ve\in(0,1)$ and
 \[
 \bbW_{ij}^\ve(t)=\int_0^tW_i^\ve(s)dW^\ve_j(s)+t\sum_{l=1}^\infty E(\eta_i(0)\eta_j(l))+
 tE \Big( \int_0^{\tau(\om)}\xi_j(s,\om)ds\int_0^s\xi_i(u,\om)du \Big).
 \]
\end{theorem}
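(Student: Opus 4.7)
The plan is to reduce Theorem \ref{thm2.5} to its discrete-time counterpart, Theorem \ref{thm2.2}, applied to the induced stationary sequence $\eta(k)=\eta\circ\vt^k$ with $\eta(\om)=\int_0^{\tau(\om)}\xi(s,\om)ds$, and then to handle the extra continuous-time errors arising from (i) the mismatch between the deterministic time $t\bar\tau\ve^{-2}$ and the random return time $T_n=\sum_{j<n}\tau\circ\vt^j$ at which the integral of $\xi$ reduces cleanly to a sum of $\eta(k)$'s, and (ii) the Riemann-integral contribution of second-order iterated integrals on a single return interval.

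First I would verify that $\eta$ inherits from $\xi$ and $\tau$ the bound $|\eta|\le\hat L L$, the mean zero property $E\eta=0$, and $\phi$-mixing/approximation coefficients satisfying (\ref{2.12}); here (\ref{2.19}) and the joint definition of $\rho$ in (\ref{2.21}) are what is needed to transfer these conditions from the base space to the discrete sequence. Applying Theorem \ref{thm2.2} to $\eta$ with $N=[\ve^{-2}]$ then furnishes a universal Brownian motion $\cW$ of covariance $\vs$ and rescaled copies $W^\ve(t)=\ve\cW(\ve^{-2}t)$ satisfying
\begin{equation*}
\|S^\eta_N-W^\ve\|_{p,[0,T]}+\max_{i,j}\|\bbS^{\eta,ij}_N-\bbW^{\eta,ij}_N\|_{p/2,[0,T]}=O(\ve^{2\del})\quad\text{a.s.},
\end{equation*}
with $\bbW^{\eta,ij}_N(t)=\int_0^tW^\ve_i\,dW^\ve_j+t\sum_{l=1}^\infty E(\eta_i(0)\eta_j(l))$, which matches the first two terms of the target $\bbW^\ve_{ij}$.

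Next I would exploit the cocycle identities $\int_0^{T_n}\xi(s)ds=\sum_{k<n}\eta(k)$ and
\begin{equation*}
\int_0^{T_n}\xi_j(s)\int_0^s\xi_i(u)du\,ds=\sum_{0\le k<l<n}\eta_i(k)\eta_j(l)+\sum_{k<n}\Psi_{ij}(k),
\end{equation*}
where $\Psi_{ij}(k)=\Psi_{ij}\circ\vt^k$ with $\Psi_{ij}(\om)=\int_0^{\tau(\om)}\xi_j(s,\om)\int_0^s\xi_i(u,\om)du\,ds$ is another bounded stationary sequence inheriting (\ref{2.12}). After multiplication by $\ve^2$ and insertion of $n=n(t\bar\tau\ve^{-2}):=\max\{n:T_n\le t\bar\tau\ve^{-2}\}$, the first sum aligns with $\bbS^{\eta,ij}_N$ and the second, by a strong LLN/invariance principle applied to the stationary sequence $\Psi_{ij}(k)-E\Psi_{ij}(0)$, contributes the deterministic drift $tE\Psi_{ij}(0)$ appearing as the last term of $\bbW^\ve_{ij}$. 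Combined with the analogous decomposition for $V^\ve$, this identifies the limiting objects correctly.

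The main obstacle will be controlling, in the $p$- and $p/2$-variation norms rather than just pointwise, two error sources: (a) the fractional Riemann tail $\int_{T_{n(T)}}^T\xi(s)ds$ and the analogous tail for $\bbV^\ve_{ij}$, which are bounded of size $O(\ve)$ over each of at most $O(\ve^{-2})$ return intervals but must be summed in $p$-variation; and (b) the shift between the random index $n(t\bar\tau\ve^{-2})$ and the deterministic index $[t\ve^{-2}]$. For (b) I would apply the classical strong invariance principle to the bounded $\phi$-mixing sequence $\tau\circ\vt^k-\bar\tau$ to obtain $|n(T)-T/\bar\tau|=O(T^{1/2-\del'})$ a.s., and then absorb the corresponding shift into $W^\ve$ by a law-of-iterated-logarithm control of the $p$-variation of $W^\ve$ on subintervals of length $O(\ve^{-1+2\del'})$; here the rough-path LIL control (the same tool invoked later in Section \ref{roughsec6} for $\bbW^\ve$) is essential in the $p/2$-variation estimate for $\bbV^\ve-\bbW^\ve$. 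Putting (a), (b), Theorem \ref{thm2.2} and the $\Psi_{ij}$ LLN together yields (\ref{2.25})--(\ref{2.26}) with a possibly smaller but strictly positive $\del$.
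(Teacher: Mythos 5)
Your blueprint matches the paper's own route: it reduces to Theorem \ref{thm2.2} for the suspension sequence $\eta(k)$, uses the cocycle identity on return intervals to split $\bbV^\ve_{ij}$ into the discrete iterated sum plus the stationary return-interval contribution $F_{ij}\circ\vt^k$ (your $\Psi_{ij}(k)$), whose law of large numbers produces the drift $tEF_{ij}$, controls the renewal discrepancy $n(t\bar\tau\ve^{-2})-[t\ve^{-2}]$ via a Chebyshev/Borel--Cantelli moment bound (Lemma \ref{lem5.3}), and then upgrades from supremum to $p$- and $p/2$-variation; Subsections \ref{subsec5.1}--\ref{subsec5.4} carry this out in exactly this order. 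Two small corrections: the renewal fluctuation should read $O(T^{1/2+\gamma})$ for every $\gamma>0$, not $O(T^{1/2-\del'})$, and the rough-path iterated-logarithm bound (Proposition \ref{prop:lil}) is invoked only in Section \ref{roughsec6} for the RDE continuity step, whereas inside the proof of Theorem \ref{thm2.5} the small-interval half of the variation split is supplied by the H\"older-type moment and Borel--Cantelli estimates (\ref{5.26})--(\ref{5.47}) for $V^\ve,\bbV^\ve$ and the Brownian increments; that step is the most technical part of the argument and is left implicit in your sketch, though your overall plan is sound.
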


Again, as explained in Section 7 the strong invariance principle (\ref{2.26}) for second order iterated integrals implies strong invariance principles for iterated integrals of all orders. Taking this into account we can
 use Theorem \ref{thm2.5} not only for the proof of Theorem \ref{thm2.4} but also can apply (\ref{2.26}) and its extension
 to multiple iterated integrals in order to obtain laws of iterated logarithm for iterated
  integrals of any order relying on the laws of iterated logarithm for multiple stochastic integrals (see, for instance,
  \cite{Bal}).
  \begin{corollary}\label{cor2.6}
  Under the conditions of Theorem \ref{thm2.5}, for all $\ell\geq 1$, 
   \begin{equation*}
  P\big\{\limsup_{t\to\infty}\frac {\int_0^t\xi_{i_\ell}(t_\ell)dt_\ell\int_0^{t_\ell}\xi_{i_{\ell-1}}(t_{\ell-1})dt_{\ell-1}\cdots\int_0^{t_2}\xi_{i_1}(t_1)dt_1}
  {(2t\log\log t)^{\ell/2}}=d_{i_1,i_2,...,i_\ell}\big\}=1
  \end{equation*}
  where $1\le i_1\le i_2\leq ...\leq i_\ell\le e$ and the constant $d_{i_1,i_2,...,i_\ell}$
   can be computed from the formula given in Corollary \ref{cor:BaldiGenInt}.

   \end{corollary}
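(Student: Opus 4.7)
The plan is to combine the strong invariance principle of Theorem \ref{thm2.5}, together with its extension to iterated integrals of all orders announced for Section \ref{roughsec7}, with the classical law of iterated logarithm for multiple Wiener--It\^o integrals of Baldi \cite{Bal}. Morally, the strong approximation transports the LIL statement from the Brownian side to the $\xi$-side.

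First I would invoke the Section \ref{roughsec7} extension. Once a rough path lift $(V^\ve, \bbV^\ve)$ is strongly approximated by a Brownian rough path $(W^\ve, \bbW^\ve)$ in $p$- and $p/2$-variation with rate $\ve^\delta$, Chen's relation and the extension theorem of Lyons recover every higher signature component as a continuous polynomial functional of the level-one and level-two data, and the local Lipschitz estimates propagate the $O(\ve^\delta)$ bound to all orders $\ell$ in an appropriate $p/\ell$-variation norm on any fixed time interval $[0,T]$. Thus, writing $J^\xi_{i_1,\dots,i_\ell}(t)=\int_0^t \xi_{i_\ell}(t_\ell)\,dt_\ell\cdots\int_0^{t_2}\xi_{i_1}(t_1)\,dt_1$ and denoting by $I^{\cW}_{i_1,\dots,i_\ell}(t)$ the corresponding $\ell$-fold It\^o integral of the universal Brownian motion $\cW$ augmented by the lower-order deterministic correction terms dictated by the drift in $\bbW^\ve_{ij}$, one obtains
\begin{equation*}
\big| J^\xi_{i_1,\dots,i_\ell}(\bar\tau\ve^{-2}t)-I^{\cW}_{i_1,\dots,i_\ell}(\bar\tau\ve^{-2}t)\big|=O(\ve^{\delta-\ell})\quad\text{a.s., uniformly in }t\in[0,T].
\end{equation*}

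Next I would rescale: putting $\ve_n=\sqrt{\bar\tau/t_n}$ with a dyadic sequence $t_n=2^n$ and $T=1$ gives the endpoint bound $|J^\xi_{i_1,\dots,i_\ell}(t_n)-I^{\cW}_{i_1,\dots,i_\ell}(t_n)|=O(t_n^{\ell/2-\delta/2})$ almost surely. Because the preceding estimate is actually in $p/\ell$-variation on the rescaled unit interval, it controls the supremum of the deviation over each dyadic block $[t_n,t_{n+1}]$ of the original time parameter; a Borel--Cantelli argument then promotes it to a continuous-$t$ bound
\begin{equation*}
\sup_{0\leq s\leq t}\big| J^\xi_{i_1,\dots,i_\ell}(s)-I^{\cW}_{i_1,\dots,i_\ell}(s)\big|=O(t^{\ell/2-\delta/2})\quad\text{a.s.}
\end{equation*}

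Finally I would apply Baldi's law of iterated logarithm \cite{Bal} to the multiple It\^o integral $I^{\cW}_{i_1,\dots,i_\ell}(t)$ of the $d$-dimensional Brownian motion $\cW$ with covariance $\vs$, which yields almost surely
\begin{equation*}
\limsup_{t\to\infty}\frac{I^{\cW}_{i_1,\dots,i_\ell}(t)}{(2t\log\log t)^{\ell/2}}=d_{i_1,\dots,i_\ell},
\end{equation*}
with an explicit constant depending on $\vs$ and on the combinatorics of the iterated integral. Since the approximation error $O(t^{\ell/2-\delta/2})$ and the deterministic drift corrections, which are of order $O(t^{\ell/2-1})$, are both $o((2t\log\log t)^{\ell/2})$, they are absorbed into the limsup and the conclusion of the corollary follows.

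The main obstacle is not Baldi's LIL, which is classical, but rather the clean implementation of the two preparatory steps: (i) verifying that the rough-paths extension of Theorem \ref{thm2.5} to arbitrary level $\ell$ really delivers an error of the form $O(\ve^{\delta'})$ with a $\delta'>0$ independent of $\ell$ on a fixed interval, and (ii) converting that fixed-$T$ almost sure bound into a uniform statement on the unbounded $t$-axis via a dyadic Borel--Cantelli argument that exploits the $p/\ell$-variation control inside each block. The correction drift terms, which become combinatorially richer at higher orders, must be carefully matched between the $\xi$-side and the $\cW$-side so that only \emph{truly} lower-order remainders remain.
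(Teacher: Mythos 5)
Your overall strategy — transfer the LIL from the Brownian side via the rough-path strong invariance principle at every level $\ell$, then apply Baldi's law — is sound and is indeed the route the paper takes (via Theorem \ref{thm:asIVPell}, Proposition \ref{prop:LILBRP}, and then the analogue of Theorem 7.8/Corollary \ref{cor:BaldiGen} in integrals rather than sums). Two of your steps, however, need correction.

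First, the dyadic Borel--Cantelli argument is superfluous. Theorem \ref{thm2.5} and its level-$\ell$ extension already deliver an a.s.\ bound of the form $\|V^\varepsilon-W^\varepsilon\|_{p,[0,T]}+\|\mathbb V^\varepsilon-\mathbb W^\varepsilon\|_{p/2,[0,T]}\le C(\omega)\varepsilon^\delta$ with one random constant valid \emph{simultaneously} for all $\varepsilon\in(0,1)$; the paper says so explicitly. Once this uniform-in-$\varepsilon$ control and its propagation through Theorem \ref{thm:LyonsLiftLip} are in place, one simply rescales and observes that dividing by $(2t\log\log t)^{\ell/2}$ kills the error — exactly as in the proof of Theorem 7.8. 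Introducing a fresh Borel--Cantelli over dyadic blocks only risks losing the uniformity you already have. (Also, your error bound $O(\varepsilon^{\delta-\ell})$ should read $O(\varepsilon^{2\delta-\ell})$, i.e.\ $O(N^{\ell/2-\delta})$, but this does not affect the conclusion.)

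Second and more substantively, the statement that the deterministic drift corrections "are of order $O(t^{\ell/2-1})$" is wrong. A single neighbouring-pair contraction in $\mathbf{\overline W}^\ell$ (Example \ref{exam:gBRP}) replaces two Brownian increments by a Lebesgue increment, producing an iterated integral with $L^2$ size $\sim t\cdot t^{(\ell-2)/2}=t^{\ell/2}$ — the \emph{same} power of $t$ as the leading term. These contributions are $o((2t\log\log t)^{\ell/2})$ only because they carry fewer $\log\log t$ factors, and establishing this requires either an inductive LIL for mixed time--space iterated integrals or, more cleanly, exactly what Proposition \ref{prop:LILBRP} does: prove Strassen's functional LIL directly for Brownian rough paths with general parameters $(\Sigma,\Gamma)$, so the $\Gamma$-drift is part of the object and the rate functional (hence the constant $d_{i_1,\ldots,i_\ell}$) only sees $\Sigma=\varsigma$. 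Your way of splitting off the drift and appealing to Baldi's LIL for the drift-free part can be made to work, but as written it asserts a false power-of-$t$ gain and leaves the absorption of the corrections unjustified. The paper's Strassen-for-$(\Sigma,\Gamma)$-rough-paths route avoids the issue entirely.
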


The main application to dynamical systems we have here in mind is a $C^2$ Axiom A flow $F^t$ near an attractor which
using Markov partitions can be represented as a suspension over an exponentially fast $\psi$-mixing transformation so that
we can take $\xi(t)=g\circ F^t$ for a H\" older continuous function $g$ and the probability $P$ being a Gibbs invariant
measure constructed by a H\" older continuous potential on the base of the Markov partition (see, for instance, \cite{BR}).

\begin{remark}\label{rem2.7}
It is not clear how to extend our methods to derive Theorems \ref{thm2.1} and \ref{thm2.4} for the processes $X_N$ and $X^\ve$
given by the general equations (\ref{1.3}) and (\ref{1.1}), respectively. This would require, say in the discrete time case,
 to obtain estimates for strong (almost sure) approximations for the sums $S_N(x,t)=N^{-1/2}\sum_{0\leq k< [Nt]}B(x,\xi(k))$
 and iterated sums $\bbS^{ij}_N(x,t)=N^{-1}\sum_{0\leq k<l<[Nt]}B_i(x,\xi(k))B_j(x,\xi(l))$ in the supremum (even a H\" older)
 in $x$ norm. This, essentially, amounts to a strong invariance principle in a Banach space which was proved for sums in
  \cite{KP} but only with logarithmic estimates which does not seem to allow to extend it to iterated sums and to
  $p$-variation norms. If this were
  possible then we could proceed relying on local Lipschitz continuity of the Banach space version of the  It\^ o-Lyons map
 for rough differential equations (see, for instance, Theorem 3.6 in \cite{CFKMZ}; also \cite{KM2, BC}).
\end{remark}
\begin{remark}\label{rem2.8} Our results can be obtained assuming moment
rather than uniform bounds, namely, in place of $\|\xi\|_\infty<\infty$
we can assume that $\|\xi\|_m<\infty$ for some $m$ big enough. To do this
it is helpful to replace the $\phi$-mixing coefficient by more general dependence coefficients between pairs of $\sig$-algebras
$\cG,\cH\subset\cF$ defined by
\[
\varpi_{q,p}(\cG,\cH)=\sup\{\|E(g|\cG)-Eg\|_p:\, g\,\,\mbox{is}\,\,\cH-\mbox{measurable and}\,\,\|g\|_q\leq 1\}.
\]
The proofs proceed then essentially in the same way estimating moments of
conditional expectations not by Lemma \ref{lem3.1} below, as we do here, but
by Corollary 3.6 from \cite{KV} and relying on some related estimates from
Section 3 there.
\end{remark}
\begin{remark}\label{rem2.9}
Our method in the proof of Theorem \ref{thm2.5} can be slightly modified to extend Theorem \ref{thm2.2} to random vectors built by nonuniformly hyperbolic dynamical systems modelled
by Young towers which are discrete time suspensions assuming boundedness of appropriate
moments of the return time function.
\end{remark}

\section{Strong approximations for sums  }\label{roughsec3}\setcounter{equation}{0}
\subsection{General lemmas}\label{subsec3.1}
First, we will formulate three general results which will be used throughout this
paper. The following lemma is well known (see, for instance, Corollary
to Lemma 2.1 in \cite{Kha} or Lemma 1.3.10 in \cite{HK}).
\begin{lemma}\label{lem3.1}
Let $H(x,\om)$ be a bounded measurable function on the space $(\bbR^d\times\Om,\,\cB\times\cF)$,
where $\cB$ is the Borel $\sig$-algebra, such that for each $x\in\bbR^d$ the function $H(x,\cdot)$
is measurable with respect to a $\sig$-algebra $\cG\subset\cF$. Let $V$ be an $\bbR^d$-valued
random vector measurable with respect to another $\sig$-algebra $\cH\subset\cF$.
Then with probability one,
\begin{equation}\label{3.1}
 |E(H(V,\om)|\cH)-h(V)|\leq 2\phi(\cG,\cH)\| H\|_{\infty}
 \end{equation}
where $h(x)=EH(x,\cdot)$ and the $\phi$-dependence coefficient was defined in (\ref{2.1}).
In particular (which is essentially an equivalent statement), let
 $H(x_1,x_2),\, x_i\in\bbR^{d_i},\, i=1,2$ be a bounded Borel function and $V_i$ be
 $\bbR^{d_i}$-valued $\cG_i$-measurable random vectors, $i=1,2$ where $\cG_1,\cG_2\subset\cF$ are
 sub $\sig$-algebras. Then with probability one,
 \begin{equation*}
 |E(H(V_1,V_2)|\cG_1)-h(V_1)|\leq 2\phi(\cG_1,\cG_2)\| H\|_{\infty}.
 \end{equation*}
 \end{lemma}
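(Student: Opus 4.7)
The plan is to deduce the lemma from the second, $L^\infty$-form of the $\phi$-coefficient recorded in (\ref{2.1}),
\[
2\phi(\cG,\cH) \;=\; \sup\bigl\{\|E(g|\cG)-Eg\|_\infty : g \text{ is } \cH\text{-measurable, }\|g\|_\infty\le 1\bigr\},
\]
which is classical and can be imported from \cite{Bra}. The two formulations stated in the lemma are essentially a relabeling of each other, so I would carry out the argument for the product form $|E(H(V_1,V_2)|\cG_1)-h(V_1)|\le 2\phi(\cG_1,\cG_2)\|H\|_\infty$ and then recover the first by regarding the sample point $\om$ itself as the second coordinate (using the $\cG$-measurable cross-sections $H(x,\cdot)$ in place of a $\cG_2$-measurable $V_2$).

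First I would treat simple tensor-product test functions $H(x_1,x_2)=\sum_{j,k}c_{jk}\mathbf{1}_{B_j^1}(x_1)\mathbf{1}_{B_k^2}(x_2)$, where $\{B_j^1\}$ and $\{B_k^2\}$ are finite Borel partitions and $|c_{jk}|\le\|H\|_\infty$. Writing $A_j=\{V_1\in B_j^1\}\in\cG_1$ and $C_k=\{V_2\in B_k^2\}\in\cG_2$, and pulling the $\cG_1$-measurable indicators $\mathbf{1}_{A_j}$ out of the conditional expectation, one obtains off a single null set the identity
\[
E\bigl(H(V_1,V_2)|\cG_1\bigr)-h(V_1)\;=\;\sum_{j}\mathbf{1}_{A_j}\bigl[E(g_j|\cG_1)-Eg_j\bigr],\qquad g_j\;:=\;\sum_{k}c_{jk}\mathbf{1}_{C_k}.
\]
Since the $A_j$ are pairwise disjoint and each $g_j$ is $\cG_2$-measurable with $\|g_j\|_\infty\le\|H\|_\infty$, the $L^\infty$-characterization above bounds the right-hand side by $2\phi(\cG_1,\cG_2)\|H\|_\infty$ uniformly in $\om$.

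To conclude, a general bounded Borel $H$ is approximated uniformly by simple tensors of the above form via a monotone class argument (products of Borel rectangles generate the product $\sig$-algebra), and the uniform $L^\infty$ character of the bound permits a single exceptional set to serve in the limit. The only real subtlety in executing this plan is the bookkeeping of null sets across the approximation, and it is made trivial by the fact that the right-hand side of the target inequality depends on $H$ only through $\|H\|_\infty$; accordingly I do not expect a genuine obstacle beyond organising this standard approximation cleanly.
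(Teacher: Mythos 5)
Your reduction to finite tensor products $H(x_1,x_2)=\sum_{j,k}c_{jk}\mathbf 1_{B_j^1}(x_1)\mathbf 1_{B_k^2}(x_2)$ with Borel partitions is correct, and the identity $E(H(V_1,V_2)\,|\,\cG_1)-h(V_1)=\sum_j\mathbf 1_{A_j}\bigl[E(g_j\,|\,\cG_1)-Eg_j\bigr]$ together with disjointness of the $A_j$ and $\|g_j\|_\infty\le\|H\|_\infty$ does give the bound for such $H$ (off the union of the finitely many exceptional null sets attached to the finitely many $g_j$). The paper itself gives no proof of Lemma~\ref{lem3.1} but cites classical references, so there is no internal argument to compare against.

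However, the passage from finite tensor products to general bounded Borel $H$ is a genuine gap, not a routine null-set bookkeeping matter. First, uniform approximation of a generic bounded Borel function on a product by simple tensor functions simply does not exist: already for $d_1=d_2=1$ and $H=\mathbf 1_{\{x_1<x_2\}}$, any simple function built from finitely many rectangles that agrees with $H$ to within $1/2$ would have to place, on each rectangle $B\times C$ on which it takes the value $0$, the constraint $\inf B\ge\sup C$, and finitely many such rectangles cannot cover the uncountable diagonal; so no uniform approximant exists. Second, if one downgrades to pointwise-bounded or $L^1$ convergence, the usual functional monotone class theorem is not applicable because the class $\{H:\ |E(H(V_1,V_2)\,|\,\cG_1)-h(V_1)|\le 2\phi(\cG_1,\cG_2)\,\|H\|_\infty\ \text{a.s.}\}$ is not a vector space: $\|H_1+H_2\|_\infty$ can be strictly smaller than $\|H_1\|_\infty+\|H_2\|_\infty$, so adding the two inequalities overshoots. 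Third, even with pointwise convergence along a subsequence together with truncation to keep $\|H_n\|_\infty\le\|H\|_\infty$, one needs \emph{both} $H_n(V_1,V_2)\to H(V_1,V_2)$ a.s.\ and $h_n(V_1)\to h(V_1)$ a.s.\ with $h_n(x_1)=E\,H_n(x_1,V_2)$; these require approximation $\mu$-a.e.\ and $(\mu_1\otimes\nu)$-a.e.\ where $\mu$ is the joint law of $(V_1,V_2)$ and $\mu_1,\nu$ the marginals, and $\mu$ need not be absolutely continuous with respect to $\mu_1\otimes\nu$. That is precisely the point you wave away as ``made trivial.''

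A workable repair of your plan: approximate $H$ in $L^1(\mu+\mu_1\otimes\nu)$ by truncated simple tensor products, pass to a subsequence converging a.e.\ for both measures, then use conditional dominated convergence for the left-hand side and dominated convergence (after a Fubini disintegration) for the $h_n(V_1)$ term, taking the union of the countably many exceptional null sets. An alternative, cleaner route avoids approximation of $H$ altogether: let $\nu_\om$ be a regular conditional distribution of $V_2$ given $\cG_1$ (which exists since $V_2$ is $\bbR^{d_2}$-valued); using a countable algebra generating $\cB(\bbR^{d_2})$ one gets $\|\nu_\om-\nu\|_{TV}\le 2\phi(\cG_1,\cG_2)$ a.s.\ from the first characterization in \eqref{2.1}, and then $|E(H(V_1,V_2)\,|\,\cG_1)-h(V_1)|=\bigl|\int H(V_1,y)\,(\nu_\om-\nu)(dy)\bigr|\le\|H\|_\infty\,\|\nu_\om-\nu\|_{TV}$ gives the claim directly and uniformly over all bounded Borel $H$ at once, with a single exceptional null set.
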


 We will employ several times the following general moment estimate which appeared as Lemma 3.2.5 in \cite{HK} for random variables and was extended to random vectors in
 Lemma 3.4 from \cite{Ki20}.
  \begin{lemma}\label{lem3.2}
  Let $(\Om,\cF,P)$ be a probability space with a filtration of $\sig$-algebras $\cG_j,\, j\geq 1$ and
  a sequence of random $d$-dimensional vectors $\eta_j,\, j\geq 1$ such that $\eta_j$
  is $\cG_j$-measurable, $j=1,2,...$. Suppose that for some integer $M\geq 1$,
  \[
  A_{2M}=\sup_{i\geq 1}\sum_{j\geq i}\| E(\eta_j|\cG_i)\|_{2M}<\infty
  \]
  where $\|\eta\|_p=(E|\eta|^p)^{1/p}$ and $|\eta|$ is the Euclidean norm of a (random) vector $\eta$.
  Then for any integer $n\geq 1$,
  \begin{equation}\label{3.2}
  E|\sum_{j=1}^n\eta_j|^{2M}\leq 3(2M)!d^MA_{2M}^{2M}n^M.
  \end{equation}
  \end{lemma}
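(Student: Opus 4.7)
The statement is a Rosenthal-type $(2M)$-th moment bound for partial sums of a $\cG_j$-adapted sequence whose weak dependence is quantified by the $L^{2M}$-decay of the forward conditional expectations $E(\eta_j\mid\cG_i)$. The target is the diffusive scaling $n^M$, uniform in $n$.

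My plan is to first reduce the vector-valued statement to a one-dimensional one. The power-mean inequality gives $|v|^{2M}=(\sum_{k=1}^d v_k^2)^M\le d^{M-1}\sum_{k=1}^d v_k^{2M}$, so
\[
 E\Big|\sum_{j=1}^n\eta_j\Big|^{2M}\le d^{M-1}\sum_{k=1}^d E\Big(\sum_{j=1}^n\eta_j^{(k)}\Big)^{2M}.
\]
Since $|\eta_j^{(k)}|\le|\eta_j|$, each coordinate process is $\cG_j$-adapted and satisfies the same hypothesis with a constant no larger than $A_{2M}$, so it suffices to prove the scalar bound $E(\sum_{j=1}^n\eta_j)^{2M}\le 3(2M)!A_{2M}^{2M}n^M$; summing over the $d$ coordinates against the $d^{M-1}$ prefactor then yields the claimed $d^M$.

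For the scalar case I would expand
\[
 E\Big(\sum_{j=1}^n\eta_j\Big)^{2M}=\sum_{j_1,\ldots,j_{2M}=1}^n E[\eta_{j_1}\cdots\eta_{j_{2M}}],
\]
take absolute values, and group tuples by their weak ordering; since each multiset of size $2M$ arises from at most $(2M)!$ ordered tuples, this sum is bounded by $(2M)!$ times $\sum_{j_1\le\cdots\le j_{2M}\le n}|E[\eta_{j_1}\cdots\eta_{j_{2M}}]|$. For an ordered tuple, the filtration property makes $\eta_{j_1}\cdots\eta_{j_{2M-1}}$ measurable with respect to $\cG_{j_{2M-1}}$, so tower and Hölder give
\[
 |E[\eta_{j_1}\cdots\eta_{j_{2M}}]|=\big|E\big[\eta_{j_1}\cdots\eta_{j_{2M-1}}\,E(\eta_{j_{2M}}\mid\cG_{j_{2M-1}})\big]\big|,
\]
which I would iterate, pairing the indices as $(j_{2i-1},j_{2i})$, $i=1,\dots,M$, and applying the generalised Hölder inequality with all exponents equal to $2M$ so that every $\eta_{j_i}$ receives exactly the $L^{2M}$ norm controlled by the hypothesis (note that $\|\eta_j\|_{2M}=\|E(\eta_j\mid\cG_j)\|_{2M}\le A_{2M}$). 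Summing first the $j_{2i}$'s against the $\|E(\eta_{j_{2i}}\mid\cG_{j_{2i-1}})\|_{2M}$ factors supplies one $A_{2M}$ per pair by the definition of $A_{2M}$, while the $M$ remaining free indices $j_1,j_3,\ldots,j_{2M-1}$ are summed over $[n]$, delivering the desired $n^M$ and an additional $A_{2M}^M$ from the $\|\eta_{j_{2i-1}}\|_{2M}$ factors. A small constant (yielding the 3) appears from separating diagonal contributions from the off-diagonal ones when counting orderings.

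The main technical obstacle is the combinatorial/Hölder bookkeeping in the last step: one must arrange the pairing and the successive applications of conditional expectations so that each $\eta_j$ appears in exactly the exponent $2M$ (the only norm the hypothesis controls), and so that after summing the paired index against the forward conditional expectation, the free index really is free to range over all of $[1,n]$. Because the lemma is already established in \cite[Lemma 3.2.5]{HK} (scalar case) and \cite[Lemma 3.4]{Ki20} (vector extension), in the actual write-up I would simply cite these references and omit the details.
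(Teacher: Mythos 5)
The paper itself offers no proof of this lemma: it is stated and immediately followed by a citation of Lemma~3.2.5 in \cite{HK} (scalar case) and Lemma~3.4 in \cite{Ki20} (vector extension). Since you too conclude by deferring to exactly these references, your actual write-up plan matches the paper. The coordinate-wise reduction you sketch is also sound: $|v|^{2M}\le d^{M-1}\sum_{k}(v^{(k)})^{2M}$, together with the contraction $\|E(\eta_j^{(k)}\mid\cG_i)\|_{2M}\le\|E(\eta_j\mid\cG_i)\|_{2M}$ (so each coordinate's $A$-constant is $\le A_{2M}$), turns a scalar bound with constant $3(2M)!A_{2M}^{2M}n^M$ into the stated vector bound with $d^M$.

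That said, the combinatorial scalar argument you outline as motivation does have a genuine gap, and I would not call it mere bookkeeping. Extracting the single factor $E(\eta_{j_{2M}}\mid\cG_{j_{2M-1}})$ is clean because $\eta_{j_1}\cdots\eta_{j_{2M-1}}$ is $\cG_{j_{2M-1}}$-measurable. But after this one move, Hölder with all exponents $2M$ controls the remaining $2M-1$ free indices only by their $L^{2M}$ norms, so summing over $j_1\le\cdots\le j_{2M-1}\le n$ produces $O(n^{2M-1})$, not the needed $n^{M-1}$. The proposed fix---pair $(j_{2i-1},j_{2i})$ and peel off $M$ factors $E(\eta_{j_{2i}}\mid\cG_{j_{2i-1}})$ simultaneously before a single Hölder application---is not a legal move: once you condition on $\cG_{j_{2M-1}}$, the quantity inside the expectation is no longer a plain product of the individual $\eta_j$'s, and conditioning it on the smaller $\sigma$-algebra $\cG_{j_{2M-3}}$ produces $E(\eta_{j_{2M-2}}\eta_{j_{2M-1}}E(\eta_{j_{2M}}\mid\cG_{j_{2M-1}})\mid\cG_{j_{2M-3}})$, which is \emph{not} bounded coordinate-wise by $\|E(\eta_{j_{2M-2}}\mid\cG_{j_{2M-3}})\|_{2M}$ times the rest. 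In short, the $M$ ``decay'' factors you need cannot be generated by $M$ independent applications of the same conditioning-and-Hölder step; the cited reference gets around this with a more careful (essentially inductive/martingale-type) decomposition. Since you were planning to cite anyway this does not affect your submission, but it is worth knowing that the gap you flag is real and would block you if you tried to fill in the sketch as written.
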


  In order to obtain uniform moment estimates required by Theorem \ref{thm2.1} we will need the following general estimate which appeared as Lemma 3.7 in \cite{Ki20}.
\begin{lemma}\label{lem3.3} Let $\eta_1,\eta_2,...,\eta_N$ be random $d$-dimensional vectors and
$\cH_1\subset\cH_2\subset...\subset\cH_N$ be a filtration of $\sig$-algebras such that $\eta_m$ is
$\cH_m$-measurable for each $m=1,2,...,N$. Assume also that $E|\eta_m|^{2M}<\infty$ for some $M\geq 1$
and each $m=1,...,n$. Set $S_m=\sum_{j=1}^m\eta_j$. Then
\begin{eqnarray}\label{3.3}
&E\max_{1\leq m\leq n}|S_m|^{2M}\leq 2^{2M-1}\big((\frac {2M}{2M-1})^{2M}E|S_n|^{2M}\\
&+E\max_{1\leq m\leq n-1}|\sum^n_{j=m+1}E(\eta_j|\cH_m)|^{2M}\big)
\leq 2^{2M-1}A_{2M}^{2M}(3(2M)!d^Mn^{M}+n)\nonumber
\end{eqnarray}
where $A_{2M}$ is the same as in Lemma \ref{lem3.2}.
\end{lemma}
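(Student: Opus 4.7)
The plan is to reduce the estimate on $\max_m|S_m|^{2M}$ to a maximal inequality for a backward-looking martingale plus a remainder controlled by the conditional expectations $E(\eta_j\,|\,\cH_m)$ with $j>m$. The key algebraic identity is
\[
S_m = E(S_n\,|\,\cH_m) - \sum_{j=m+1}^n E(\eta_j\,|\,\cH_m),
\]
which follows immediately by applying $E(\,\cdot\,|\,\cH_m)$ to $S_n = S_m + \sum_{j=m+1}^n\eta_j$ and using that $S_m$ is $\cH_m$-measurable. This decomposition is the whole heart of the proof.

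First, I would apply the elementary inequality $|a+b|^{2M}\leq 2^{2M-1}(|a|^{2M}+|b|^{2M})$ to the identity above and take maxima over $m$, giving
\[
\max_{1\leq m\leq n}|S_m|^{2M}\leq 2^{2M-1}\Bigl(\max_{1\leq m\leq n}|E(S_n|\cH_m)|^{2M}+\max_{1\leq m\leq n-1}\Bigl|\sum_{j=m+1}^n E(\eta_j|\cH_m)\Bigr|^{2M}\Bigr).
\]
The process $M_m:=E(S_n\,|\,\cH_m)$ is a martingale adapted to $(\cH_m)$ with terminal value $S_n$, so Doob's $L^{2M}$ maximal inequality yields $E\max_m|M_m|^{2M}\leq \bigl(\frac{2M}{2M-1}\bigr)^{2M}E|S_n|^{2M}$. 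Taking expectations delivers the first inequality of the statement.

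For the second inequality, I would invoke Lemma \ref{lem3.2} applied to the same filtration $(\cH_j)$ and sequence $(\eta_j)$ to get $E|S_n|^{2M}\leq 3(2M)!\,d^M A_{2M}^{2M}n^M$. For the remainder maximum, I would use the crude pointwise bound $\max_{1\leq m\leq n-1}(\cdot)^{2M}\leq \sum_{m=1}^{n-1}(\cdot)^{2M}$ and then Minkowski's inequality in $L^{2M}$ together with the definition of $A_{2M}$:
\[
\Bigl\|\sum_{j=m+1}^n E(\eta_j|\cH_m)\Bigr\|_{2M}\leq \sum_{j=m+1}^n\|E(\eta_j|\cH_m)\|_{2M}\leq A_{2M},
\]
since $\{j:j>m\}\subset\{j:j\geq m\}$. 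Summing $A_{2M}^{2M}$ over $m=1,\dots,n-1$ contributes the $nA_{2M}^{2M}$ term, and combining the two parts (absorbing the $\bigl(\frac{2M}{2M-1}\bigr)^{2M}$ factor into the combinatorial constant $3(2M)!$) yields the second inequality.

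No step looks like a serious obstacle: Doob's inequality and Minkowski are standard, and Lemma \ref{lem3.2} is already available. The only mildly non-obvious point is spotting the decomposition $S_m=E(S_n|\cH_m)-\sum_{j>m}E(\eta_j|\cH_m)$, which rewrites $S_m$ (rather than the increments) as a martingale minus a tail correction, and thereby allows Doob's inequality to be applied to $M_m$ instead of to $S_m$ itself. This is essential precisely because the $\eta_j$ are not assumed to be a martingale difference sequence.
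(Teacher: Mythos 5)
Your approach is correct and is precisely the intended one: the identity $S_m=E(S_n\,|\,\cH_m)-\sum_{j=m+1}^n E(\eta_j\,|\,\cH_m)$, Doob's $L^{2M}$ maximal inequality applied to the Doob martingale $m\mapsto E(S_n\,|\,\cH_m)$, and then Lemma~\ref{lem3.2} together with $\max\le\sum$ and Minkowski for the tail term --- this is visible from the form of the first bound in \eqref{3.3}, which explicitly carries the Doob constant $\bigl(\tfrac{2M}{2M-1}\bigr)^{2M}$. One caveat: your closing remark that this factor can be ``absorbed into $3(2M)!$'' is not a rigorous step, since the constant appearing in the final bound of \eqref{3.3} is \emph{exactly} the constant $3(2M)!d^M$ from Lemma~\ref{lem3.2}; taken literally, the last inequality is off by the factor $\bigl(\tfrac{2M}{2M-1}\bigr)^{2M}\in(e,4]$. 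This small looseness is inherited from the cited source and is harmless for every application in the paper, but it should be flagged rather than silently absorbed.
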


The following result will be used for moment estimates of sums and iterated sums of random variables, the latter part seems to be
completely new.
\begin{lemma}\label{lem3.4}
Let $\zeta(k),\,\mu(k),\, k=0,1,...$ be two sequences of random variables on a probability space
$(\Om,\cF,P)$ such that
\begin{eqnarray*}
&E\zeta(k)=E\mu(k)=0\,\,\mbox{for all}\,\, k\geq 0,\,\,\sup_{k\geq 0} E(|\zeta(k)|^{2M}+|\mu(k)|^{2M})<\infty\,\,\,\mbox{and}\\
&|\zeta(k)-E(\zeta(k)|\cF_{k-n,k+n})|,\,|\mu(k)-E(\mu(k)|\cF_{k-n,k+n})|\leq\rho(n)
\end{eqnarray*}
where the probability $P$ is $\phi$-mixing with respect to the family of $\sig$-algebras $\cF_{kl}$
(as described in Section \ref{roughsec2}) with $\phi$ and
$\rho\geq 0$ satisfying $\sum_{n=0}^\infty n(\rho(n)+\phi(n))<\infty$. Set
\[
\bbR(\ell,m,n)=\sum_{k=m}^n\sum_{j=\ell(k)}^{k-1}(\zeta(j)\mu(k)-E(\zeta(j)\mu(k)))
\]
where $0\leq m\leq n$ are integers and $0\leq\ell(k)<k$ is an integer valued function (maybe constant). Then
\begin{equation}\label{3.4}
E\max_{m\leq n<N}|\bbR(\ell,m,n)|^{2M}\leq C_1^{\zeta,\mu}(M)(N-m)^M\max_{m\leq k\leq N}(k-\ell(k))^M
\end{equation}
where $C^{\zeta,\mu}(M)>0$ does not depend on $N,\, m$ or $\ell$. In fact, $C^{\zeta,\mu}(M)$ depends
only on $M,\rho$ and $\phi$ while it does not depend on the sequences $\zeta(k)$ and
$\mu(k)$ themselves. In particular,
\[
E\max_{m\leq n<N}|\sum_{k=m}^n\mu(k)|^{2M}\leq C_1^{1,\mu}(M)(N-m)^M
\]
which is obtained simplifying the proof below just by disregarding the sequence $\zeta(j)$.
If $\xi(k),\, k=0,1,...$ is a sequence of random vectors with the components $\xi_i(k),\, i=1,...,d$ satisfying the conditions
 above, then
 \[
E\max_{m\leq n<N}|\sum_{k=m}^n\xi(k)|^{2M}\leq C_1^\xi(M)(N-m)^M
\]
for $C_1^\xi(M)=d^{2M-1}\sum_{i=1}^dC_1^{1,\xi_i}$ where $|\cdot|$ denotes the Euclidean norm in $\bbR^d$.
 \end{lemma}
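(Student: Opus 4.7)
My plan is to reduce (3.4) to the maximal moment inequality of Lemma 3.3 applied to the centered summand
$$\eta_k := \sum_{j=\ell(k)}^{k-1}\bigl(\zeta(j)\mu(k) - E(\zeta(j)\mu(k))\bigr) = \mu(k) Z_k - E(\mu(k) Z_k), \qquad Z_k := \sum_{j=\ell(k)}^{k-1}\zeta(j),$$
so that $\bbR(\ell, m, n) = \sum_{k=m}^n \eta_k$. The rewritten form exposes a single-index structure whose only ``inner'' content is the partial sum $Z_k$, of length $k - \ell(k)$.

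Since Lemma 3.3 requires exact measurability, I would first approximate $\zeta, \mu$ by $\tilde\zeta_r(j) := E(\zeta(j)\mid\cF_{j-r,j+r})$ and $\tilde\mu_r(k) := E(\mu(k)\mid\cF_{k-r,k+r})$ for a parameter $r$ chosen to grow with $N$, paying a pointwise error $\rho(r)$ per approximation. The resulting substitution error in $\bbR$ is $O(N\max_k(k-\ell(k))\rho(r))$ almost surely; since $\sum n\rho(n)<\infty$ forces $\rho$ to decay fast enough, $r$ (polynomial in $N$) can be chosen so that this error is negligible relative to the claimed bound. With the approximated summands $\tilde\eta_k^r = \tilde\mu_r(k)\tilde Z_k^r - E(\tilde\mu_r(k)\tilde Z_k^r)$, each is $\cH_k$-measurable for $\cH_k := \cF_{-\infty, k+r}$, and Lemma 3.3 reduces everything to bounding
$$A_{2M} = \sup_i \sum_{k \geq i}\bigl\|E(\tilde\eta_k^r \mid \cH_i)\bigr\|_{2M}.$$

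For $k$ within $O(r)$ of $i$ (only $O(r)$ terms) I would use the crude bound $\|\tilde\eta_k^r\|_{2M} \leq C(k-\ell(k))^{1/2}$, obtained by applying Lemma 3.2 to the inner partial sum $Z_k$ and then Cauchy--Schwarz with the uniformly bounded factor $\tilde\mu_r(k)$. For $k > i + O(r)$ I would split $Z_k = Z_k^- + Z_k^+$ with $Z_k^- := \sum_{\ell(k)\leq j \leq i-r}\tilde\zeta_r(j)$ being $\cH_i$-measurable. Then
$$E(\tilde\mu_r(k) Z_k^- \mid \cH_i) - E(\tilde\mu_r(k) Z_k^-) = Z_k^-\bigl[E(\tilde\mu_r(k) \mid \cH_i) - E\tilde\mu_r(k)\bigr],$$
whose $L^{2M}$-norm is controlled via Lemma 3.1 by a factor $\phi(k-i-2r)$ times $\|Z_k^-\|_{2M}\leq C(k-\ell(k))^{1/2}$; the remaining $Z_k^+$ part is decorrelated from $\cH_i$ by a second application of Lemma 3.1 to the pair of $\sigma$-algebras $\cH_i$ and $\cF_{i-r+1, k+r}$. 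Summing over $k$ and using summability of both $\phi$ and $\rho$ (inherited from the standing hypotheses of Theorem 2.1) yields $A_{2M}\leq C\max_k(k-\ell(k))^{1/2}$, and Lemma 3.3 gives the claimed $E|\bbR|^{2M} \leq C(N-m)^M\max_k(k-\ell(k))^M$.

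The main obstacle is the bookkeeping in the $\|E(\tilde\eta_k^r\mid\cH_i)\|_{2M}$ estimate: Lemma 3.1 delivers $L^\infty$ bounds, so I must trade them for $L^{2M}$ bounds by absorbing the $L^\infty$ norm of the bounded factor $\tilde\mu_r(k)$ while retaining the $L^{2M}$ norm of the partial-sum factor (which supplies the decisive $(k-\ell(k))^{1/2}$), without accruing a factor polynomial in $N$ that would break the claimed estimate. The single-sum statement $E\max|\sum_{k=m}^n\mu(k)|^{2M} \leq C(N-m)^M$ drops out of the same argument applied with $\eta_k = \mu(k)$ and no inner sum, and the vector version follows componentwise using $|x|^{2M} \leq d^{2M-1}\sum_i |x_i|^{2M}$.
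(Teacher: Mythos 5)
Your high-level plan (rewrite $\bbR$ as a single-index sum $\sum_k\eta_k$ and feed it to Lemma~\ref{lem3.3}, with $A_{2M}$ controlled by a near/far split and $\phi$-mixing) is the right shell, and it is close in spirit to the paper's argument. However, the single-scale approximation step contains a gap that is not just bookkeeping and cannot be fixed within your framework.

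The obstruction is your choice of a \emph{single} smoothing scale $r$ that grows with $N$. You need $\rho(r)$ small enough that the substitution error $\zeta\mapsto\tilde\zeta_r,\ \mu\mapsto\tilde\mu_r$ is negligible against $(N-m)^{M}\max_k(k-\ell(k))^{M}$, which under the $\sum n\rho(n)<\infty$ hypothesis forces $r$ to be polynomially large in $N$. But then, in the $A_{2M}$ sum, the range $i\le k\le i+2r$ contains $\sim r$ indices for which $E(\tilde\eta_k^r\mid\cH_i)$ carries \emph{no} $\phi$-decorrelation and must be bounded crudely by $\|\tilde\eta_k^r\|_{2M}\sim(k-\ell(k))^{1/2}$. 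This alone gives $A_{2M}\gtrsim r\,\max_k(k-\ell(k))^{1/2}$, and plugging this into Lemma~\ref{lem3.3} yields an extra factor $r^{2M}$ — polynomial in $N$ — which destroys the claimed bound. There is no way to trade the $r$ in the error estimate against the $r$ in the near-diagonal count at a single scale. The paper resolves this with a \emph{dyadic telescoping}: decompose $\zeta(k)=\zeta_1(k)+\sum_{q\ge1}(\zeta_{2^q}(k)-\zeta_{2^{q-1}}(k))$ (and likewise for $\mu$), treat each $(q,r)$-block separately with near-diagonal width $\asymp\max(2^q,2^r)$, and observe that the matching $L^\infty$ bound $\|\zeta_{2^q}(k)-\zeta_{2^{q-1}}(k)\|_\infty\lesssim\rho(2^{q-1})$ makes the block-wise near-diagonal contribution of order $\max(2^q,2^r)\,\rho(2^{q-1})\rho(2^{r-1})$, which is summable over $q,r$ precisely because $\sum n\rho(n)<\infty$. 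The multi-scale structure is essential, not cosmetic.

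A secondary, more minor gap: you invoke ``Cauchy--Schwarz with the uniformly bounded factor $\tilde\mu_r(k)$,'' but under the lemma's hypotheses $\mu(k)$ (hence $\tilde\mu_r(k)$) is only assumed to have finite $2M$-th moment, not to be a.s.\ bounded; $|\mu(k)-E(\mu(k)\mid\cF_{k-n,k+n})|\le\rho(n)$ bounds the tail beyond scale $n$, not $\mu(k)$ itself. This is why, in the paper's telescoping, the scale-$0$ slice ($\mu_1(k)$) is handled in $L^{2M}$ rather than $L^\infty$; only the higher-frequency increments $\mu_{2^r}-\mu_{2^{r-1}}$, $r\ge1$, enjoy the $L^\infty$ bound. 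In your scheme you would need to replace ``bounded'' by a H\"older/Cauchy--Schwarz split in $L^{4M}$, which is fixable but should be stated.

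The reductions at the end (single sums via $\ell(k)=k-1$, vector version via $|x|^{2M}\le d^{2M-1}\sum_i|x_i|^{2M}$) are fine and match the paper.
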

 \begin{proof}
 Set
 \[
 \zeta_r(k)=E(\zeta(k)|\cF_{k-r,k+r})\,\,\mbox{and}\,\, \mu_r(k)=E(\mu(k)|\cF_{k-r,k+r}).
 \]
 Then
 \[
 \zeta(k)=\lim_{i\to\infty}\zeta_{2^i}(k)=\zeta_1(k)+\sum_{r=1}^\infty(\zeta_{2^r}(k)-\zeta_{2^{r-1}}(k))
 \]
 and
  \[
 \mu(k)=\lim_{i\to\infty}\mu_{2^i}(k)=\mu_1(k)+\sum_{r=1}^\infty(\mu_{2^r}(k)-\mu_{2^{r-1}}(k))
 \]
 where convergence is in the $L^\infty$ sense since
 \[
 \|\zeta_{2^r}(k)-\zeta_{2^{r-1}}(k)\|_\infty,\, \|\mu_{2^r}(k)-\mu_{2^{r-1}}(k)\|_\infty\leq 2(\rho(2^r)+\rho(2^{r-1})).
 \]
 For $q,r=0,1,...$ denote
 \begin{eqnarray*}
& \vrho_{q,r}(j,k)=(\zeta_{2^q}(j)-\zeta_{2^{q-1}}(j))(\mu_{2^r}(k)-\mu_{2^{r-1}}(k))\\
&-E\big((\zeta_{2^q}(j)-\zeta_{2^{q-1}}(j))(\mu_{2^r}(k)-\mu_{2^{r-1}}(k))\big)
 \end{eqnarray*}
 and $Q_{q,r}(k)=\sum_{j=\ell(k)}^{k-1}\vrho_{q,r}(j,k)$ where we set for convenience $\rho(2^{-1})=\sup_{k\geq 0}
 (\|\zeta(k)\|_{2M}+\|\mu(k)\|_{2M})$ and
 $\zeta_{2^{-1}}(j)=\mu_{2^{-1}}(k)=0$ for all $j,k\geq 0$. Then
 \[
 \bbR(\ell,m,n)=\sum_{q,r=0}^\infty\sum_{k=m}^nQ_{q,r}(k).
 \]
 Next, introduce $\cG_m=\cG_m^{q,r}=\cF_{-\infty,m+\max(2^q,2^r)}$ and observe that $Q_{q,r}(k)$ is $\cG_k$-measurable.
 We will apply Lemmas \ref{lem3.2} and \ref{lem3.3} to the sums $\bbR(\ell,m,n)$.

 First, write
 \[
 Q_{q,r}(k)=Q_{q,r,n}^{(1)}(k)+Q_{q,r,n}^{(2)}(k)+Q_{q,r,n}^{(3)}(k)
 \]
 where
 \[
 Q_{q,r,n}^{(1)}(k)=\sum_{\ell(k)\leq j<\frac {k+n}2-2\max(2^q,2^r),\, j<k}\vrho_{q,r}(j,k),
 \]
  \[
 Q_{q,r,n}^{(2)}(k)=\sum_{\frac {k+n}2-2\max(2^q,2^r)\leq j<\frac {k+n}2+2\max(2^q,2^r),\, j<k}\vrho_{q,r}(j,k),
 \]
 \[
 \mbox{and}\,\,\,  Q_{q,r,n}^{(3)}(k)=\sum_{\frac {k+n}2+2\max(2^q,2^r)\leq j<k}\vrho_{q,r}(j,k).
 \]
 If $k-n\geq 4\max(2^q,2^r)$ then $\frac {k+n}2-\max(2^q,2^r)-n-\max(2^q,2^r)\geq 0$ and we can write
 \begin{eqnarray*}
 &\| E(Q^{(1)}_{q,r}(k)|\cG_n)\|_{2M}\leq 2\| E\big(E(\mu_{2^r}(k)-\mu_{2^{r-1}}(k)|\cF_{-\infty,\frac {k+n}2-\max(2^q,2^r)})\\
&\times \sum_{\ell(k)\leq j<\frac {k+n}2-2\max(2^q,2^r),\, j<k}(\zeta_{2^q}(j)-\zeta_{2^{q-1}}(j))|\cG_n\big)\|_{2M}\\
&\leq 2\| E(\mu_{2^r}(k)-\mu_{2^{r-1}}(k)|\cF_{-\infty,\frac {k+n}2-\max(2^q,2^r)})\\
&\times \sum_{\ell(k)\leq j<\frac {k+n}2-2\max(2^q,2^r),\, j<k}(\zeta_{2^q}(j)-\zeta_{2^{q-1}}(j))\|_{2M}\\
 &\leq 8\phi(\frac {k-n}2)(\rho(2^r)+\rho(2^{r-1}))\|\sum_{\ell(k)\leq j<\frac {k+n}2-2\max(2^q,2^r),\, j<k}
 (\zeta_{2^q}(j)-\zeta_{2^{q-1}}(j))\|_{2M}
 \end{eqnarray*}
 where we use (\ref{2.1}), (\ref{2.2}) and that $k-2^r-\frac {k+n}2+\max(2^q,2^r)\geq \frac {k-n}2$. If $0\leq k-n<4\max(2^q,2^r)$
 then we estimate
  \begin{eqnarray*}
 &\| E(Q^{(1)}_{q,r}(k)|\cG_n)\|_{2M}\\
 &\leq 4(\rho(2^r)+\rho(2^{r-1}))\|\sum_{\ell(k)\leq j<\frac {k+n}2-2\max(2^q,2^r),\, j<k}(\zeta_{2^q}(j)-\zeta_{2^{q-1}}(j))\|_{2M}.
 \end{eqnarray*}

 In order to bound the $2M$-moment norm of the last sum we will use Lemma \ref{lem3.2} setting $\cH_i=\cF_{-\infty,i+2^q}$
 and relying on (\ref{2.1})-(\ref{2.2}) we estimate for $j\geq i+2^{q+1}$,
 \[
 \| E(\zeta_{2^q}(j)-\zeta_{2^{q-1}}(j)|\cH_i)\|_{2M}\leq 4(\rho(2^q)+\rho(2^{q-1}))\phi(j-i-2^{q+1}).
 \]
 For $i\leq j< i+2^{q+1}$ we use just the obvious estimate
  \[
 \| E(\zeta_{2^q}(j)-\zeta_{2^{q-1}}(j)|\cH_i)\|_{2M}\leq 2(\rho(2^q)+\rho(2^{q-1})).
 \]
 Hence,
 \[
 A^\zeta_{2M}=\sup_{ i\geq 0}\sum_{j\geq i}\| E(\zeta_{2^q}(j)-\zeta_{2^{q-1}}(j)|\cH_i)\|_{2M}
 \leq 2(\rho(2^q)+\rho(2^{q-1}))(2^{q+1}+2\sum_{i=0}^\infty\phi(i)),
 \]
 and so by Lemma \ref{lem3.2},
 \begin{eqnarray*}
 &\|\sum_{\ell(k)\leq j<\frac {k+n}2-2\max(2^q,2^r),\, j<k}(\zeta_{2^q}(j)-\zeta_{2^{q-1}}(j))\|_{2M}\\
 &\leq (3(2M)!)^{1/2M}\sqrt dA^\zeta_{2M}\max_{m\leq k\leq n}\sqrt {k-\ell(k)}.
 \end{eqnarray*}

 Next, by (\ref{2.1}) and (\ref{2.2}),
 \begin{eqnarray*}
 &|E(Q^{(2)}_{q,r,n}(k)|\cG_n)|\leq \sum_{\frac {k+n}2-2\max(2^q,2^r)\leq j<\frac {k+n}2+2\max(2^q,2^r),\, j<k}|E(\vrho_{q,r}(j,k)|\cG_n)|\\
 &\leq 64\max(2^q,2^r)\phi(\frac {k-n}2-4\max(2^q,2^r))(\rho(2^r)+\rho(2^{r-1}))(\rho(2^q)+\rho(2^{q-1}))
 \end{eqnarray*}
 since each $\vrho_{q,r}(j,k)$ here is $\cF_{\frac {k+n}2-3\max(2^q,2^r),\infty}$-measurable, $\frac {k+n}2-3\max(2^q,2^r)-n-\max(2^q,2^r)=\frac {k-n}2-4\max(2^q,2^r)$
  and we take here $\phi(x)=1$ for any $x\leq 0$.
 Using again (\ref{2.1}) and (\ref{2.2}) we have also
 \begin{eqnarray*}
 & |E(Q^{(3)}_{q,r,n}(k)|\cG_n)|\leq\sum_{\frac {k+n}2+2\max(2^q,2^r)\leq j<k}|E(\vrho_{q,r}(j,k)|\cG_n)|\\
 &\leq 8(\rho(2^r)+\rho(2^{r-1}))(\rho(2^q)+\rho(2^{q-1}))(k-n)\phi(\frac {k-n}2)
 \end{eqnarray*}
  since each $\vrho_{q,r}(j,k)$ here is $\cF_{\frac {k+n}2+\max(2^q,2^r),\infty}$-measurable and $\frac {k+n}2+\max(2^q,2^r)-n-\max(2^q,2^r)=\frac {k-n}2$.

 Hence, by Lemmas \ref{lem3.2} and \ref{lem3.3},
 \[
 \|\max_{m\leq n\leq N}|\sum_{k=m}^nQ_{q,r}(k)|\|_{2M}\leq 2A^{q,r}_{2M}\big(3(2M)!d^M(N-m)^M+N-m\big)^{1/2M}
 \]
 where by the above
 \begin{eqnarray*}
 &A^{q,r}_{2M}=\sup_{0\leq n\leq N}\sum_{k\geq n}\| E(Q_{q,r}(k)|\cG_n)\|_{2M}\\
 &\leq 64(\rho(2^r)+\rho(2^{r-1}))(\rho(2^q)+\rho(2^{q-1}))\\
 &\times\big( (3(2M)!)^{1/2M}\sqrt d(\sum^\infty_{i=0}\phi(i)+\max(2^q,2^r))^2\max_{m\leq j\leq N}\sqrt {j-\ell(j)}\\
 &+\max(2^q,2^r)(4\max(2^q,2^r)+\sum^\infty_{i=0}\phi(i))+\sum^\infty_{i=1}i\phi(i)\big).
 \end{eqnarray*}
 This together with
 \[
 \|\max_{m\leq n\leq N}|\bbR(\ell,m,n)|\|_{2M}\leq\sum_{q,r=0}^\infty\|\max_{m\leq n\leq N}|\sum_{k=m}^nQ_{q,r}(k)|\|_{2M}
 \]
 yields (\ref{3.4}) completing the proof of the lemma.
 \end{proof}

 \subsection{Characteristic functions estimates}\label{subsec3.2}
 Next, we will follow the same path as in \cite{BP} which leads to strong approximation (almost sure invariance
 principle) theorem. For each $n\geq 1$ introduce the characteristic function
 \[
 f_n(w)=E\exp(i\langle w,\, n^{-1/2}\sum_{k=0}^{n-1}\xi(k)\rangle),\, w\in\bbR^e
 \]
 where $\langle\cdot,\cdot\rangle$ denotes the inner product. In the same way as in
  Lemma 3.10 from \cite{Ki21} we obtain
 \begin{lemma}\label{lem3.5}
 For any $n\geq 1$,
 \begin{equation}\label{3.5}
 |f_n(w)-\exp(-\frac 12\langle \vs w,\, w\rangle)|\leq C_2e^3n^{-\wp}
 \end{equation}
 for all $w\in\bbR^e$ with $|w|\leq n^{\wp/2}$ where we can take $\wp\leq\frac 1{20}$
 and a constant $C_2>0$ independent of $n$ and $e$.
 \end{lemma}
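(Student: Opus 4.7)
The plan is to prove this characteristic function estimate by the classical Bernstein big-block/small-block technique, adapted to the $\phi$-mixing vector setting and augmented with the $\rho$-approximation device that the paper's non-$\cF_{nn}$-measurability of $\xi(n)$ forces. Partition $\{0,\dots,n-1\}$ into alternating big blocks of length $p = \lfloor n^{\alpha}\rfloor$ and small blocks of length $q = \lfloor n^{\beta}\rfloor$ with $0<\beta<\alpha<1$, and let $\mfB_j$ denote the sum of $\xi(k)$ over the $j$-th big block. Write $n^{-1/2}\sum_k \xi(k) = Y + Z$, where $Y = n^{-1/2}\sum_j \mfB_j$ and $Z$ is the analogous normalized small-block sum. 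The elementary inequality $|f_n(w) - Ee^{i\langle w, Y\rangle}| \leq |w|\,\|Z\|_2$, together with the variance estimate $\|Z\|_2^2 \leq Cd\,q/p$ obtained from Lemma \ref{lem3.4} applied to the concatenated small blocks, reduces matters to the characteristic function of $Y$ at a cost of $C|w|\sqrt d\, n^{(\beta - \alpha)/2}$.

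Next I would replace each $\xi(k)$ contributing to $\mfB_j$ by $\xi_r(k) := E(\xi(k)\mid\cF_{k-r, k+r})$ with $r = \lfloor q/3\rfloor$, introducing a total $L^\infty$-error at most $n^{1/2}\rho(r)$ in the characteristic function. The modified big-block sums $\widetilde{\mfB}_j$ are then measurable with respect to $\sigma$-algebras separated by gaps of size $\geq q - 2r \sim n^\beta$, so successive applications of Lemma \ref{lem3.1} stripping one block at a time yield
\[
\bigl|Ee^{i\langle w, n^{-1/2}\sum_j \widetilde{\mfB}_j\rangle} - \prod_j Ee^{i\langle w, n^{-1/2}\widetilde{\mfB}_j\rangle}\bigr| \leq C\,(n/p)\,\phi(q/3),
\]
which is polynomially small in $n$ by (\ref{2.12}).

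I would then Taylor-expand each factor. Since $|\widetilde{\mfB}_j|\leq Lp$ almost surely and $E|\widetilde{\mfB}_j|^2 \leq Cdp$ by Lemma \ref{lem3.2}, interpolation between the $L^2$ and $L^4$ bounds from Lemma \ref{lem3.2} yields $E|\widetilde{\mfB}_j|^3 \leq Cd^{3/2} p^{3/2}$, so that
\[
Ee^{i\langle w, n^{-1/2}\widetilde{\mfB}_j\rangle} = 1 - \tfrac{1}{2n}E\langle w, \widetilde{\mfB}_j\rangle^2 + R_j, \qquad |R_j| \leq Cd^{3/2}|w|^3 p^{3/2}n^{-3/2}.
\]
Applying $|\prod(1 - a_j) - \exp(-\sum a_j)| \leq \sum|a_j|^2 + \sum|R_j|$ and using the covariance formula (\ref{2.8}) to rewrite $n^{-1}\sum_j E\langle w, \widetilde{\mfB}_j\rangle^2$ as $\langle \vs w, w\rangle$ up to a mixing-tail error, one arrives at $\exp(-\tfrac{1}{2}\langle \vs w, w\rangle)$ modulo a cubic remainder of order $d^{3/2}|w|^3 p^{1/2} n^{-1/2}$ plus lower-order terms.

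The main step is the simultaneous balancing of these four error sources. Writing each as a power of $n$ and substituting $|w|\leq n^{\wp/2}$, one obtains linear inequalities in $\alpha,\beta,\wp$ (small block: $\alpha - \beta \geq 3\wp$; $\rho$-error: $3\beta \geq 1/2 + \wp$; mixing: $\alpha + 3\beta \geq 1 + \wp$; cubic: $\alpha/2 + 3\wp/2 - 1/2 \leq -\wp$) whose feasibility region is nonempty precisely for $\wp$ below a threshold, which the careful optimization of Lemma 3.12 in \cite{Ki21} pins down at $1/20$. The main obstacle is therefore not conceptual but technical: balancing the cubic Taylor error against the polynomial mixing tail from (\ref{2.12}), and tracking the dimensional prefactor through each application of Lemmas \ref{lem3.1} and \ref{lem3.2} to produce the stated $d^3$ constant with crude constants.
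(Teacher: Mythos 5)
Your proposal is a correct and structurally sound sketch of the argument; note, however, that the paper does not actually prove Lemma~\ref{lem3.5} but defers entirely to Lemma~3.12 of \cite{Ki21}, so what you have reconstructed is (presumably) that cited proof rather than anything original to the present paper. The Bernstein big-block/small-block decomposition, the replacement of each $\xi(k)$ by its conditional expectation $E(\xi(k)\mid\cF_{k-r,k+r})$ to force block-measurability at $L^\infty$-cost $O(n^{1/2}\rho(r))$, the iterated use of Lemma~\ref{lem3.1} to decouple the blocks with total error $O((n/p)\phi(q/3))$, and the third-order Taylor expansion controlled by $L^2$--$L^4$ interpolation from Lemma~\ref{lem3.2} are exactly the ingredients one expects; your four balancing inequalities in $(\alpha,\beta,\wp)$ correctly encode the respective error magnitudes, and they are mutually feasible at $\wp=1/20$ (e.g.\ $\beta=1/5$, $\alpha\in[1/2,3/4]$ works). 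The one place worth being pedantic is the second-moment step: after discarding the small blocks and after the $\rho$-truncation you must argue that $n^{-1}\sum_j E\langle w,\widetilde{\mfB}_j\rangle^2\to\langle\vs w,w\rangle$ with an explicit polynomial rate, which requires both the absolute summability of the covariance tail supplied by (\ref{2.12}) (your estimates (\ref{4.21})--(\ref{4.22}) play this role) and the observation that the discarded cross-block covariances are uniformly summable; you gesture at this but it is where the $d$-bookkeeping actually produces the stated $d^3$ prefactor, and it is slightly subtler than your ``mixing-tail error'' phrase suggests. These are all details the cited reference would carry, so your blind reconstruction is, to the extent I can check it against a paper that only cites, essentially the intended argument.
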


  Next, we follow \cite{BP} and \cite{KP} introducing blocks of high polynomial power length with gaps between
 them. Set $m_0=0$ and recursively $n_k=m_{k-1}+[k^\be],\, m_k=n_k+[k^{\be/4}],\, k=1,2,...$ where $\be>0$ is
 big and will be chosen later on. Now we define sums
 \begin{eqnarray*}
 &Q_k=\sum_{m_{k-1}\leq j<n_k}E\big(\xi(j)|\cF_{j-\frac 13[(k-1)^{\be/4}],j+\frac 13[(k-1)^{\be/4}]}\big)\\
 &\mbox{and}\quad R_k=\sum_{n_k\leq j<m_k}\xi(j),\, k=1,2,...
 \end{eqnarray*}
where the first sums play the role of blocks while the second ones are gaps whose total contribution turns out to
be negligible for our purposes. Set also $\ell_N(t)=\max\{ k:\, m_k\leq Nt\}$ and $\ell_N=\ell_N(T)$.

\begin{lemma}\label{lem3.6} With probability one for all $N\geq 1$,
\begin{equation}\label{3.6}
\sup_{0\leq t\leq T}|S_N(t)-N^{-1/2}\sum_{1\leq k\leq\ell_N(t)}Q_k|=O(N^{-\del})
\end{equation}
provided $\del>0$ is small and $\be>0$ is large enough.
\end{lemma}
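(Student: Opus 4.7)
The plan is to decompose the error into three contributions and bound each one in sup-norm by a common negative power of $N$. Write
\begin{equation*}
\sqrt N\big(S_N(t)-N^{-1/2}\sum_{1\leq k\leq\ell_N(t)}Q_k\big)=\sum_{k=1}^{\ell_N(t)}D_k+\sum_{k=1}^{\ell_N(t)}R_k+T_N(t),
\end{equation*}
where
\[
D_k=\sum_{m_{k-1}\leq j<n_k}\Big(\xi(j)-E\big(\xi(j)\,\big|\,\cF_{j-\frac 13[(k-1)^{\be/4}],j+\frac 13[(k-1)^{\be/4}]}\big)\Big)
\]
is the conditioning error within block $k$, $R_k$ is the gap sum, and $T_N(t)=\sum_{m_{\ell_N(t)}\leq j<[Nt]}\xi(j)$ is the terminal partial block. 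Since $m_k\sim k^{\be+1}/(\be+1)$, we have $\ell_N=\ell_N(T)=O(N^{1/(\be+1)})$.

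First I dispose of the $D$ and $R$ contributions by purely deterministic estimates. Hypothesis (\ref{2.12}) yields $\rho(n)=O(n^{-3})$, so $|D_k|\leq L[k^\be]\rho(\tfrac 13[(k-1)^{\be/4}])=O(k^{\be/4})$ almost surely; the uniform bound (\ref{2.6}) gives $|R_k|\leq L[k^{\be/4}]$. Summing via the triangle inequality,
\begin{equation*}
\sup_{0\leq t\leq T}\Big|\sum_{k=1}^{\ell_N(t)}D_k\Big|+\sup_{0\leq t\leq T}\Big|\sum_{k=1}^{\ell_N(t)}R_k\Big|=O\big(\ell_N^{\be/4+1}\big)=O\big(N^{(\be/4+1)/(\be+1)}\big),
\end{equation*}
which after dividing by $\sqrt N$ is $O(N^{-\del_1})$ for some $\del_1>0$ as soon as $(\be/4+1)/(\be+1)<1/2$, i.e.\ $\be>2$.

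Only the terminal residual needs a genuinely probabilistic argument. For $t\in[m_{k-1}/N,m_k/N)$ one has $T_N(t)=\sum_{m_{k-1}\leq j<[Nt]}\xi(j)$, so Lemma \ref{lem3.4} applied on each block gives
\[
X_k:=\Big\|\max_{m_{k-1}\leq n\leq m_k}\Big|\sum_{m_{k-1}\leq j<n}\xi(j)\Big|\Big\|_{2M}=O(k^{\be/2}).
\]
Since $\sup_t|T_N(t)|^{2M}\leq\sum_{k\leq\ell_N+1}X_k^{2M}$ for these non-negative block maxima, taking expectations yields $\|\sup_t|T_N(t)|\|_{2M}^{2M}=O(\ell_N^{M\be+1})$, so $\|N^{-1/2}\sup_t|T_N(t)|\|_{2M}=O(N^{-\del_3})$ with $\del_3=(M-1)/(2M(\be+1))>0$ as soon as $M\geq 2$. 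For any $\del\in(0,\del_3)$, Markov's inequality combined with the Borel--Cantelli lemma (choosing $M$ so large that $2M(\del_3-\del)>1$) converts this into an almost-sure bound $N^{-1/2}\sup_t|T_N(t)|=O(N^{-\del})$ valid for all $N\geq 1$. Combining the three pieces yields (\ref{3.6}) with $\del=\min(\del_1,\del_3)/2$.

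The main obstacle is simply coordinating the exponents: $\be$ must be chosen large enough to push the deterministic exponent $(\be/4+1)/(\be+1)$ strictly below $1/2$, while enlarging $\be$ simultaneously shrinks $\del_3=(M-1)/(2M(\be+1))$; once $\be$ is fixed, $M$ is taken large enough for Borel--Cantelli. The resulting $\del>0$ is independent of $N$, as required.
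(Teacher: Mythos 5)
Your proof is correct and uses the same block/gap/terminal-residual decomposition as the paper's (there written $I_1,I_2,I_3$). The one substantive departure is your treatment of the gap sums: you bound $|R_k|\leq L[k^{\be/4}]$ deterministically via $\|\xi(0)\|_\infty\leq L$ from (\ref{2.6}) and sum by the triangle inequality, whereas the paper exploits cancellation via Lemma \ref{lem3.4} together with Chebyshev and Borel--Cantelli. Your route is more elementary but yields the cruder exponent $N^{(\be/4+1)/(\be+1)-1/2}$, which is negative only once $\be>2$; the paper's probabilistic treatment of $I_2$ works for any $\be>0$. This extra constraint is harmless since the global argument later fixes $\be>160d$. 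Similarly, for the within-block conditioning error your summation gives the exponent $(\be/4+1)/(\be+1)$ rather than the $1/(\be+1)$ appearing in (\ref{3.7}); that weaker exponent is in fact what (\ref{2.12}) supports, since the paper's constant $C_3=\max_k k^\be\rho(\tfrac 13[(k-1)^{\be/4}])$ is finite only under a decay rate $\rho(n)=O(n^{-4})$ slightly stronger than the $n^{-3}$ guaranteed by (\ref{2.12}) --- but again both exponents are negative for $\be>2$, so the conclusion is unaffected. Your terminal-block estimate (block maxima via Lemma \ref{lem3.4}, then Chebyshev and Borel--Cantelli with $M$ chosen after $\be$ and $\del$) tracks the paper's treatment of $I_3$ closely, and you correctly handle the circularity $\del_3=\del_3(M)$ by noting $\del_3\uparrow\tfrac{1}{2(\be+1)}$ as $M\to\infty$.
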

\begin{proof}
Denote the left hand side of (\ref{3.6}) by $I$, then
\[
I\leq\sup_{0\leq t\leq T}I_1(t)+\sup_{0\leq t\leq T}I_2(t)+\sup_{0\leq t\leq T}I_3(t).
\]
Here,
\begin{eqnarray}\label{3.7}
&I_1(t)=N^{-1/2}\big\vert\sum_{1\leq k\leq\ell_N(t)}\big(\sum_{m_{k-1}\leq j<n_k}(\xi(j)\\
&-E(\xi(j)|\cF_{j-\frac 13[(k-1)^{\be/4}],j+\frac 13[(k-1)^{\be/4}]})\big)\big\vert\nonumber\\
&\leq N^{-1/2}\sum_{1\leq k\leq\ell_N(t)}k^\be\rho(\frac 13[(k-1)^{\be/4}])\leq C_3N^{-(\frac 12-\frac 1{\be+1})}\nonumber
\end{eqnarray}
since
\begin{equation}\label{3.8}
(\frac {TN}2)^{\frac 1{\be+1}}-2\leq\ell_N\leq (TN)^{\frac 1{\be+1}}
\end{equation}
and by (\ref{2.12}),
\[
C_3=\max_{k\geq 1}(k^\be\rho(\frac 13[(k-1)^{\be/4}]))<\infty.
\]
It remains to estimate
\begin{eqnarray*}
&I_2(t)=N^{-1/2}\big\vert\sum_{1\leq k\leq\ell_N(t)}\sum_{n_{k}\leq j<m_k}\xi(j)\big\vert\\
&\mbox{and}\,\,\, I_3(t)=N^{-1/2}\big\vert\sum_{m_{\ell_N(t)}\leq j<[Nt]}\xi(j)\big\vert .
\end{eqnarray*}

By (\ref{3.8}) and Lemma \ref{lem3.4} with $C_1(M)=C_1^\xi(M)$,
\begin{eqnarray*}
&E\sup_{0\leq t\leq T}I_2^{2M}(t)\leq N^{-M}\sum_{0\leq l\leq\ell_N}E\big\vert\sum_{1\leq k\leq l}\sum_{n_{k}\leq j<m_k}\xi(j)\big\vert^{2M}\\
&\leq C_1(M)N^{-M}\sum_{0\leq l\leq\ell_N}(\sum_{1\leq k\leq l}k^{\be/4})^M\leq C_4(M)N^{-\frac 14(3-\frac 7{\be+1})M+1}
\end{eqnarray*}
where $C_4(M)=C_1(M)3^{(\frac \be 4+2)M+1}$. By the Chebyshev inequality
\begin{equation}\label{3.9}
P\{\sup_{0\leq t\leq T}I_2(t)>N^{-\del}\}\leq N^{2M\del}E\sup_{0\leq t\leq T}I_2^{2M}(t).
\end{equation}
Choosing $\be>0$ big and $\del>0$ small enough so that $\frac 7{\be+1}+8\del\leq 1$ and taking $M\geq 6$ we obtain that
the right hand side of (\ref{3.9}) is bounded by $N^{-2}$, and so by the Borel-Cantelli lemma
\begin{equation}\label{3.10}
\sup_{0\leq t\leq T}I_2(t)=O(N^{-\del})\quad\mbox{a.s.}
\end{equation}

Next,
\begin{eqnarray*}
&\sup_{0\leq t\leq T}I_3^{2M}(t)=N^{-M}\max_{1\leq k\leq\ell_N}\max_{m_k\leq j<m_{k+1}\wedge Nt}|\sum_{m_k\leq i
\leq j}\xi(i)|^{2M}\\
&\leq N^{-M}\sum_{1\leq k\leq\ell_N}\sum_{m_k\leq j<m_{k+1}}|\sum_{m_k\leq i\leq j}\xi(i)|^{2M}.\\
\end{eqnarray*}
By Lemma \ref{lem3.4} with $C_1(M)=C_1^\xi(M)$,
\[
E|\sum_{m_k\leq i\leq j}\xi(i)|^{2M}\leq C_1(M)(j-m_k+1)^M,
\]
and so by (\ref{3.8}),
\[
E\sup_{0\leq t\leq T}I_3^{2M}(t)\leq C_1(M)N^{-M}\sum_{k=1}^{\ell_N}(m_{k+1}-m_k+1)^{M+1}\leq C_5(M)N^{-\frac M{\be+1}+1}
\]
where $C_5(M)=C_1(M)3^{M+1}$. By the Chebyshev inequality
\begin{equation}\label{3.11}
P\{\sup_{0\leq t\leq T}I_3(t)>N^{-\del}\}\leq N^{2M\del}E\sup_{0\leq t\leq T}I_3^{2M}(t).
\end{equation}
Choosing $\del\leq\frac 1{4(\be+1)}$ and $M\geq 12(\be+1)$ we bound the right hand side of (\ref{3.11}) by $N^{-2}$ which
together with the Borel-Cantelli lemma yields that
\begin{equation}\label{3.12}
\sup_{0\leq t\leq T}I_3(t)=O(N^{-\del})\quad\mbox{a.s.}
\end{equation}
Finally, (\ref{3.6}) follows from (\ref{3.7}), (\ref{3.10}) and (\ref{3.12}).
\end{proof}

Next, set
\begin{equation}\label{3.13}
\cG_k=\cF_{-\infty,n_k+\frac 13[k^{\be/4}]},
\end{equation}
and so $Q_k$ is $\cG_k$-measurable. The following result is a corollary of Lemmas \ref{lem3.1} and \ref{lem3.5}.
\begin{lemma}\label{lem3.7} For any $k\geq 1$,
\begin{eqnarray}\label{3.14}
&|E(\exp(i\langle w,\,(n_k-m_{k-1})^{-1/2}Q_k\rangle |\cG_{k-1})-\exp(-\frac 12\langle\vs w,w\rangle)|\\
&\leq 2\phi(\frac 13[(k-1)^{\be/4}])+\rho(\frac 13[(k-1)^{\be/4}])+C_2d^3[k^\be]^{-\wp}\nonumber
 \end{eqnarray}
 for all $w\in\bbR^e$ with $|w|\leq(n_k-m_{k-1})^{\wp/2}$.
 \end{lemma}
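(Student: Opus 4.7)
The plan is to chain three ingredients: decoupling via Lemma \ref{lem3.1}, a stationarity/approximation step that replaces the truncated block $Q_k$ by a genuine partial sum, and then the characteristic-function bound of Lemma \ref{lem3.5} applied to that sum. Set $n=n_k-m_{k-1}=[k^\be]$ and $r=\tfrac{1}{3}[(k-1)^{\be/4}]$, and write $\vf_k(w)=\exp\!\bigl(i\langle w,n^{-1/2}Q_k\rangle\bigr)$.

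First, observe that each summand of $Q_k$ is, by construction, measurable with respect to $\cF_{j-r,j+r}$, so $Q_k$ (and hence $\vf_k(w)$) is measurable with respect to $\cH:=\cF_{m_{k-1}-r,\,n_k+r}=\cF_{n_{k-1}+\frac{2}{3}[(k-1)^{\be/4}],\,n_k+r}$. Comparing with $\cG_{k-1}=\cF_{-\infty,\,n_{k-1}+\frac{1}{3}[(k-1)^{\be/4}]}$, the gap is at least $\tfrac{1}{3}[(k-1)^{\be/4}]$, which is of the order of $\tfrac{1}{3}[k^{\be/4}]$. Since $|\vf_k(w)|\le 1$, Lemma \ref{lem3.1} yields
\[
\bigl|E(\vf_k(w)\mid\cG_{k-1})-E\vf_k(w)\bigr|\le 2\phi(\cG_{k-1},\cH)\le 2\phi(\tfrac{1}{3}[k^{\be/4}]),
\]
absorbing the mild index shift between $k$ and $k-1$ into the first term on the right-hand side of (\ref{3.14}).

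Next, by stationarity the unconditional law of $Q_k$ coincides with that of $\tilde Q=\sum_{j=0}^{n-1}E(\xi(j)|\cF_{j-r,j+r})$. Write $\tilde S_n=\sum_{j=0}^{n-1}\xi(j)$; then by definition (\ref{2.3}),
\[
\bigl\|\tilde Q-\tilde S_n\bigr\|_\infty\le\sum_{j=0}^{n-1}\|\xi(j)-E(\xi(j)|\cF_{j-r,j+r})\|_\infty\le n\,\rho(r).
\]
Using $|e^{ix}-e^{iy}|\le|x-y|$ and the bound $|w|\le n^{\wp/2}$, the CF difference satisfies
\[
\bigl|E e^{i\langle w,n^{-1/2}\tilde Q\rangle}-E e^{i\langle w,n^{-1/2}\tilde S_n\rangle}\bigr|\le|w|\,n^{1/2}\rho(r)\le n^{\wp/2+1/2}\rho(r).
\]
Hypothesis (\ref{2.12}) gives $\rho(r)=O(r^{-3})=O(n^{-3/4})$ since $r\asymp n^{1/4}$, so this error is $O(n^{\wp/2-1/4})$; with $\wp\le\tfrac{1}{20}$ this is majorised by $C_2d^3 n^{-\wp}$. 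Finally, Lemma \ref{lem3.5} applied to $\tilde S_n$ on the range $|w|\le n^{\wp/2}=(n_k-m_{k-1})^{\wp/2}$ gives
\[
\bigl|E e^{i\langle w,n^{-1/2}\tilde S_n\rangle}-\exp(-\tfrac{1}{2}\langle\vs w,w\rangle)\bigr|\le C_2 d^3 n^{-\wp}.
\]
Chaining the three estimates yields (\ref{3.14}).

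The only genuine issue is verifying that the approximation step is subsumed by the two explicit terms in the lemma: the triangle inequality produces an extra $O(n^{\wp/2+1/2}\rho(r))$ contribution, and this is precisely why $\wp$ must be chosen small enough (compatibly with the $\wp\le 1/20$ arising from Lemma \ref{lem3.5}) relative to the polynomial decay of $\rho$ assumed in (\ref{2.12}). Everything else is bookkeeping and stationarity.
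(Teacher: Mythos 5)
Your chain — decouple via Lemma \ref{lem3.1}, reduce by stationarity, then invoke the characteristic-function estimate of Lemma \ref{lem3.5} — is the same as the paper's, but you are noticeably more careful in the middle step. The paper's proof simply asserts $EF_k=f_{n_k-m_{k-1}}(w)$, but $Q_k$ is a sum of conditional expectations $E(\xi(j)\mid\cF_{j-r,j+r})$, not of the raw $\xi(j)$, so this identity is only approximate. You explicitly insert the $L^\infty$-estimate $\|\tilde Q-\tilde S_n\|_\infty\le n\rho(r)$, turn it into a characteristic-function error of size $n^{\wp/2+1/2}\rho(r)$, and verify via~(\ref{2.12}) and $r\asymp n^{1/4}$ that this is $O(n^{\wp/2-1/4})\ll n^{-\wp}$ when $\wp\le 1/20$. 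That fills a genuine elision in the paper's argument, so your version is in fact the more complete one.

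Two small bookkeeping points, both of which affect the paper's statement equally and neither of which is substantive. First, the gap between $\cG_{k-1}=\cF_{-\infty,\,n_{k-1}+\frac13[(k-1)^{\be/4}]}$ and the $\sigma$-algebra supporting $Q_k$ is $\frac13[(k-1)^{\be/4}]$, not $\frac13[k^{\be/4}]$, so the correctly derived bound is $2\phi(\frac13[(k-1)^{\be/4}])$; since $\phi$ is non-increasing and $(k-1)^{\be/4}<k^{\be/4}$, this is a priori the larger quantity, so you cannot simply ``absorb the index shift'' — but as $(k-1)^{\be/4}\asymp k^{\be/4}$ this is immaterial downstream and is presumably a small inaccuracy in the lemma as stated. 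Second, the approximation error and the Lemma~\ref{lem3.5} error are two contributions you are lumping into $C_2d^3n^{-\wp}$; strictly one should enlarge $C_2$ by a constant factor. With those caveats noted, your proof is correct.
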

 \begin{proof} Set
 \[
 F_k=\exp(i\langle w,\,(n_k-m_{k-1})^{-1/2}Q_k\rangle).
 \]
 Then by Lemma \ref{lem3.1},
 \[
 |E(F_k|\cG_{k-1})-EF_k|\leq 2\phi(\frac 13[(k-1)^{\be/4}]).
 \]
 Since $|e^{i(a+b)}-e^{ib}|\leq |a|$, we obtain from (\ref{2.3})
 taking into account the stationarity of $\xi(k)$'s that,
 \[
 |EF_k-f_{n_k-m_{k-1}}(w)|\leq |w|k^{\be/2}\rho(\frac 13[(k-1)^{\be/4}]),
 \]
 and (\ref{3.14}) follows from (\ref{3.5}).
 \end{proof}

\subsection{Strong approximations}\label{subsec3.3}

We will rely on the following result which is a version of Theorem 1 in \cite{BP} with some features
taken from Theorem 4.6 in \cite{DP} (see also Theorem 3 in \cite{MP}).
\begin{theorem}\label{thm3.8} Let $\{ V_k,\, k\geq 1\}$ be a sequence of random vectors with values in $\bbR^e$
defined on some probability space $(\Om,\cF,P)$ and such that $V_k$ is measurable with respect to $\cG_k,\, k=1,2,...$
where $\cG_k,\, k\geq 1$ is a filtration of countably generated sub-$\sig$-algebras of $\cF$. Assume that the probability
space is rich enough so that there exists on it a sequence of uniformly distributed on $[0,1]$ independent random variables
$U_k,\, k\geq 1$ independent of $\vee_{k\geq 1}\cG_k$. Let $G$ be a probability distribution on $\bbR^e$ with the characteristic
function $g$. Suppose that for some nonnegative numbers $\nu_m,\del_m$ and $K_m\geq 10^8d$,
 \begin{equation}\label{3.15}
 E\big\vert E(\exp(i\langle w,V_k\rangle)|\cG_{k-1})-g(w)\big\vert \leq\nu_k
 \end{equation}
 for all $w\in\bbR^e$ with $|w|\leq K_k$ and
 \begin{equation}\label{3.16}
 G\{ x:\, |x|\geq\frac 14K_k\}<\del_k.
 \end{equation}
 Then there exists a sequence $\{ W_k,\, k\geq 1\}$ of $\bbR^e$-valued random vectors defined on
 $(\Om,\cF,P)$ with the properties

 (i) $W_k$ is $\cG_k\vee\sig\{U_k\}$-measurable for each $k\geq 1$;

 (ii) each $W_k,\, k\geq 1$ has the distribution $G$ and $W_k$ is independent of $\sig\{ U_1,...,U_{k-1}\}\vee
 \cG_{k-1}$, and so also of $W_1,...,W_{k-1}$;

 (iii) Let $\vr_k=16K^{-1}_k\log K_k+2\nu_k^{1/2}K_k^d+2\del_k^{1/2}$. Then
 \begin{equation}\label{3.17}
 P\{ |V_k-W_k|\geq\vr_k\}\leq\vr_k.
 \end{equation}
 \end{theorem}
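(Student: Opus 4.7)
The plan is to follow the classical Berkes--Philipp conditional coupling scheme (as in \cite{BP}), where the sequence $W_k$ is built inductively in $k$ and the auxiliary uniform variable $U_k$ serves as a measurable randomization that converts a Fourier-level closeness between the conditional law of $V_k$ given $\cG_{k-1}$ and the target $G$ into a genuine $\omega$-wise near-identity coupling. Throughout, let $\mu_k(\omega,\cdot)$ denote a regular conditional distribution of $V_k$ given $\cG_{k-1}$ and $\hat\mu_k(\omega,\cdot)$ its characteristic function; hypothesis (\ref{3.15}) is precisely the estimate $E|\hat\mu_k(w)-g(w)|\le \nu_k$ for every $w$ with $|w|\le K_k$.

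The first analytic step is a conditional Esseen-type smoothing. By Fubini, $E\int_{|w|\le K_k}|\hat\mu_k-g|\,dw\le c_d\nu_k K_k^d$, so off a $\cG_{k-1}$-measurable set of probability at most $\nu_k^{1/2}$ one has $\int_{|w|\le K_k}|\hat\mu_k(\omega,w)-g(w)|\,dw\le c_d\nu_k^{1/2}K_k^d$. Convolving $\mu_k$ and $G$ with a smooth mollifier of scale $K_k^{-1}$ (e.g. Fej\'er or Beurling--Selberg) and applying Fourier inversion, the smoothed measures differ in total variation by $\lesssim \nu_k^{1/2}K_k^d$; the smoothing itself contributes an additional L\'evy-distance defect of order $K_k^{-1}\log K_k$, and the tail hypothesis (\ref{3.16}), together with a matching tail bound on $\mu_k$ valid off an exceptional event of probability $\le\del_k^{1/2}$, contributes a further $\del_k^{1/2}$. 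Combining these three pieces produces a Prokhorov-type estimate $d_{\mathrm{Prok}}(\mu_k(\omega,\cdot), G)\le \vr_k$ off a $\cG_{k-1}$-measurable event of probability at most $\vr_k$.

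The construction of $W_k$ is then completed by a measurable Strassen--Dudley coupling: for each $\omega$ there is a joint law $\pi_\omega$ on $\bbR^d\times\bbR^d$ with marginals $\mu_k(\omega,\cdot)$ and $G$ satisfying $\pi_\omega\{(v,w):|v-w|>\vr_k\}\le\vr_k$, and $\pi_\omega$ can be selected Borel measurably in $\omega$ using standard selection theorems for regular conditional distributions. Taking $W_k$ to be sampled from the $V_k$-conditional of $\pi_\omega$ realised via $U_k$ (the Skorokhod dial) yields a $\cG_k\vee\sig\{U_k\}$-measurable random vector that verifies (i) and (iii). Since the $W_k$-marginal of $\pi_\omega$ equals $G$ for every $\omega$ and $U_k$ is independent of $\cG_{k-1}\vee\sig\{U_1,\dots,U_{k-1}\}$, integrating out $U_k$ shows that the law of $W_k$ conditional on this $\sig$-algebra is the deterministic $G$, which gives both the unconditional distribution and the independence asserted in (ii). The main technical obstacle is the Fourier-to-Prokhorov step with explicit tracking of the dimension-dependent constants (whence the $K_k^d$ factor in $\vr_k$) together with the measurable selection of $\pi_\omega$; these are exactly the content of the Berkes--Philipp argument and we would only need to specialise it to the present hypotheses.
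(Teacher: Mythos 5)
The paper gives no proof of Theorem~\ref{thm3.8}: it is stated as a slight reformulation of Theorem~1 in Berkes--Philipp \cite{BP}, drawing also on Theorem~4.6 in Dehling--Philipp \cite{DP} and Theorem~3 in Monrad--Philipp \cite{MP}, and is then simply invoked. So there is no ``paper's own proof'' to measure your argument against; what you have produced is a summary of the Berkes--Philipp coupling scheme that underlies the cited result, and as a high-level sketch it is aimed in the right direction: regular conditional laws, a conditional Esseen smoothing inequality to convert the Fourier hypothesis \eqref{3.15} into a Prokhorov-distance bound on a $\cG_{k-1}$-measurable event, a measurably selected Strassen--Dudley coupling, and the uniform variable $U_k$ as the ``Skorokhod dial'' to realise it while preserving the exact marginal $G$ and the independence from $\cG_{k-1}\vee\sig\{U_1,\dots,U_{k-1}\}$.

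There is, however, one step in your sketch that is asserted rather than derived and is in fact the delicate part of the Berkes--Philipp argument. Hypothesis \eqref{3.16} controls only the tail of the \emph{target} law $G$; to get a Prokhorov bound you also need a tail bound for the \emph{conditional} law $\mu_k(\omega,\cdot)$, and you merely posit a ``matching tail bound on $\mu_k$ valid off an exceptional event of probability $\le\del_k^{1/2}$''. This does not come for free from \eqref{3.15}: Fourier closeness on a ball of radius $K_k$ controls the two distributions only after smoothing at scale $K_k^{-1}$, and one must argue that the smoothed $\mu_k$ inherits a tail estimate from the smoothed $G$ and then unwind the smoothing. In Berkes--Philipp this is done via an explicit Esseen-type inequality in which the $K_k^{-1}\log K_k$ and $\del_k^{1/2}$ terms both arise from tracking the mass that the smoothing kernel can move past the threshold $\tfrac14 K_k$; that computation, together with the normalisation $K_k\geq 10^8 d$, is precisely what produces the specific constants in $\vr_k$. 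As written, your argument gives the right shape of the bound but does not establish the coefficients, nor justify the $\mu_k$-tail control. If you want a self-contained proof you should make this smoothing/unwinding step explicit; otherwise the honest route is the one the paper takes, namely to cite \cite{BP}, \cite{DP}, \cite{MP}.
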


In order to apply Theorem \ref{thm3.8} we take $V_k=(n_k-m_{k-1})^{-1/2}Q_k,\, \cG_k$ given by (\ref{3.13}) and
\[
g(w)=\exp(-\frac 12\langle\vs w,w\rangle)
\]
so that $G$ is the mean zero $d$-dimensional Gaussian distribution with the covariance matrix $\vs$. Relying on Lemma \ref{lem3.7} we take $\wp=\frac 1{20}$,
\begin{equation*}
K_k=(n_k-m_{k-1})^{\wp/4d}\leq (n_k-m_{k-1})^{\wp/2}\,\,\mbox{and}\,\,
\nu_k=C_6k^{-\be\wp}
\end{equation*}
for some $C_6>0$ independent of $k\geq 1$. By the Chebyshev inequality we have also
\begin{eqnarray*}
&G\{ x:\, |x|\geq \frac {K_k}4\}=P\{|\Psi|\geq\frac 14(n_k-m_{k-1})^{\wp/4d}\}\\
&\leq 4d\|\vs\|(n_k-m_{k-1})^{-\wp/2d}\leq C_7k^{-\wp\be/2d}
\end{eqnarray*}
for some $C_7>0$ which does not depend on $k$.

Now Theorem \ref{thm3.8} provides us with random vectors $W_k,\, k\geq 1$ satisfying the properties (i)--(iii), in
particular, the random vector $W_k$ has the mean zero Gaussian distribution with the covariance matrix $\vs$, it is
independent of $W_1,...,W_{k-1}$ and the property (iii) holds true with
\[
\vr_k=4\frac \wp d(n_k-m_{k-1})^{-\wp/4d}\log(n_k-m_{k-1})+2C_6^{1/2}k^{- \be\wp/4}+2C_7^{1/2}k^{-\be\wp/4d}.
\]
Next, we choose $\be>160d$ which gives
\begin{equation}\label{3.18}
\vr_k\leq C_8k^{-2}
\end{equation}
for all $k\geq 1$ where $C_8>0$ does not depend on $k$.

Next, let $W(t),\, t\geq 0$ be a $e$-dimensional Brownian motion with the covariance matrix $\vs$ at the time 1. Then the sequence
 of random vectors $\tilde W_k=(n_k-m_{k-1})^{-1/2}(W(n_k)-W(m_{k-1})),\, k=1,2,...$ and $W_k,\, k\geq 1$ have the same
 distributions. Moreover, we can redefine the process $\xi(n),\,-\infty<n<\infty$ and choose a Brownian motion $W(s),\,
 s\geq 0$ (with the covariance matrix $\vs$ at the time 1) preserving their distributions so that the joint distribution of the sequences
  of pairs $(V_k,W_k)$ and of $(\tilde V_k,\tilde W_k)$, where $\tilde V_k$ is constructed as $V_k$ but using the redefined
   process $\xi(n)$, will be the same. Indeed, by the Kolmogorov extension theorem (see, for instance, \cite{Bil}) such pair
   of processes exists if we impose consistent restrictions on their joint finite dimensional distributions. But since the pair
   of processes $\xi(n),\, n\geq 0$ and $W_k,\, k\geq 1$ satisfying the conditions of Theorem \ref{thm3.8} exist as asserted
   there, these conditions are consistent and the required pair of processes indeed exists.
   A more precise justification of this relies on Lemma A1 from \cite{BP}. Namely, let $R$
   be the joint distribution of the process $\xi(n),\,-\infty<n<\infty$ and of the sequence
   $W_k,\, k\geq 1$ and let $\tilde R$ be the joint distribution of the sequence
   $\tilde W_k,\, k\geq 1$ and a $e$-dimensional Brownian motion $W(t),\, t\geq 0$
   with the covariance matrix $\vs$ at the time 1. Since the second marginal of $R$ coincides with the
   first marginal of $\tilde R$, it follows by Lemma A1 of \cite{BP} that the process $\xi$
   and the sequence $W_k,\, k\geq 1$ can be redefined on a richer probability space where there exists a Brownian motion $W(t),\, t\geq 0$ with the covariance matrix $\vs$ (at the time 1) such
   that $W_k=(n_k-m_{k-1})^{-1/2}(W(n_k)-W(m_{k-1}))$, and so from now on we will rely on
   this equality and assume that these $W_k$'s satisfy (\ref{3.17}) with $\vr_k$ satisfying
   (\ref{3.18}). It follows by the Borel-Cantelli lemma that there exists a random variable $D=D(\om)<\infty$ a.s. such that
   \begin{equation}\label{3.19}
   |V_k-W_k|\leq Dk^{-2}\quad\mbox{a.s.}
   \end{equation}

   Now we can obtain the following result.
   \begin{lemma}\label{lem3.9} With probability one,
  \begin{equation}\label{3.20}
  \sup_{0\leq t\leq T}|\sum_{1\leq k\leq\ell_N(t)}Q_k-W(tN)|=O(N^{\frac 12-\del})
  \end{equation}
  for some $\del>0$ which does not depend on $N$.
  \end{lemma}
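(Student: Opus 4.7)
The plan is to decompose the difference inside the supremum into three telescoping pieces adapted to the partition $0 = m_0 < n_1 < m_1 < n_2 < m_2 < \cdots$ of blocks and gaps. Setting $A_k := Q_k - (W(n_k) - W(m_{k-1}))$ and $B_k := W(m_k) - W(n_k)$, telescoping the Brownian increments yields
\[
\sum_{k=1}^{\ell_N(t)} Q_k - W(tN) \;=\; \sum_{k=1}^{\ell_N(t)} A_k \;-\; \sum_{k=1}^{\ell_N(t)} B_k \;+\; \big(W(m_{\ell_N(t)}) - W(tN)\big),
\]
and the strategy is to bound the supremum over $t\in[0,T]$ of each of the three summands by $O(N^{1/2-\del})$ almost surely, for one common $\del>0$.

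For the $A_k$-sum I would invoke the pathwise estimate (\ref{3.19}). Since $Q_k = (n_k-m_{k-1})^{1/2} V_k$ and $W(n_k) - W(m_{k-1}) = (n_k-m_{k-1})^{1/2} W_k$, rescaling multiplies the $Dk^{-2}$ bound by $(n_k-m_{k-1})^{1/2} \leq k^{\be/2}$, producing $|A_k| \leq D k^{\be/2-2}$ a.s. Using $\ell_N \leq N^{1/(\be+1)}$ from (\ref{3.8}) gives $\sup_t|\sum_{k=1}^{\ell_N(t)} A_k| \leq D\sum_{k=1}^{\ell_N} k^{\be/2-2} = O(N^{(\be/2-1)/(\be+1)})$, which is $O(N^{1/2-\del})$ for any $\del$ slightly smaller than $3/(2(\be+1))$.

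The gap contribution $n \mapsto \sum_{k\leq n} B_k$ is a discrete martingale of independent centered Gaussian vectors with covariance $(m_k-n_k)\vs$ of order $k^{\be/4}$, coming purely from the fixed Brownian motion $W$. Applying Doob's $L^2$ maximal inequality and a standard Gaussian tail plus Borel--Cantelli argument along $N$, one gets $\sup_{0\leq t\leq T}|\sum_{k\leq\ell_N(t)} B_k| = O(N^{(\be/4+1)/(2(\be+1))}(\log N)^{1/2})$ a.s., which for the $\be > 160 d$ chosen in (\ref{3.18}) is comfortably below $N^{1/2-\del}$.

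The residual term $W(m_{\ell_N(t)}) - W(tN)$ lives on a Brownian interval of length at most $m_{\ell_N+1}-m_{\ell_N} \leq C\ell_N^\be \leq C N^{\be/(\be+1)}$, so L\'evy's modulus of continuity on $[0,NT]$ produces $O(N^{\be/(2(\be+1))}(\log N)^{1/2}) = O(N^{1/2-1/(2(\be+1))}(\log N)^{1/2})$ a.s. This is the piece closest to the target exponent $1/2$ and is the main obstacle: the $\sqrt{\log N}$ factor forces $\del$ to be taken strictly smaller than $1/(2(\be+1))$. With any such common $\del>0$, combining the three estimates delivers (\ref{3.20}).
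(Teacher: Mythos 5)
Your telescoping identity and decomposition into $A_k$, $B_k$ and the residual $W(m_{\ell_N(t)})-W(tN)$ is correct and in fact agrees with the paper's: the paper's $J_N^{(1)}$ is exactly your $A_k$-sum, while the paper's $J_N^{(2)}$ is your $B_k$-sum and residual term bundled together as a single martingale in $t$. Your treatment of the $A_k$-piece is identical to the paper's and correct, and your exponent bookkeeping (the residual, with exponent $\tfrac12 - \tfrac{1}{2(\be+1)}$, is the binding constraint) is right.

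The step that does not hold as written is the residual estimate via ``L\'evy's modulus of continuity on $[0,NT]$.'' L\'evy's theorem is an asymptotic statement for a \emph{fixed} interval and vanishing window $h\to 0$; here the interval $[0,NT]$ and the window $h\asymp N^{\be/(\be+1)}$ \emph{both grow with} $N$. There is no direct ``L\'evy modulus on $[0,NT]$'' to cite, and in fact a bound of the form $\sup_{s,t\in[0,L],\,|s-t|\le h}|W(s)-W(t)|\le C\sqrt{h\log(eL/h)}$ cannot hold with a single a.s.\ constant $C$ uniformly in $L$ (take $h=L$ and compare with the LIL). What one does have is a Cs\"org\H o--R\'ev\'esz increment bound, which carries an extra $\log\log L$ inside the logarithm, or, more elementarily, a direct Gaussian-tail plus union-bound plus Borel--Cantelli argument over the $\ell_N+1$ sub-intervals $[m_k,m_{k+1}]$ (each of length $\lesssim \ell_N^{\be}$). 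Either of these does yield your claimed $O(N^{\be/(2(\be+1))}\sqrt{\log N})$ with the a.s.\ constant uniform in $N$, which is what you need for the final conclusion — but the justification must be repaired. The paper avoids this snag by keeping $J_N^{(2)}(t)=W(tN)-W(m_{\ell_N(t)})+\sum_{k\le\ell_N(t)}(W(m_k)-W(n_k))$ intact as a martingale in $t$, using Doob's inequality at the level of $2M$-th moments (with $M$ large), computing the moments of the two components directly, and then closing the argument with Chebyshev plus Borel--Cantelli over $N$. That route produces power bounds throughout and handles the uniformity in $N$ automatically, at the cost of a slightly longer moment computation than your approach suggests.
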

  \begin{proof} We have
  \[
  J_N(t)=|\sum_{1\leq k\leq \ell_N(t)}Q_k-W(tN)|\leq  J_N^{(1)}(t)+| J_N^{(2)}(t)|
  \]
  where by (\ref{3.8}) and (\ref{3.19}),
   \begin{equation}\label{3.21}
  J_N^{(1)}(t)=\sum_{1\leq k\leq \ell_N(t)}(n_k-m_{k-1})^{1/2}|V_k-W_k|\leq D\sum_{1\leq k\leq \ell_N}
  [k^\be]^{1/2}k^{-2}\leq D2^{\be/2}N^{\frac 12(1-\frac 3{\be+1})}
  \end{equation}
 and
 \[
  J_N^{(2)}(t)=W(tN)-W(m_{\ell_N(t)})+\sum_{1\leq k\leq\ell_N(t)}(W(m_k)-W(n_k)).
  \]
 Clearly, $ J_N^{(2)}(t),\, t\geq 0$ is a martingale in $t$, and so
 \begin{eqnarray}\label{3.22}
& E\sup_{0\leq t\leq T}|J^{(2)}_N(t)|^{2M}\leq (\frac {2M}{2M-1})^{2M}E|J^{(2)}_N(T)|^{2M}\\
&\leq (\frac {4M}{2M-1})^{2M}(E|W(TN)-W(m_{\ell_N})|^{2M}+E|J^{(3)}_N(T)|^{2M})\nonumber
\end{eqnarray}
where $J_N^{(3)}(t)=\sum_{1\leq k\leq\ell_N}(W(m_k)-W(n_k))$. Now, by (\ref{3.8}),
\begin{eqnarray}\label{3.23}
&E|W(TN)-W(m_{\ell_N})|^{2M}\leq\|\vs^{1/2}\|^{2M}(\prod_{j=1}^M(2j-1))(2\ell_N^\be)^M\\
&\leq\|\vs^{1/2}\|^{2M}\sqrt {2M!}2^MN^{(1-\frac 1{\be+1})M}.\nonumber
\end{eqnarray}

Next, $J^{(3)}_N(T)$ is a sum of independent random vectors and we will estimate last term in the right hand side
of (\ref{3.22}) relying on Lemma \ref{lem3.2} with $\eta_j=W(m_j)-W(n_j)$ and $\cG_j=\sig\{ W(t),\, t\leq m_j\}$ for
$j=1,...,\ell_N$. Then
\[
A_{2M}=\sup_{i\geq 1}\sum_{j\geq i}\| E(\eta_j|\cG_i)\|_{2M}=\sup_{i\geq 1}\|\eta_i\|_{2M}\leq\sqrt {2M}(\ell_N+1)^{\be/8},
\]
and so by Lemma \ref{lem3.2},
\begin{eqnarray}\label{3.24}
&E|J^{(3)}_N(T)|^{2M}\leq 3(2M)!(2M)^{M}(\ell_N+1)^{M(1+\frac \be 4)}\\
&\leq 3(2M)!(2M)^{M}2^{M(1+\frac \be 4)}N^{\frac M4(1+\frac 3{\be+1})}.\nonumber
\end{eqnarray}
By (\ref{3.22})--(\ref{3.24}) and the Chebyshev inequality
\begin{eqnarray}\label{3.25}
&P\{\sup_{0\leq t\leq T}|J^{(2)}_N(t)|\geq N^{\frac 12-\del}\}\leq N^{-M(1-2\del)}E\sup_{0\leq t\leq T}|J^{(2)}_N(t)|^{2M}\\
&\leq C_9(M)(N^{-M(\frac 1{\be+1}-2\del)}+N^{-M(\frac 34-\frac 1{\be+1}-2\del)})\nonumber
\end{eqnarray}
where $C_9(M)>0$ does not depend on $N$. Now choose $\be\geq 2,\,\del<\frac 1{4(\be+1)}$ and $M\geq 2(\be+1)$ then the right
hand side of (\ref{3.25}) forms a converging sequence, and so by the Borel-Cantelli lemma
\[
\sup_{0\leq t\leq T}|J^{(2)}_N(t)|=O(N^{\frac 12-\del})\quad\mbox{a.s.}
\]
which together with (\ref{3.21}) completes the proof of Lemma \ref{lem3.9}.
  \end{proof}

Now combining Lemmas \ref{lem3.6} and \ref{lem3.9} we obtain that for some $\del>0$ and all $N\geq 1$,
\begin{equation}\label{3.26}
\sup_{0\leq t\leq T}|S_N(t)-W_N(t)|=O(N^{-\del})\quad\mbox{a.s.},
\end{equation}
where $W_N(t)=N^{-1/2}W(tN)$ is another Brownian motion.

\subsection{$p$-variation norm estimates}\label{subsec3.4}

Thus, (\ref{2.16}) is proved in the supremum norm and we will extend it next for the $p$-variation norm.
First, for any $\al\in(0,1)$ and $\be\in(0,1/2)$ (which has nothing to do with $\be$ in the previous subsection)
we estimate by Lemma \ref{lem3.4} and the Chebyshev inequality
\begin{eqnarray}\label{3.27}
&P\big\{\max_{[TN]\wedge(k+N^\al)\geq l>k\geq 0}\frac {|S_N(l/N)-S_N(k/N)|}{|(l-k)/N|^{\frac 12-\be}}>1\big\}\\
&\leq\sum_{[TN]\wedge(k+N^\al)\geq l>k\geq 0}P\big\{|\sum_{k\leq j<l}\xi(j)|>N^\be(l-k)^{\frac 12-\be}\big\}\nonumber\\
&\leq N^{-2\be M}\sum_{[TN]\wedge(k+N^\al)\geq l>k\geq 0}(l-k)^{-M+2\be M}E|\sum_{k\leq j<l}\xi(j)|^{2M}\nonumber\\
&\leq C_1^\xi(M)TN^{1-2\be M}\sum_{N^\al\geq j\geq 1}j^{2\be M}\le 2^{2\be M+1}C_1^\xi(M)TN^{-2\be M(1-\al)+1+\al}.\nonumber
\end{eqnarray}
Next, we choose $M\geq\frac {3+\al}{2\be(1-\al)}$ which makes the right hand side of (\ref{3.27}) a term in a converging
series. Hence, by the Borel--Cantelli lemma for any $\al\in(0,1)$ and $\be\in(0,1/2)$ there exists a finite a.s. random
variable $C^S_{\al,\be}=C^S_{\al,\be}(\om)$ such that for all $N\geq 1$,
\begin{equation}\label{3.28}
|S_N(l/N)-S_N(k/N)|\leq C^S_{\al,\be}\big\vert\frac lN-\frac kN\big\vert^{\frac 12-\be}\,\,\mbox{if}\,\, k+N^\al\geq l>k
\geq 0,\, l\leq[TN].
\end{equation}

Next, define
\[
\hat W_N(t)=W_N(\frac {[Nt]}N)=\sum_{0\leq k<[Nt]}(W_N(\frac {k+1}N)-W_N(\frac kN)).
\]
Let $0=t_0<t_1<...<t_m=T$ and observe that if $[t_iN]=[t_{i+1}N]$ then
\[
S_N(t_{i+1})=S_N(t_i)\quad\mbox{and}\quad\hat W_N(t_{i+1})=\hat W_N(t_i).
\]
Hence,
\begin{eqnarray}\label{3.29}
&\sum_{0\leq i<m}|S_N(t_{i+1})-\hat W_N(t_{i+1})-S_N(t_{i})+\hat W_N(t_i)|^p\\
&=\sum_{0\leq j<n}|S_N(\frac {k_{j+1}}N)-W_N(\frac {k_{j+1}}N)-S_N(\frac {k_{j}}N)+W_N(\frac {k_{j}}N)|^p\nonumber\\
&\leq J^{(1)}_N+2^{p-1}(J_N^{(2)}+J_N^{(3)})\nonumber
\end{eqnarray}
where
\[
J_N^{(1)}=\sum_{0\leq j<n,\, k_{j+1}-k_j>N^\al}|S_N(\frac {k_{j+1}}N)-W_N(\frac {k_{j+1}}N)-S_N(\frac {k_{j}}N)+W_N
(\frac {k_{j}}N)|^p,
\]
\[
J_N^{(2)}=\sum_{0\leq j<n,\, k_{j+1}-k_j\leq N^\al}|S_N(\frac {k_{j+1}}N)-S_N(\frac {k_{j}}N)|^p,
\]
\[
J_N^{(3)}=\sum_{0\leq j<n,\, k_{j+1}-k_j\leq N^\al}|W_N(\frac {k_{j+1}}N)-W_N(\frac {k_{j}}N)|^p,
\]
$k_j=[t_{i_j}N]$ and $0=t_{i_0}<t_{i_1}<...<t_{i_n}=T$ is the maximal subsequence of $t_0,t_1,...,t_m$ such that
$[t_{i_j}N]<[t_{i_{j+1}}N],\, j=0,1,...,n-1$.

In order to estimate $J^{(1)}_N$ we use (\ref{3.26}) and observe that there exist no more than $[TN^{1-\al}]$ intervals
$[k_{j},k_{j+1}]$ such that $k_{j+1}-k_j>N^\al$ which gives that
\begin{equation}\label{3.30}
J^{(1)}_N\leq C^{(1)}N^{1-\al-p\del}
\end{equation}
for some a.s. finite random variable $C^{(1)}=C^{(1)}(\om)>0$ which does not depend on $N$, $n$ or on the choice of $t_1,...,t_m$
and we choose $\al$ so close to 1 that $1-\al-p\del<0$. In order to estimate $J^{(2)}_N$ we
use (\ref{3.28}) and observe that $\sum_{0\leq j<n}|k_{j+1}-k_j|\leq TN$ which gives
\begin{eqnarray}\label{3.31}
&J^{(2)}_N\leq (C^S_{\al,\be})^pN^{-p(\frac 12-\be)}\sum_{0\leq j<n,\, k_{j+1}-k_j\leq N^\al}|k_{j+1}-k_j|^{p(\frac 12-\be)}\\
&\leq (C^S_{\al,\be})^pTN^{-(1-\al)(\frac p2-1-p\be)}\nonumber
\end{eqnarray}
where we assume that
\begin{equation}\label{3.32}
0<\be<\frac 12-\frac 1p
\end{equation}
which is consistent since $p>2$.

It remains to estimate $J^{(3)}_N$ (which essentially is contained in the proof of H\" older continuity of the Brownian
motion). We start with the estimate similar to (\ref{3.27}) relying on the Chebyshev inequality and the standard moment
estimates of Gaussian random vectors,
\begin{eqnarray}\label{3.33}
&P\big\{\max_{[TN]\wedge(k+N^\al)\geq l>k\geq 0}\frac {|W_N(l/N)-W_N(k/N)|}{|(l-k)/N|^{\frac 12-\be}}>1\big\}\\
&\leq\sum_{[TN]\wedge(k+N^\al)\geq l>k\geq 0}P\big\{\frac {|W_N(l/N)-W_N(k/N)|}{|(l-k)/N|^{\frac 12-\be}}>1\big\}\nonumber\\
&\leq C_{10}(M)N^{-2\be M(1-\al)+1+\al}\nonumber
\end{eqnarray}
where $C_{10}(M)>0$ does not depend on $N\geq 1$. Choose again $M\geq\frac {3+\al}{2\be(1-\al)}$ which makes the right hand
side of (\ref{3.33}) a term in a converging series. Again, by the Borel--Cantelli lemma for any $\al\in(0,1)$ and
 $\be\in(0,1/2)$ there exists a finite a.s. random variable $C_{\al,\be}=C_{\al,\be}(\om)$ such that for all $N\geq 1$,
 \begin{equation}\label{3.34}
|W_N(l/N)-W_N(k/N)|\leq C_{\al,\be}\big\vert\frac lN-\frac kN\big\vert^{\frac 12-\be}\,\,\mbox{if}\,\, k+N^\al\geq l>k
\geq 0,\, l\leq[TN].
\end{equation}
Proceeding as in (\ref{3.31}) we obtain that
\begin{equation}\label{3.35}
J^{(3)}_N\leq C^p_{\al,\be}TN^{-(1-\al)(\frac p2-1-p\be)}
\end{equation}
where we assume again (\ref{3.32}). Finally, we conclude from (\ref{3.29})--(\ref{3.31}) and (\ref{3.35}) that
\begin{equation}\label{3.36}
\| S_N-\hat W_N\|_{p,[0,T]}=O(N^{-\tilde\del})\quad\mbox{a.s.},
\end{equation}
where $\tilde\del=\min(\al+p\del-1,(1-\al)(\frac p2-1-p\be))>0$, assuming (\ref{3.32}) and choosing $\al>1-p\del$.

Next, set
\[
\bar\bbW_N^{ij}(t)=\int_0^t W^i_N(u)dW_N^j(u)
\]
and
\[
\hat\bbW_N^{ij}(t)=\sum_{0\leq l<[Nt]}(W^j_N(\frac {l+1}N)-W^j_N(\frac lN))W_N^i(\frac lN).
\]
The relation (\ref{2.16}) from Theorem \ref{thm2.2} follows from (\ref{3.36}) and the following result which will be
used also in Sections \ref{roughsec4} and \ref{roughsec5}.
\begin{lemma}\label{lem3.10} For all $T>0$ and $p\in(2,3)$,
 \begin{equation}\label{3.37}
 \| W_N-\hat W_N\|_{p,[0,T]}=O(N^{-\del})\quad\mbox{a.s. as}\quad N\geq 1
 \end{equation}
 and
  \begin{equation}\label{3.38}
 \max_{1\leq i,j\leq d}\|\bar\bbW^{ij}_N-\hat\bbW^{ij}_N\|_{\frac p2,[0,T]}=O(N^{-\del})\quad\mbox{a.s. as}\quad N\geq1
 \end{equation}
 for some $\del>0$ which does not depend on $N\geq 1$.
 \end{lemma}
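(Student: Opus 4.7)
The plan for \eqref{3.37} is to exploit that $Z_N := W_N - \hat W_N$ vanishes at each lattice point $k/N$ and on each cell $[k/N,(k+1)/N)$ coincides with a centered Brownian increment (since $\hat W_N$ is constant on each cell). For any partition $\cP$ of $[0,T]$ I would split $\sum_j |Z_N(s_{j+1}) - Z_N(s_j)|^p$ into sub-intervals lying inside a single cell and those crossing a cell boundary. The within-cell contributions telescope into $\sum_k \|W_N\|^p_{p,[k/N,(k+1)/N]}$, which, using a.s.~H\"older regularity of $W_N$ with exponent $\tfrac12-\beta$ (obtainable from a Borel--Cantelli argument on a refined lattice as in \eqref{3.33}--\eqref{3.34}, or from L\'evy's modulus of continuity via the scaling $W_N(t) = N^{-1/2}\cW(Nt)$), is bounded by $C N^{-p(1/2-\beta)}$ per cell whenever $\beta < \tfrac12-\tfrac1p$; summing the $[TN]+1$ cells yields $O(N^{1-p(1/2-\beta)})$. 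The at most $[TN]+1$ boundary-crossing sub-intervals each have increment $\leq 2\sup_u|Z_N(u)| = O(N^{-(1/2-\beta)})$, contributing the same rate. Taking $p$-th roots yields \eqref{3.37} with a positive $\delta$.

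For \eqref{3.38}, I would start from the identity, valid for $s \leq b:=[tN]/N$,
\begin{equation*}
\bar\bbW^{ij}_N(s,t) - \hat\bbW^{ij}_N(s,t) = \int_s^b \bigl(Z^i_N(u) - Z^i_N(s)\bigr)\,dW^j_N(u) + \int_b^t \bigl(W^i_N(u) - W^i_N(s)\bigr)\,dW^j_N(u),
\end{equation*}
which follows from $\hat\bbW^{ij}_N(s,t) = \int_s^b (\hat W^i_N(u) - \hat W^i_N(s))\,dW^j_N(u)$ (because $\hat W^i_N$ is piecewise constant on lattice cells) upon subtracting the definition of $\bar\bbW^{ij}_N(s,t)$. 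At lattice endpoints $s=k/N$, $t=l/N$ the second integral vanishes and the first reduces to $\sum_{m=k}^{l-1}\Xi_m$, with $\Xi_m := \int_{m/N}^{(m+1)/N}(W^i_N(u) - W^i_N(m/N))\,dW^j_N(u)$ independent, centered, with $E|\Xi_m|^q = O(N^{-q})$ for every $q\geq 1$ by BDG and Brownian scaling. Combining BDG for both integrals with the uniform bounds $\sup_u|Z^i_N(u)| = O(N^{-(1/2-\beta)})$ coming from \eqref{3.37} and $|W^i_N(u) - W^i_N(s)| = O(|u-s|^{1/2-\beta})$ yields moment estimates of the form $E|\bar\bbW^{ij}_N(s,t) - \hat\bbW^{ij}_N(s,t)|^{2M} \leq C_M N^{-M(1-2\beta)}(t-s)^M$ in the non-trivial case $[tN] > [sN]$.

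The hard part is upgrading these moment bounds to an a.s.~$p/2$-variation estimate uniformly in the partition, because $p/2\in(1,2)$ lies below the range in which classical Kolmogorov--H\"older criteria convert polynomial moments into $q$-variation. My plan is to do this in two stages: first, on lattice partitions, exploit the independent-sum structure of $\Delta:=\bar\bbW^{ij}_N - \hat\bbW^{ij}_N$ -- a direct computation using $p>2$ shows that $\sup_\cP \sum_j |\sum_{m=k_j}^{k_{j+1}-1}\Xi_m|^{p/2}$ is controlled with high probability by the finest-scale sum $\sum_m |\Xi_m|^{p/2} = O(TN\cdot N^{-p/2}) = O(N^{-(p-2)/2})$, via a Chebyshev/Borel--Cantelli argument analogous to that in Subsection~\ref{subsec3.3}; second, handle general (non-lattice) partition endpoints via the boundary integral in the identity above together with \eqref{3.37}. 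An alternative, perhaps cleaner, route is to apply a rough-path Kolmogorov criterion (such as Theorem~15.11 in \cite{FV}) to $\Delta$ after checking that it satisfies Chen's relation with a defect controlled by the size of the lattice discretization, which is of order $N^{-1}$ and can therefore be absorbed into the error.
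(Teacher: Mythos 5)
Your proposal takes a genuinely different route from the paper's proof, which proceeds in four steps: an $L^{q}$ estimate (any $q<\infty$) in the supremum norm with rate $N^{-\alpha}$; a uniform-in-$N$ $L^{q}$ bound on the $p$-variation norms (invoking Proposition 6.17 of \cite{FZ} at the second level); an interpolation $\|\cdot\|_{p'/2}\le \|\cdot\|_{\infty}^{1-p/p'}\|\cdot\|_{p/2}^{p/p'}$ to convert these into an $L^{q}$ rate in $p'$-variation; and finally Borel--Cantelli. For the first level \eqref{3.37}, your pathwise decomposition into within-cell and boundary-crossing partition intervals is correct and clean, and using Brownian scaling $W_{N}(t)=N^{-1/2}\cW(Nt)$ one can bound the within-cell contribution by $N^{-p/2}\sum_{k<TN}\|\cW\|_{p,[k,k+1]}^{p}=O(N^{1-p/2})$ a.s.\ (by the SLLN applied to i.i.d.\ Gaussian-concentrated terms) and the boundary contribution by $O(N)\cdot(\sup|Z_N|)^p$ with $\sup|Z_N|=O(N^{-1/2}\sqrt{\log N})$; this indeed gives $\|W_N-\hat W_N\|_{p}=O(N^{-\delta})$ for any $\delta<1-2/p$. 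This is a legitimate alternative to steps (2)--(3) of the paper for level one.

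At the second level \eqref{3.38}, however, there is a gap in the crucial step. Your claim that ``a direct computation using $p>2$ shows that $\sup_{\cP}\sum_j\big|\sum_{m=k_j}^{k_{j+1}-1}\Xi_m\big|^{p/2}$ is controlled with high probability by the finest-scale sum'' is not correct as stated and is not the argument one would actually make: for a sum of centered independent random variables and $q=p/2\in(1,2)$, the $q$-variation of the partial-sum path is \emph{not} dominated by $\sum_m|\Xi_m|^{q}$; an adversarially chosen partition can coincide with excursions of the walk and produce a strictly larger value, and even in mean the comparison is only an order-of-magnitude heuristic (block size $h$ contributes $\sim TN^{1-p/2}h^{p/4-1}$, so the finest scale merely \emph{orders} the magnitudes rather than bounding the supremum). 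What is actually needed, and what the paper does in the analogous places (cf.\ \eqref{4.35}--\eqref{4.38} and \eqref{4.39}--\eqref{4.47}), is a two-scale argument: a $2M$-moment bound of H\"older type for $\Delta(m/N,n/N)$ over short blocks $n-m\le N^{\alpha}$ yielding, after Chebyshev, Borel--Cantelli over the whole lattice, an a.s.\ bound $|\Delta(m/N,n/N)|\le C_{\alpha,\beta}|(n-m)/N|^{1-\beta}$; and the sup-norm rate from your Stage~1 for the $O(TN^{1-\alpha})$ long blocks with $n-m>N^{\alpha}$. Your ``analogous to Subsection~\ref{subsec3.3}'' gesture plausibly intends this, but the finest-scale-sum formulation masks the actual mechanism and cannot stand on its own. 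Your alternative via a rough-path Kolmogorov criterion is also not immediate: $\Delta=\bar\bbW^{ij}_N-\hat\bbW^{ij}_N$ does not satisfy Chen's relation, and the defect $W_N(s,t)\otimes W_N(t,u)-\hat W_N(s,t)\otimes\hat W_N(t,u)$ is not simply $O(N^{-1})$ but scales like $\sup|Z_N|\cdot|W_N(s,u)|\sim N^{-\alpha}|u-s|^{1/2-\beta}$, i.e.\ it is interwoven with the increment scale rather than a uniformly small perturbation; absorbing it rigorously would require essentially the same two-scale / interpolation work that the paper carries out. In short, the plan is salvageable but the upgrade from moment bounds to the a.s.\ $p/2$-variation estimate is the whole content of the lemma and is not established by what is written.
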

 We postpone the proof of this result till the end of Section \ref{roughsec6}. Observe that Lemma \ref{lem3.10}
 gives estimates for a version of the Euler--Maruyama rough paths approximation in $p$-variation norms for a family
 of processes depending on a parameter which by itself controls the step of approximations.

 \section{Strong approximations for iterated sums }\label{roughsec4}\setcounter{equation}{0}
\subsection{Estimates in the supremum norm}\label{subsec4.1}

As in the previous section we will prove first the estimate (\ref{2.17}) in the supremum norm and then will
extend it to the $p/2$-variation norm. Set $m_N=[N^{1-\ka}]$ with a small $\ka>0$ which will be chosen later
on, $\nu_N(l)=\max\{ jm_N:\, jm_N<l\}$ if $l>m_N$ and
\[
R_i(k)=R_i(k,N)=\sum_{l=(k-1)m_N}^{km_N-1}\xi_i(l)\,\,\mbox{for}\,\, k=1,2,...,\iota_N(T)
\]
 where $\iota_N(t)=[[Nt]m_N^{-1}]$.
For $1\leq i,j\leq e$ define
\begin{equation}\label{4.1}
\bbU_N^{ij}(t)=N^{-1}\sum_{l=m_N}^{\iota_N(t)m_N-1}\xi_j(l)\sum_{k=0}^{\nu_N(l)}\xi_i(k)=N^{-1}\sum_{1<l\leq\iota_N(t)}
R_j(l)\sum_{k=0}^{(l-1)m_N-1}\xi_i(k).
\end{equation}
Set also
\[
\bar\bbS_N^{ij}(t)=\bbS_N^{ij}(t)-t\sum_{l=1}^\infty E(\xi_i(0)\xi_j(l)).
\]

We will need the following result.
\begin{lemma}\label{lem4.1}
For all $i,j=1,...,e$ and $N\geq 1$,
\begin{equation}\label{4.2}
\sup_{0\leq t\leq T}|\bar\bbS_N^{ij}(t)-\bbU_N^{ij}(t)|=O(N^{-\del_1})\,\,\mbox{a.s.}
\end{equation}
for some $\del_1>0$ which does not depend on $N$.
\end{lemma}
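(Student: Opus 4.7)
The plan is to decompose $\bar\bbS_N^{ij}(t)-\bbU_N^{ij}(t)$ according to the block structure $B_r=\{(r-1)m_N,\ldots,rm_N-1\}$ and to bound each piece via Lemma~3.4 together with the decay of correlations forced by (2.12). Reading the second formula in (4.1), $\bbU_N^{ij}(t)$ captures exactly the cross-products $\xi_i(k)\xi_j(l)$ with $k,l$ in distinct blocks $B_{r'},B_r$ satisfying $r'<r\le\iota_N(t)$. Hence the starting point is the identity
$$\bbS_N^{ij}(t)-\bbU_N^{ij}(t)=N^{-1}\sum_{r=1}^{\iota_N(t)}A_r+N^{-1}B_N(t),$$
with $A_r:=\sum_{(r-1)m_N\le k<l<rm_N}\xi_i(k)\xi_j(l)$ the intra-block contribution of block $B_r$ and $B_N(t):=\sum_{\iota_N(t)m_N\le l<[Nt]}\xi_j(l)\sum_{k=0}^{l-1}\xi_i(k)$ the tail past block $B_{\iota_N(t)}$; any off-by-one discrepancy between the two forms of $\bbU_N^{ij}$ in (4.1) contributes $O(N^{-1/2})$ and is absorbed. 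The next step is to split $A_r=EA_r+(A_r-EA_r)$ and to treat the three resulting pieces in turn.

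For the means: stationarity gives $EA_r=\sum_{u=1}^{m_N-1}(m_N-u)\al_u$ with $\al_u:=E(\xi_i(0)\xi_j(u))$. Applying Lemma~3.1 after approximating $\xi_i(0)$ and $\xi_j(u)$ by their conditional expectations on $\cF_{-u/3,u/3}$ and $\cF_{2u/3,4u/3}$ (via (2.3)), and invoking (2.12), one obtains $|\al_u|=O(u^{-3})$, whence $\sum_u u|\al_u|<\infty$ and $\sum_{u\ge m_N}|\al_u|=O(m_N^{-2})$. Combined with $\iota_N(t)m_N/N=t+O(N^{-\ka})$ and $\iota_N(t)/N=O(N^{\ka-1})$, an elementary expansion yields
$$\sup_{0\le t\le T}\Bigl|N^{-1}\sum_{r=1}^{\iota_N(t)}EA_r-t\sum_{u=1}^{\infty}\al_u\Bigr|=O(N^{-\ka})+O(N^{\ka-1}).$$

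For the centered intra-block piece, I would write $\sum_{r=1}^{n}(A_r-EA_r)=\bbR(\ell,0,nm_N-1)$ in the notation of Lemma~3.4 with $\zeta=\xi_i$, $\mu=\xi_j$, and $\ell(k)$ equal to the start of the block containing $k$, so that $k-\ell(k)<m_N$ and the outer range is at most $TN$. Lemma~3.4 then gives $E\max_{0\le n\le TN}|\bbR(\ell,0,n)|^{2M}\le C(TN)^Mm_N^M$, and Chebyshev together with Borel--Cantelli (with $M$ sufficiently large) yield $\sup_t N^{-1}|\sum_r(A_r-EA_r)|=O(N^{-\ka/2+\eps})$ a.s. The boundary $B_N(t)$ is attacked analogously: writing $\sum_{k=0}^{l-1}\xi_i(k)=\sum_{k=0}^{\iota_N(t)m_N-1}\xi_i(k)+\sum_{k=\iota_N(t)m_N}^{l-1}\xi_i(k)$, the first contribution combines a LIL-type bound $\max_n|\sum_{k=0}^n\xi_i(k)|=O(\sqrt{N\log\log N})$ (obtained from Lemma~3.4 with $\zeta\equiv 1$ and Borel--Cantelli) with a moment/union bound for the short partial sum of $\xi_j$, while the second is an intra-block iterated sum on an interval of length $\le m_N$ again covered by Lemma~3.4. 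Both give $N^{-1}\sup_t|B_N(t)|=O(N^{-\ka/2+\eps})$ a.s. The $\sup$ over $t$ is harmless in every case because all three pieces are step functions of $t$ with jumps only at points of the form $k/N$, so it reduces to a max over $O(N)$ values and is absorbed into the $\eps$-loss.

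Combining the three bounds gives overall rate $O(N^{-\min(\ka/2,\,1-\ka)+\eps})$, which is maximized at $\ka=2/3$ to produce any $\del_1<1/3$. The main obstacle is the competing demand on the block size $m_N$: $m_N$ must be large enough that the mean-matching error $O(N^{\ka-1})$ and the boundary term are negligible, yet small enough that the intra-block variance $O(Nm_N)/N^2=O(m_N/N)$ stays small. The polynomial decay $|\al_u|=O(u^{-3})$ forced by (2.12) is precisely what enables a genuine polynomial saving on both sides of this trade-off.
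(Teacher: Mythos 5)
Your proposal is correct and follows essentially the same route as the paper: the same block decomposition of $\bar\bbS_N^{ij}-\bbU_N^{ij}$ into a centered intra-block piece, a centered tail/boundary piece, and a deterministic mean-matching piece, with Lemma~3.4, Chebyshev and Borel--Cantelli handling the random parts and the $\phi$/$\rho$-decay of (2.12) via (4.21)-type estimates handling the means. The only cosmetic difference is that you compute $EA_r$ directly and compare with $t\sum_u\al_u$ block-by-block, whereas the paper splits the same deterministic term as $\cI^{(4,1)}-\cI^{(4,2)}$ (total mean of $\bbS_N$ versus mean of $\bbU_N$); also note that the ``LIL-type'' $O(\sqrt{N\log\log N})$ bound you invoke for $\max_n|\sum_{k\le n}\xi_i(k)|$ is more than is needed --- the crude $O(N^{1/2+\eps})$ from Lemma~3.4 plus Borel--Cantelli already suffices, which is what the paper implicitly uses.
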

\begin{proof}
First, we write
\begin{equation}\label{4.3}
|\bar\bbS_N^{ij}(t)-\bbU_N^{ij}(t)|\leq |\cI_N^{(1)}(t)|+|\cI_N^{(2)}(t)|+|\cI_N^{(3)}(t)|
+|\cI_N^{(4)}(t)|
\end{equation}
where
\[
\cI_N^{(1)}(t)=\cI_N^{1;ij}(t)=N^{-1}\sum_{l=m_N}^{\iota_N(t)m_N-1}\sum_{k=\nu_N(l)+1}^{l-1}\big(\xi_j(l)\xi_i(k)-
E(\xi_j(l)\xi_i(k))\big),
\]
\[
\cI_N^{(2)}(t)=\cI_N^{2;ij}(t)=N^{-1}\sum_{l=\iota_N(t)m_N}^{[Nt]-1}\sum_{k=0}^{l-1}\big(\xi_j(l)\xi_i(k)-
E(\xi_j(l)\xi_i(k))\big),
\]
\[
\cI_N^{(3)}(t)=\cI_N^{3;ij}(t)=N^{-1}\sum_{l=1}^{m_N-1}\sum_{k=0}^{l-1}\big(\xi_j(l)\xi_i(k)-
E(\xi_j(l)\xi_i(k))\big)
\]
and
\[
\cI_N^{(4)}(t)=\cI_N^{4;ij}(t)=\cI_N^{4,1;ij}(t)-\cI_N^{4,2;ij}(t)
\]
with
\[
\cI_N^{4,1;ij}(t)=N^{-1}\sum_{l=1}^{[Nt]-1}\sum_{k=0}^{l-1}E(\xi_j(l)\xi_i(k))-t\sum_{l=1}^\infty E(\xi_i(0)\xi_j(l))
\]
and
\[
\cI_N^{4,2;ij}(t)=N^{-1}\sum_{l=m_N}^{\iota_N(t)m_N-1}\sum_{k=0}^{\nu_N(l)}E(\xi_j(l)\xi_i(k)).
\]

By Lemma \ref{lem3.4},
\begin{eqnarray}\label{4.11}
&E\sup_{0\leq t\leq T}|\cI_N^{(1;i,j)}(t)|^{2M}=E\max_{0\leq k<[TN]}|\cI_N^{(1;i,j)}(k/N)|^{2M}\\
&\leq C_{11}(M)T^MN^{-M\ka},\,\,
 E\sup_{0\leq t\leq T}|\cI_N^{(2;i,j)}(t)|^{2M}\leq C_{11}(M)T^MN^{-M\ka}\nonumber\\
&\mbox{and}\,\,\, E\sup_{0\leq t\leq T}|\cI_N^{(3;i,j)}(t)|^{2M}\leq C_{11}(M)N^{-2M\ka}\nonumber
\end{eqnarray}
where $C_{11}(M)>0$ does not depend on $N$. Choosing $\del_1$ and $M$ such that $\del_1<\frac 12\ka$
and $M\geq 2(\ka-2\del_1)^{-1}$ we use, first, the Chebyshev inequality and then the Borel--Cantelli lemma to obtain
that with probability one,
\begin{eqnarray}\label{4.12}
&\sup_{0\leq t\leq T}|\cI_N^{(1;i,j)}(t)|=O(N^{-\del_1}),\,\,
\sup_{0\leq t\leq T}|\cI_N^{(2;i,j)}(t)|=O(N^{-\del_1})\\
&\mbox{and}\,\,\, \sup_{0\leq t\leq T}|\cI_N^{(3;i,j)}(t)|=O(N^{-2\del_1}).\nonumber
\end{eqnarray}

It remains to estimate $\cI_N^{(4;i,j)}(t)$. By (\ref{2.3}), (\ref{2.6}) and Lemma \ref{lem3.1},
\begin{eqnarray}\label{4.21}
&|E(\xi_j(l)\xi_i(k))|\leq 2L\rho(|k-l|/3)+|E\big(E(\xi_j(l)|\cF_{l-[\frac 13|k-l|],l+[\frac 13|k-l|]})\\
&\times E(\xi_i(k)|\cF_{k-[\frac 13|k-l|],k+[\frac 13|k-l|]})\big)|\leq 2L(L\phi(|k-l|/3)+\rho(|k-l|/3)).\nonumber
\end{eqnarray}
By the stationarity
\[
\sum_{k=0}^{l-1}E(\xi_j(l)\xi_i(k))=\sum_{n=1}^lE(\xi_j(n)\xi_i(0))
\]
and by (\ref{2.12}) and (\ref{4.21}) the limit
\[
\cL_{ij}=\lim_{l\to\infty}\sum_{n=1}^lE(\xi_j(n)\xi_i(0))=\sum_{n=1}^\infty E(\xi_j(n)\xi_i(0))
\]
exists and, moreover,
\begin{equation}\label{4.22}
|\sum_{k=0}^{l-1}E(\xi_j(l)\xi_i(k))-\cL_{ij}|\leq 2L\sum_{n=l+1}^\infty(L\phi(n/3)+\rho(n/3)).
\end{equation}
It follows from (\ref{2.12}) and (\ref{2.22}) that
\begin{equation}\label{4.23}
\sup_{0\leq t\leq T}|\cI_N^{(4,1;i,j)}(t)|\leq 2LN^{-1}\sum_{l=0}^\infty\sum_{n=l+1}^\infty(L\phi(n/3)+\rho(n/3))
+N^{-1}|\cL_{ij}|\leq C_{12}N^{-1}
\end{equation}
for some $C_{12}>0$ which does not depend on $N$.

Finally, by (\ref{4.21}),
\[
|\sum_{k=0}^{\nu_N(l)}E(\xi_j(l)\xi_i(k))|\leq 2L\sum_{n=l-\nu_N(l)}^\infty(L\phi(n/3)+\rho(n/3)),
\]
and so by (\ref{2.12}),
\begin{equation}\label{4.24}
\sup_{0\leq t\leq T}|\cI_N^{(4,2;i,j)}(t)|\leq 2LTN^{-(1-\ka)}\sum_{l=1}^{m_N}\sum_{n=l}^\infty(L\phi(n/3)+\rho(n/3))
\leq C_{13}N^{-(1-\ka)}
\end{equation}
for some $C_{13}>0$ which does not depend on $N$. The lemma now follows from (\ref{4.12}),
(\ref{4.23}) and (\ref{4.24}) together with the Chebyshev inequality and the Borel--Cantelli lemma.
\end{proof}

Now set
\[
S^i_N(t)=N^{-1/2}\sum_{0\leq k<[Nt]}\xi_i(k),\,\, i=1,...,e
\]
and observe that
\[
\bbU^{ij}_N(t)=\sum_{2\leq l\leq\iota_N(t)}\big(S^j_N\big(\frac {lm_N}N\big)-S^j_N\big(\frac {(l-1)m_N}N\big)\big)S^i_N
\big(\frac {(l-1)m_N}N\big).
\]
Define
\[
\bbV^{ij}_N(t)=\sum_{2\leq l\leq\iota_N(t)}\big(W^j_N\big(\frac {lm_N}N\big)-W^j_N\big(\frac {(l-1)m_N}N\big)\big)W^i_N
\big(\frac {(l-1)m_N}N\big)
\]
where $W_N=(W^1_N,...,W^e_N)$ is the $e$-dimensional Brownian motion with the covariance matrix $\vs$ (at the time 1)
appearing in (\ref{2.16}) which was constructed in Section \ref{roughsec3}. Then
\begin{eqnarray}\label{4.25}
&\sup_{0\leq t\leq T}|\bbU^{ij}_N(t)-\bbV^{ij}_N(t)|\leq\sum_{2\leq l\leq\iota_N(T)}\big(\big(\big\vert S^j_N\big(\frac {lm_N}N\big)-W^j_N\big(\frac {lm_N}N\big)\big\vert\\
&+\big\vert S^j_N\big(\frac {(l-1)m_N}N\big)-W^j_N\big(\frac {(l-1)m_N}N\big)\big\vert\big)\big\vert
S^i_N\big(\frac {(l-1)m_N}N\big)\big\vert\nonumber\\
&+\big\vert W^j_N\big(\frac {lm_N}N\big)-W^j_N\big(\frac {(l-1)m_N}N\big)\big\vert\big\vert  S^i_N\big(\frac {(l-1)m_N}N\big)
-W^i_N\big(\frac {(l-1)m_N}N\big)\big\vert\big).\nonumber
\end{eqnarray}

By Lemma \ref{lem3.4},
\begin{equation}\label{4.26}
E\max_{2\leq l\leq\iota_N(T)}\big\vert S^i_N\big(\frac {(l-1)m_N}N\big)\big\vert^{2M}
\leq C_{14}(M)T^{M}
\end{equation}
for some $C_{14}(M)>0$ which does not depend on $N$.
By the Chebyshev inequality for any $\gam>0$,
\begin{equation}\label{4.27}
P\{\max_{2\leq l\leq\iota_N(T)}\big\vert S^i_N\big(\frac {(l-1)m_N}N\big)\big\vert>N^\gam\}\leq C_{14}(M)T^{M}N^{-2M\gam}.
\end{equation}
Taking $M\geq\gam^{-1}$ the right hand side of (\ref{4.27}) becomes a term of a converging series and by the
Borel--Cantelli lemma we obtain that for any $\gam>0$,
\begin{equation}\label{4.28}
\max_{2\leq l\leq\iota_N(T)}\big\vert S^i_N\big(\frac {(l-1)m_N}N\big)\big\vert=O(N^\gam)\quad\mbox{a.s.}
\end{equation}
Next, write
\begin{eqnarray*}
&E\max_{2\leq l\leq\iota_N(T)}\big\vert W^j_N\big(\frac {lm_N}N\big)-W^j_N\big(\frac {(l-1)m_N}N\big)\big\vert^{2M}\\
&\leq\sum_{2\leq l\leq\iota_N(T)}E\big\vert W^j_N\big(\frac {lm_N}N\big)-W^j_N\big(\frac {(l-1)m_N}N\big)\big\vert^{2M}.
\end{eqnarray*}
Using the standard moment estimates for the Brownian motion and relying on the Chebyshev inequality and the Borel--Cantelli
lemma we obtain similarly to (\ref{4.27}) and (\ref{4.28}) that for $\gam<\ka/2$,
\begin{equation}\label{4.29}
\max_{2\leq l\leq\iota_N(T)}\big\vert W^j_N\big(\frac {lm_N}N\big)-W^j_N\big(\frac {(l-1)m_N}N\big)\big\vert=O(N^{-\gam})\quad\mbox{a.s}.
\end{equation}
Now, combining (\ref{2.16}), (\ref{4.25}), (\ref{4.28}) and (\ref{4.29}) we obtain that
\begin{equation}\label{4.30}
\sup_{0\leq t\leq T}|\bbU^{ij}_N(t)-\bbV^{ij}_N(t)|=O(N^{-\del_2})\quad\mbox{a.s.}
\end{equation}
where $\del_2=\del-\ka-\gam$ and we choose $\ka$ and $\gam$ so small that $\del_2>0$.

Next, observe that
\begin{eqnarray}\label{4.31}
&\sup_{0\leq t\leq T}|\int_0^tW_N^i(s)dW^j_N(s)-\bbV^{ij}_N(t)|\\
&\leq\sup_{0\leq t\leq T}|J_N^{(1;i,j)}(t)|+\sup_{0\leq t\leq m_NN^{-1}}|J_N^{(2;i,j)}(t)|+\sup_{0\leq t\leq T}|J_N^{(3;i,j)}(t)|
\nonumber\end{eqnarray}
where
\[
J_N^{(1;i,j)}(t)=\sum_{2\leq l\leq\iota_N(t)}\int_{(l-1)m_NN^{-1}}^{lm_NN^{-1}}\big(W^i_N(s)-
W^i_N\big(\frac {(l-1)m_N}N\big)\big)dW^j(s),
\]
\[
J_N^{(2;i,j)}(t)=\int_0^tW_N^i(s)dW^j_N(s)\,\,\mbox{and}\,\,
J_N^{(3;i,j)}(t)=\int_{(\iota_N(t)\vee 1)m_NN^{-1}}^{t}W_N^i(s)dW^j_N(s).
\]
By the standard (martingale) moment estimates for stochastic integrals (see, for instance, \cite{Mao}, Section 1.7),
\[
E\sup_{0\leq t\leq T}|J_N^{(1;i,j)}(t)|^{2M}\leq C_{15}(M,T)N^{-M\ka},
\]
\[
E\sup_{0\leq t\leq m_NN^{-1}}|J_N^{(2;i,j)}(t)|^{2M}\leq C_{15}(M,T)N^{-M\ka}\quad\mbox{and}
\]
\begin{eqnarray*}
&E\sup_{0\leq t\leq T}|J_N^{(3;i,j)}(t)|^{2M}\\
&\leq\sum_{2\leq l\leq\iota_N(T)}E\sup_{0\leq u\leq m_N}
|\int_{(l-1)m_NN^{-1}}^{((l-1)m_N+u)N^{-1}}W_N^i(s)dW^j_N(s)|^{2M}\\
&\leq C_{15}(M,T)N^{-(M-1)\ka}
\end{eqnarray*}
where $C_{15}(M,T)>0$ does not depend on $N$. Taking $\del_3<\frac 12\ka$ and $M\geq(2\ka+1)(\ka-2\del_3)^{-1}$ and
employing the Chebyshev inequality together with the Borel--Cantelli lemma in the same way as above, we conclude that
\begin{equation}\label{4.32}
\sup_{0\leq t\leq T}|J_N^{(1;i,j)}(t)|+\sup_{0\leq t\leq T}|J_N^{(2;i,j)}(t)|+\sup_{0\leq t\leq T}|J_N^{(3;i,j)}(t)|=
O(N^{-\del_3})\,\,\,\mbox{a.s.}
\end{equation}
This together with (\ref{4.2}), (\ref{4.30}) and (\ref{4.31}) completes the proof of (\ref{2.17}) in the supremum norm.

\subsection{$p/2$-variation norm estimates}\label{subsec4.2}
Next, we extend the supremum norm estimate of Section \ref{subsec4.1} to the $p/2$-variation norm estimate which
will yield (\ref{2.17}) of Theorem \ref{thm2.2}. First, we will derive certain H\" older continuity type estimates
for our sums. For $0\leq s<t\leq T$ and $i,j=1,...,e$ set
\[
\bar\bbS_N^{ij}(s,t)=N^{-1}\sum_{[sN]\leq k<l<[Nt]}\xi_i(k)\xi_j(l)-(t-s)\sum_{l=1}^\infty E(\xi_i(0)\xi_j(l)),
\]
and so
\begin{equation}\label{4.33}
\bbS_N^{ij}(s,t)=\bar\bbS_N^{ij}(s,t)+(t-s)\sum_{l=1}^\infty E(\xi_i(0)\xi_j(l)),
\end{equation}
recalling that by (\ref{2.12}) and (\ref{4.21}) the series in the right hand side of (\ref{4.33}) converges absolutely.
\begin{lemma}\label{lem4.1+}
There exists  a finite a.s. random variable $C^\bbS_{\al,\be}>0$ which does not depend on $n,m$ or $N$ such that
\begin{equation}\label{4.38}
\big\vert\bar\bbS_N^{ij}(\frac mN,\frac nN))\big\vert\leq C^\bbS_{\al,\be}|\frac nN-\frac mN|^{1-\be}\,\,\mbox{provided}\,\,
m+N^\al\geq n>m\geq 0,\,[TN]>n.
\end{equation}
\end{lemma}
\begin{proof}
We will estimate $\bar\bbS_N^{ij}(s,t)$ relying on Lemma \ref{3.4}.
 Set
\[
\mu_{kl}^{ij}=\xi_j(l)\sum_{r=k}^{l-1}\xi_i(r)\,\,\mbox{and}\,\,\Sig_N^{ij}(s,t)=N^{-1}
\sum_{l=[sN]}^{[tN]-1}(\mu^{ij}_{[sN]l}-E\mu^{ij}_{[sN]l}).
\]
 By (\ref{2.3}), (\ref{2.6}) and (\ref{4.21}) for $0\leq m<n<[TN]$,
\begin{equation}\label{4.34}
\big\vert\bar\bbS_N^{ij}(\frac mN,\frac nN)-\Sig_N^{ij}(\frac mN,\frac nN)\big\vert\leq (\frac nN-\frac mN)
\sum_{l=1}^\infty |E(\xi_i(0)\xi_j(l))|.
\end{equation}

Next, for any $\al,\be\in(0,1)$ by the Chebyshev inequality
\begin{eqnarray}\label{4.35}
&\cP=P\big\{\max_{m+N^\al\geq n>m\geq 0,\,[TN]>n}\frac {|\Sig_N^{ij}(\frac mN,\frac nN)|}{|(n-m)/N|^{1-\be}}>1\}\\
&\leq\sum_{m+N^\al\geq n>m\geq 0,\,[TN]>n}P\big\{ |\sum_{m\leq r<n}(\mu^{ij}_{mr}-E\mu^{ij}_{mr})|>N^\be(n-m)^{1-\be}\big\}
\nonumber\\
&\leq N^{-2\be M}\sum_{m+N^\al\geq n>m\geq 0,\,[TN]>n}(n-m)^{-2M(1-\be)}\nonumber\\
&\times E|\sum_{m\leq r<n}(\mu^{ij}_{mr}-E\mu^{ij}_{mr})|^{2M}.
\nonumber\end{eqnarray}
By Lemma \ref{lem3.4},
\begin{equation}\label{4.36}
E|\sum_{m\leq r<n}(\mu^{ij}_{mr}-E\mu^{ij}_{mr})|^{2M}\leq C_{16}(M)(n-m)^{2M}
\end{equation}
for some $C_{16}(M)>0$ which does not depend on $m,n$ or $N$.
This together with (\ref{4.35}) yields that
\begin{equation}\label{4.37}
\cP\leq C_{17}(M)N^{-2M\be(1-\al)+1+\al}
\end{equation}
where $ C_{17}(M)=C_{16}(M)2^{2M\be+1}(2M\be+1)^{-1}$. For any $\al,\be\in(0,1)$ we
pick up $M\geq 1$ such that $2M\be(1-\al)-1-\al\geq 2$ which makes the right hand side of (\ref{4.37}) a term of
a converging sequence. Thus, by the Borel--Cantelli lemma we conclude that there exists a finite a.s. random variable
$C^\Sig_{\al,\be}=C^\Sig_{\al,\be}(\om)$ such that for all $N\geq 1$,
\[
|\Sig_N^{ij}(\frac mN,\frac nN))|\leq C^\Sig_{\al,\be}|\frac nN-\frac mN|^{1-\be}\,\,\mbox{provided}\,\,
m+N^\al\geq n>m\geq 0,\,[TN]>n,
\]
which together with (\ref{4.33}) and (\ref{4.34}) yields (\ref{4.38}) for all $N\geq 1$.
\end{proof}

Next, we proceed similarly to Section \ref{subsec3.4}. For $0\leq s<t\leq T$ define
\[
\hat\bbW_N^{ij}(s,t)=\sum_{[Ns]\leq l<[Nt]}(W^j_N(\frac {l+1}N)-W^j_N(\frac lN))(W^i_N(\frac lN)-W^i_N(\frac {[Ns]}N))
\]
and
\[
\bar\bbW_N^{ij}(s,t)=\int_s^t(W^i_N(u)-W^i_N(s))dW_N^j(u)
\]
while $\hat\bbW_N^{ij}(0,t)$ and $\bar\bbW_N^{ij}(0,t)$ will be denoted, as before, just by $\hat\bbW_N^{ij}(t)$ and
 $\bar\bbW_N^{ij}(t)$, respectively. Let $0\leq t_0<t_1<...<t_m=T$ and observe that if $[t_qN]=[t_{q+1}N]$ then
 \[
 \hat\bbW_N^{ij}(t_q,t_{q+1})=0\,\,\mbox{and}\,\, |\bar\bbS_N^{ij}(t_q,t_{q+1})|=D_{ij}(t_{q+1}-t_q)\leq D_{ij}N^{-1}
 \]
 where $D_{ij}=|\sum_{l=1}^\infty E(\xi_i(0)\xi_j(l))|<\infty$. Hence,
\begin{eqnarray}\label{4.39}
&\sum_{0\leq q<m} |\bar\bbS_N^{ij}(t_q,t_{q+1})- \hat\bbW_N^{ij}(t_q,t_{q+1})|^{p/2}\\
&\leq\sum_{0\leq r<n}\big\vert\bar\bbS_N^{ij}(\frac {k_{r}}N,\frac {k_{r+1}}N)-\hat\bbW_N^{ij}(\frac {k_{r}}N,\frac {k_{r+1}}N)|^{p/2}\nonumber\\
&+J^{(0)}_N\leq J^{(0)}_N+J_N^{(1)}+2^{p/2-1}(J_N^{(2)}+J_N^{(3)}),\nonumber
\end{eqnarray}
where
\begin{equation}\label{4.39+}
J^{(0)}_N=D_{ij}^{p/2}\sum_{q:|t_{q+1}-t_q|<N^{-1}}|t_{q+1}-t_q|^{p/2}\leq D_{ij}^{p/2}TN^{-(\frac p2-1)},
\end{equation}
\[
J_N^{(1)}=\sum_{0\leq r<n,k_{r+1}-k_r>N^\al}\big\vert\bar\bbS_N^{ij}(\frac {k_{r}}N,\frac {k_{r+1}}N)-\hat\bbW_N^{ij}(\frac {k_{r}}N,\frac {k_{r+1}}N)\big\vert^{p/2},
\]
\[
J_N^{(2)}=\sum_{0\leq r<n,k_{r+1}-k_r\leq N^\al}\big\vert\bar\bbS_N^{ij}(\frac {k_{r}}N,\frac {k_{r+1}}N)\big\vert^{p/2}\,\,
\mbox{and},
\]
\[
J_N^{(3)}=\sum_{0\leq r<n,k_{r+1}-k_r\leq N^\al}\big\vert\hat\bbW_N^{ij}(\frac {k_{r}}N,\frac {k_{r+1}}N)|^{p/2},
\]
$k_r=[t_{q_r}N]$ and $0=t_{q_0}<t_{q_1}<...<t_{q_m}=T$ is the maximal subsequence of $t_0,t_1,...,t_m$ such that $[t_{q_r}N]
<[t_{q_{r+1}}N]$, $r=0,1,...,m-1$.

Observe that
\[
\bar\bbS^{ij}_N(s,t)=\bar\bbS^{ij}_N(t)-\bar\bbS^{ij}_N(s)-S^i_N(s)(S^j_N(t)-S^j_N(s)),
\]
\[
\bar\bbW^{ij}_N(s,t)=\bar\bbW^{ij}_N(t)-\bar\bbW^{ij}_N(s)-W^i_N(s)(W^j_N(t)-W^j_N(s))
\]
and
\[
\hat\bbW^{ij}_N(s,t)=\hat\bbW^{ij}_N(t)-\hat\bbW^{ij}_N(s)-W^i_N(\frac {[Ns]}N)(W^j_N(\frac {[Nt]}N)-W^j_N(\frac {[Ns]}N)).
\]
In order to estimate $J^{(1)}_N$ we use (\ref{2.16}) and (\ref{2.17}) for the supremum norm proved in Sections \ref{subsec3.3}
and \ref{subsec4.1} together with Lemma \ref{lem3.10} and
 observe that there exists no more than $[TN^{1-\al}]$ disjoint intervals $(k_{r},k_{r+1})$ in $[0,[TN]]$
with the length exceeding $N^\al$ which gives as in (\ref{3.30}) that
\begin{equation}\label{4.40}
J_N^{(1)}\leq C_{18}N^{1-\al-p\del/2}(1+\sup_{0\leq t\leq T}|S_N^j(t)|^{p/2}+\sup_{0\leq t\leq T}|W^i_N(t)|^{p/2}
+\sup_{0\leq t\leq T}|W^j_N(t)|^{p/2})
\end{equation}
where $C_{18}>0$ is an a.s. finite random variable which does not depend on $N$ or on the choice of $t_1,...,t_m$.
 Using (\ref{2.16}) we have
\begin{equation}\label{4.41}
\sup_{0\leq t\leq T}|S^j_N(t)|^{p/2}\leq 2^{p/2-1}(C_{19}N^{-p\del/2}+\sup_{0\leq t\leq T}|W^j_N(t)|^{p/2})\,\,
\mbox{whenever}\,\, 1\leq j\leq e
\end{equation}
where $C_{19}>0$ is an a.s. finite random variable which does not depend on $N$.
By the standard martingale moment estimates for the Brownian motion for any $M\geq 1$,
\[
E\sup_{0\leq t\leq T}|W_N^j(t)|^{pM}\leq C_{20}(M,T)<\infty\,\,\mbox{whenever}\,\, 1\leq j\leq e,
\]
where $C_{20}(M,T)>0$ does not depend on $N$. Applying as above the Chebyshev inequality and then the Borel--Cantelli
lemma we see that for any $\gam>0$,
\[
\sup_{0\leq t\leq T}|W_N^j(t)|=O(N^\gam)\quad\mbox{a.s.}
\]
This together with (\ref{4.40}) and (\ref{4.41}) gives
\begin{equation}\label{4.42}
J^{(1)}_N=O(N^{1-\al-p\del/2-p\gam/2})
\end{equation}
where we choose $\al$ so close to 1 that $1-\al-p\del/2<0$.

By (\ref{4.38}) we obtain similarly to (\ref{3.31}) that
\begin{eqnarray}\label{4.43}
&J^{(2)}_N\leq C^\bbS_{\al,\be}N^{-p(1-\be)/2}\sum_{0\leq j<n,\, k_{j+1}-k_j\leq N^\al}|k_{j+1}-k_j|^{p(1-\be)/2}\\
&\leq C^\bbS_{\al,\be}TN^{-(1-\al)(p/2-1-p\be/2)}\nonumber
\end{eqnarray}
where we assume that
\begin{equation}\label{4.44}
0<\be<1-\frac 2p
\end{equation}
which is consistent since $p>2$.

In order to estimate $J_N^{(3)}$ we proceed in the same way as in (\ref{3.33})--(\ref{3.35}) and (\ref{4.35})
writing
\begin{eqnarray}\label{4.45}
&\quad P\big\{\max_{m+N^\al\geq n>m\geq 0,\,[TN]>n}\frac {|\sum_{m\leq
 l<n}(W^j_N(\frac {l+1}N)-W^j_N(\frac lN))(W_N^i(\frac lN)-W^i_N(\frac mN))|}{|(n-m)/N|^{1-\be}}\\
 &>1\big\}\leq N^{2M(1-\be)}\sum_{m+N^\al\geq n>m\geq 0,\,[TN]>n}(n-m)^{-2M(1-\be)}\nonumber\\
&\times E\big\vert\sum_{m\leq
 l<n}(W^j_N(\frac {l+1}N)-W^j_N(\frac lN))(W_N^i(\frac lN)-W^i_N(\frac mN))\big\vert^{2M}\nonumber\\
&\leq C_{21}(M)N^{-2M\be}\sum_{m+N^\al\geq n>m\geq 0,\,[TN]>n}(n-m)^{2M\be}\nonumber\\
&\leq C_{21}(M)TN^{-2M\be(1-\al)+1+\al}\nonumber
\end{eqnarray}
where $C_{21}(M)>0$ does not depend on $N\geq 1$ and we rely on the Chebyshev inequality and the standard moment estimates
for the martingale $M_n=\sum_{m\leq l<n}(W^j_N(\frac {l+1}N)-W^j_N(\frac lN))(W_N^i(\frac lN)-W^i_N(\frac mN)$
 (see, for instance, \cite{Mao}) or, alternatively, use Lemma \ref{lem3.4}. Choosing $M\geq (3+\al)(2\be(1-\al))^{-1}$ we
 obtain in the right
 hand side of (\ref{4.45}) a term of a converging sequence and application of the Borel--Cantelli lemma provides us with a
 finite a.s. random variable $C^W_{\al,\be}>0$ such that for all $N\geq 1$,
 \begin{equation}\label{4.46}
 \big\vert\int_{m/N}^{n/N}(W^i_N(u)-W^i_N(m/N))dW_N^j(u)\big\vert\leq C^W_{\al,\be}\big\vert\frac nN-\frac mN\big\vert^{1-\be}
 \end{equation}
 whenever $N^\al\geq n-m>0,\, n<[TN],\, m\geq 0$. In the same way as in (\ref{4.43}) we have now
 \begin{equation}\label{4.47}
 J_N^{(3)}\leq C^W_{\al,\be}TN^{-(1-\al)(p/2-1-p\be/2)}.
 \end{equation}
 Collecting (\ref{4.33}), (\ref{4.39}), (\ref{4.42}), (\ref{4.43}) and (\ref{4.44}) we obtain (\ref{2.17}) for some $\del>0$
 which together with Lemma \ref{lem3.10} completes the proof of Theorem \ref{thm2.2}.  \qed

\section{Continuous time case: proof of Theorem \ref{thm2.5} }\label{roughsec5}\setcounter{equation}{0}
\subsection{Basic estimates}\label{subsec5.1}
First observe that by (\ref{2.6}), (\ref{2.19}) and (\ref{2.21}),
\begin{eqnarray}\label{5.1}
&|(\eta\circ\vt^m(\om)-E(\eta\circ\vt^m|\cF_{m-n,m+n})(\om)|\\
&=|\int_0^{\tau\circ\vt^m(\om)}\xi(s,\vt^m\om)ds-E(
\int_0^{\tau\circ\vt^m(\om)}\xi(s,\vt^m\om)ds|\cF_{m-n,m+n})(\om)|\nonumber\\
&\leq 2L|\tau\circ\vt^m(\om)-E(\tau\circ\vt^m|\cF_{m-n,m+n})(\om)|+|\int_0^{E(\tau\circ\vt^m(\om)|\cF_{m-n,m+n})}
(\xi(s,\vt^m\om)\nonumber\\
&-E(\xi(s,\vt^m\om)|\cF_{m-n,m+n})(\om))ds|\leq(2L+\hat L)\rho(n),\nonumber
\end{eqnarray}
and so the sequence $\eta(k),\, k\in\bbZ$ satisfies the conditions of Theorem \ref{thm2.1}.

With a slight abuse of notations we set now for each $t\in[0,T]$,
\[
S_\ve(t)=\ve\sum_{0\leq k<[\ve^{-2}t]}\eta(k)\,\,\mbox{and}\,\,\bbS^{ij}_\ve(t)=\ve^2\sum_{0\leq k<l<[\ve^{-2}t]}
\eta_i(k)\eta_j(l).
\]
In view of the assumptions of Theorem \ref{thm2.4} we can apply Theorem \ref{thm2.2} to $S_{1/\sqrt N}$ and
$\bbS_{1/\sqrt N}$ to obtain
\begin{equation}\label{5.2}
\| S_{1/\sqrt N}-W_N\|_{p,[0,T]}=O(N^{-\del})\quad\mbox{ a.s.}
\end{equation}
and
\begin{equation}\label{5.3}
\max_{0\leq i,j\leq d}\|\bbS^{ij}_{1/\sqrt N}-\bbW^{ij}_N\|_{p/2,[0,T]}=O(N^{-\del})\quad\mbox{ a.s.}
\end{equation}
where $W_N=N^{-1/2}\cW(Nt)$ and $\cW$ is the universal Brownian motion constructed via Theorem \ref{thm3.8} for
the sequence $\eta(k),\, k\in\bbZ$ in place of the sequence $\xi(k),\, k\in\bbZ$ considered there. Here
\[
\bbW^{ij}_N(t)=\int_0^tW_N^i(s)dW_N^j(s)+t\sum_{l=1}^\infty E(\eta_i(0)\eta_j(l)).
\]

Next, set $N_\ve=[\ve^{-2}]$. Then, by (\ref{2.6}) and (\ref{2.19}),
\begin{eqnarray}\label{5.4}
&|S_\ve(t)-S_{1/N_\ve}(t)|\leq |\ve\sqrt {N_\ve}-1|N^{-1}_\ve|\sum_{0\leq k<[N_\ve t]}\eta(k)|\\
&+\ve|\sum_{[N_\ve t]\leq k<[\ve^{-2}t]}\eta(k)|\leq\ve^2|S_{1/\sqrt {N_\ve}}(t)|+\ve(T+1)L\hat L.\nonumber
\end{eqnarray}
This together with (\ref{5.2}) and the arguments similar to Section \ref{subsec3.4} yields easily that
\begin{equation}\label{5.5}
\| S_\ve-W_{[\ve^{-2}]}\|_{p,[0,T]}=O(\ve^\del)\quad\mbox{a.s.}
\end{equation}
for some $\del>0$ where a.s. is simultaneously over $\ve\in(0,1)$.

Next,
\begin{eqnarray}\label{5.6}
&|\bbS^{ij}_\ve(t)-\bbS^{ij}_{1/N_\ve}(t)|\leq |\ve^2\sqrt {N_\ve}-1|N^{-1}_\ve|\sum_{0\leq k<l<[N_\ve t]}\eta_i(k)\eta_j(l)|\\
&+\ve^2|\sum_{[N_\ve t]\leq l<[\ve^{-2}t]}\eta_j(k)\sum_{k=0}^{l-1}\eta_i(k)|\nonumber\\
&\leq\ve^2|\bbS^{ij}_{1/\sqrt {N_\ve}}(t)|+\ve L\hat L|\sum_{[N_\ve t]\leq l<[\ve^{-2}t]}S^i_{1/\sqrt {N_\ve}}((l-1)\ve^2)|.\nonumber
\end{eqnarray}
Relying on (\ref{5.2}), (\ref{5.3}) and the arguments similar to Section \ref{subsec4.2} we derive easily that
\begin{equation}\label{5.7}
\| \bbS^{ij}_\ve-\bbW^{ij}_{[\ve^{-2}]}\|_{p/2,[0,T]}=O(\ve^\del)\quad\mbox{a.s.}
\end{equation}
for some $\del>0$ where a.s. is simultaneously over $\ve\in(0,1)$. In fact, employing the standard moment estimates for the Brownian motion together with its H\" older continuity and the Borel--Cantelli lemma
we can replace $W_{[\ve^{-2}]}(t)=[\ve^{-2}]^{-1/2}\cW([\ve^{-2}]t)$
 in (\ref{5.5}) by $W^\ve(t)=\ve\cW(\ve^{-2}t)$ and $\bbW^{ij}_{[\ve^{-2}]}$ in (\ref{5.7}) by
 \[
 \tilde\bbW_{ij}^\ve(t)=\int_0^tW_i^\ve(s)dW^\ve_j(s)+t\sum_{l=1}^\infty E(\eta_i(0)\eta_j(l)).
 \]
 Hence, in addition to (\ref{5.5}) and (\ref{5.7}) we have also
 \begin{equation}\label{5.8}
 \| S_\ve-W^\ve\|_{p,[0,T]}=O(\ve^\del)\quad\mbox{a.s.}
 \end{equation}
 and
 \begin{equation}\label{5.9}
\| \bbS^{ij}_\ve-\tilde\bbW^\ve_{ij}\|_{p/2,[0,T]}=O(\ve^\del)\quad\mbox{a.s.}
\end{equation}

\subsection{A renewal type lemma}\label{subsec5.2}

The following result will be used several times in this section and though it is essentially known we will provide its self-contained proof for completeness.
\begin{lemma}\label{lem5.3}
Let $n(s)=n(s,\om)=0$ if $\tau(\om)>s$ and
\[
n(s)=n(s,\om)=\max\{ k:\,\sum_{j=0}^{k-1}\tau\circ\vt^j(\om)\leq s\}.
\]
Then for any $M\geq 1$ and $s,t\geq 0$,
\begin{equation}\label{5.12}
E|n(s\bar\tau)-s|^{2M}\leq K(M)s^M\,\,\mbox{and}\,\, E\sup_{0\leq s\leq t}|n(s\bar\tau)-s|^{2M}\leq K(M)t^{M+1},
\end{equation}
where $K(M)>0$ does not depend on $s$ and $t$, and for any $\gam>0$,
\begin{equation}\label{5.13}
|n(s\bar\tau)-s|=O(s^{\frac 12+\gam})\quad\mbox{a.s.}
\end{equation}
\end{lemma}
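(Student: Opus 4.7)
Set $\chi_j = \tau\circ\vt^j - \bar\tau$ and $T_k = \sum_{j=0}^{k-1}\tau\circ\vt^j$, so $T_k - k\bar\tau = \sum_{j=0}^{k-1}\chi_j$. The defining relation $T_{n(u)}\leq u < T_{n(u)+1}$ at $u=s\bar\tau$, combined with the two-sided bound $\hat L^{-1}\leq\tau\leq\hat L$, yields the deterministic a priori estimate $s\bar\tau\hat L^{-1}-1\leq n(s\bar\tau)\leq s\bar\tau\hat L$ and the identity
\begin{equation*}
\bar\tau\,|s-n(s\bar\tau)|\leq\max\bigl(|T_{n(s\bar\tau)}-n(s\bar\tau)\bar\tau|,\,|T_{n(s\bar\tau)+1}-(n(s\bar\tau)+1)\bar\tau|\bigr)+\bar\tau.
\end{equation*}
Thus the problem reduces to controlling the centered partial sums $T_k-k\bar\tau$ for $k$ in the range $[0,s\bar\tau\hat L+1]$.

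Next I would invoke Lemma \ref{lem3.4} applied with $\mu(k):=\chi_k$ (and the $\zeta$-sequence trivialized, as noted in the statement of that lemma). All three hypotheses are met: $\chi_k$ is centered by definition of $\bar\tau$, uniformly bounded by $2\hat L$ (so all moment conditions hold), and satisfies the $L^\infty$-approximation bound $\|\chi_k - E(\chi_k|\cF_{k-n,k+n})\|_\infty\leq\rho(n)$ by (\ref{2.21}), while (\ref{2.12}) delivers $\sum n\rho(n)<\infty$. Lemma \ref{lem3.4} therefore gives
\begin{equation*}
E\max_{0\leq k\leq N}|T_k-k\bar\tau|^{2M}\leq C(M)N^M.
\end{equation*}
Combining this with the above deterministic inequality and the a priori bound $n(s\bar\tau)+1\leq s\bar\tau\hat L+1$ yields the pointwise moment estimate in (\ref{5.12}). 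The same reasoning, with the deterministic inequality taken in the supremum form
\begin{equation*}
\sup_{0\leq s\leq t}|s-n(s\bar\tau)|\leq\bar\tau^{-1}\max_{0\leq k\leq t\bar\tau\hat L+1}|T_k-k\bar\tau|+1,
\end{equation*}
gives the supremum estimate (with room to spare: one obtains $K(M)t^M$, which implies the stated $K(M)t^{M+1}$ for $t\geq 1$, the only nontrivial regime).

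Finally, for the a.s. statement (\ref{5.13}), I would apply Chebyshev's inequality together with the moment bound at integer times $s=n\in\bbN$: for $M\geq 1/\gam$,
\begin{equation*}
P\{|n(n\bar\tau)-n|>n^{\frac 12+\gam}\}\leq K(M)n^{-2M\gam},
\end{equation*}
which is summable, so the Borel--Cantelli lemma gives $|n(n\bar\tau)-n|=O(n^{1/2+\gam})$ a.s. along integers. Monotonicity of $s\mapsto n(s\bar\tau)$ together with the trivial jump bound $|n((n+1)\bar\tau)-n(n\bar\tau)|\leq\hat L\bar\tau+1$ interpolates this to all real $s$. The main delicate point is verifying that $\chi_k=\tau\circ\vt^k-\bar\tau$ genuinely inherits the approximation property needed to invoke Lemma \ref{lem3.4}; once this is noted from (\ref{2.21}), the rest is routine. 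A secondary cosmetic issue is the mild mismatch between the natural $t^M$ bound and the stated $t^{M+1}$ one, but the latter is weaker and so poses no obstacle.
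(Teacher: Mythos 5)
Your proof is correct and in fact delivers a slightly sharper supremum bound ($K(M)t^M$ rather than the stated $K(M)t^{M+1}$), but it reaches the estimate by a genuinely different reduction than the paper. The paper proves the pointwise moment bound at integer times $m$ by writing $m^{-M}E|n(m\bar\tau)-m|^{2M}\leq 1+\sum_{k\geq 1}P\{|n(m\bar\tau)-m|>k^{1/2M}\sqrt m\}$, inserting the event inclusions $\{|n(m\bar\tau)-m|>k^{1/2M}\sqrt m\}\subset\Gam_k\cup\Del_k$ with $\Gam_k,\Del_k$ phrased in terms of centered partial sums of $\tau\circ\vt^j$, bounding $P(\Gam_k),P(\Del_k)$ by Chebyshev with a $6M$-th moment estimate from Lemma \ref{lem3.4}, and summing the resulting $O(k^{-3/2})$ tail; the supremum estimate is then obtained crudely by replacing $E\max_m$ with $\sum_m E$. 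You instead extract the pathwise sandwich
\[
\bar\tau\,|s-n(s\bar\tau)|\leq\max\bigl(|T_{n(s\bar\tau)}-n(s\bar\tau)\bar\tau|,\,|T_{n(s\bar\tau)+1}-(n(s\bar\tau)+1)\bar\tau|\bigr)+\bar\tau
\]
from the defining renewal inequalities $T_{n(u)}\leq u<T_{n(u)+1}$, dominate both entries by $\max_{0\leq k\leq s\bar\tau\hat L+1}|T_k-k\bar\tau|$ via the a priori range $s\bar\tau\hat L^{-1}-1\leq n(s\bar\tau)\leq s\bar\tau\hat L$, and apply Lemma \ref{lem3.4} once. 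Both routes rest on the same verification that $\chi_k=\tau\circ\vt^k-\bar\tau$ is centered, bounded, and inherits the $\rho$-approximation property from \eqref{2.21} so that Lemma \ref{lem3.4} is applicable --- you rightly flag this as the one substantive check. What your approach buys is a shorter, purely deterministic reduction with no tail summation, which also yields the pointwise and supremum bounds in one stroke and avoids the extra factor of $t$; the paper's version, having proved only the pointwise estimate, has to pay $\sum_m m^M\sim t^{M+1}$ to get the supremum. Your concluding Chebyshev--Borel--Cantelli step for \eqref{5.13}, together with the monotonicity-plus-jump interpolation to non-integer $s$, matches the paper's reduction to integer $s$ at the outset. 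No gaps.
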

\begin{proof}
Observe first that without loss of generality it suffices to prove (\ref{5.12}) and (\ref{5.13}) for $s=k,\, k=1,2,...$
since $n(s\bar\tau)-n([s]\bar\tau)\leq\hat L^2$ in view of (\ref{2.19}). Next,
\begin{equation}\label{5.14}
m^{-M}E|n(m\bar\tau)-m|^{2M}\leq 1+\sum_{k=1}^\infty P\{ |n(m\bar\tau)-m|>k^{1/2M}\sqrt m\}.
\end{equation}
Now we have the following events inclusions
\begin{eqnarray*}
&\{|n(m\bar\tau)-m|>k^{1/2M}\sqrt m\}\subset\{\sum_{0\leq j\leq m+k^{1/2M}\sqrt m}\tau\circ\vt^j<m\bar\tau\}\\
&\cup\{\sum_{0\leq j\leq m-k^{1/2M}\sqrt m}\tau\circ\vt^j>m\bar\tau\}\subset\Gam_k\cup\Del_k
\end{eqnarray*}
where
\[
\Gam_k=\{\sum_{0\leq j\leq m+k^{1/2M}\sqrt m}(\tau\circ\vt^j-\bar\tau)<-[k^{1/2M}\sqrt m]\bar\tau\}
\]
and
\[
\Del_k=\{\sum_{0\leq j\leq m-k^{1/2M}\sqrt m}(\tau\circ\vt^j-\bar\tau)>[k^{1/2M}\sqrt m]\bar\tau\}.
\]

Next, by Lemma \ref{lem3.4},
\[
E(\sum_{0\leq j\leq n-1}(\tau\circ\vt^j-\bar\tau))^{6M}\leq C^\tau_1(3M)n^{3M}
\]
where the index $\tau$ in $C_1^\tau$ means that we apply Lemma \ref{lem3.4} for sums of $\tau$'s in place of $\xi$'s there.
Hence, by the Chebyshev inequality for $k,m\geq 1$,
\begin{eqnarray*}
&\max(P(\Gam_k),\, P(\Del_k))\leq C_1^\tau(3M)(m+k^{1/2M}\sqrt m)^{3M}[k^{1/2M}\sqrt m]^{-6M}\bar\tau^{-6M}\\
&\leq\tilde K(M)k^{-3/2}
\end{eqnarray*}
for some $\tilde K(M)>0$ which does not depend on $k$ and $m$. Summing in $k\geq 1$ we obtain the
first estimate in (\ref{5.12}) from (\ref{5.14}). The second estimate in (\ref{5.12}) follows by
\[
E\max_{0\leq m\leq[t]+1}|n(m\bar\tau)-m|^{2M}\leq\sum_{0\leq m\leq[t]+1}E|n(m\bar\tau)-m|^{2M}\leq K(M)
\sum_{0\leq m\leq[t]+1}m^M.
\]

Finally, by (\ref{5.12}) and the Chebyshev inequality,
\[
P\{|n(m\bar\tau)-m|>m^{\frac 12+\gam}\}\leq K(M)m^{-M(1+2\gam)}m^M=K(M)m^{-2M\gam}.
\]
Choosing $M\geq\gam^{-1}$ we obtain in the right hand side here a term of a converging sequence and by the
Borel--Cantelli lemma (\ref{5.13}) follows.
\end{proof}

\subsection{Proof of (\ref{2.25}) and (\ref{2.26}) in the supremum norm}\label{subsec5.3}

Set
\[
U^\ve(t)=\ve\sum_{0\leq k<n(t\bar\tau\ve^{-2})}\eta(k)\,\,\mbox{and}\,\,\bbU^\ve_{ij}(t)=\ve^2\sum_{0\leq k<l<
n(t\bar\tau\ve^{-2})}\eta_i(k)\eta_j(l).
\]
Then for $M\geq 1$,
\begin{eqnarray*}
&\sup_{0\leq t\leq T}|S_\ve(t)-U^\ve(t)|^{2M}\\
&=\ve^{2M}\sup_{0\leq t\leq T}|\sum_{\min([\ve^{-2}t],n(t\bar\tau\ve^{-2}))
\leq k<\max([\ve^{-2}t],n(t\bar\tau\ve^{-2}))}\eta(k)|^{2M}\\
&\leq\ve^{2M}\sum_{0\leq m\leq\ve^{-2}T}\sup_{0\leq t\leq T}\max_{0\leq k\leq |n(t\bar\tau\ve^{-2})-\ve^{-2}t|}
|\sum_{l=0}^k\eta(l+m)|^{2M}\\
&\leq\ve^{2M}\sum_{0\leq m\leq\ve^{-2}T}\max_{0\leq k\leq \ve^{-3/2}}|\sum_{l=0}^k\eta(l+m)|^{2M}\\
&+\ve^{2M}\sum_{0\leq m\leq\ve^{-2}T}\max_{0\leq k\leq T\hat L^2\ve^{-2}}|\sum_{l=0}^k\eta(l+m)|^{2M}
\bbI_{\sup_{0\leq t\leq T}|n(t\bar\tau\ve^{-2})-\ve^{-2}t|>\ve^{-3/2}}
\end{eqnarray*}
since $|n(t\bar\tau\ve^{-2})-\ve^{-2}t|\leq T\hat L^2\ve^{-2}$. Applying Lemmas \ref{lem3.3}, \ref{lem3.4} or
Theorem B from \cite{Ser} to sums $\sum_{l=0}^k\eta(l)$ and taking into account stationarity of the sequence $\eta(l),\, l\geq 0$
we obtain that
\[
E\max_{0\leq k\leq\ve^{-3/2}}|\sum_{l=0}^k\eta(l+m)|^{2M}\leq C_{22}(M)\ve^{-3M/2}
\]
for some $C_{22}(M)>0$ which does not depend on $\ve>0$. Using, in addition, Lemma \ref{lem5.3} together with the
Cauchy-Schwarz and Chebyshev inequalities we derive that
\begin{eqnarray*}
&E(\max_{0\leq k\leq T\hat L^2\ve^{-2}}|\sum_{l=0}^k\eta(l+m)|^{2M}\bbI_{\sup_{0\leq t\leq T}
|n(t\bar\tau\ve^{-2})-\ve^{-2}t|>\ve^{-3/2}})\\
&\leq\big(E\max_{0\leq k\leq T\hat L\ve^{-2}}|\sum_{l=0}^k\eta(l+m)|^{2(M+1)}\big)^{\frac M{M+1}}\\
&\times\big(P\{\sup_{0\leq t\leq T}|n(t\bar\tau\ve^{-2})-\ve^{-2}t|>\ve^{-3/2}\}\big)^{\frac 1{M+1}}\\
&\leq\tilde C(M)(T\hat L^2\ve^{-2}+1)^M\ve^{3(M+1)}(E\sup_{0\leq t\leq T}|n(t\bar\tau\ve^{-2})-
\ve^{-2}t|^{2(M+1)^2})^{\frac 1{M+1}}\\
&\leq C_{23}(M)\ve^{-M}
\end{eqnarray*}
for some $\tilde C(M),\, C_{23}(M)>0$ which do not depend on $\ve\in(0,1)$.

Hence,
\[
\sup_{0\leq t\leq T}|S_\ve(t)-U^\ve(t)|^{2M}\leq C_{22}(M)T\ve^{\frac M2-2}+C_{23}(M)\ve^{M-2},
\]
and so by the Chebyshev inequality,
\[
P\{\sup_{0\leq t\leq T}|S_\ve(t)-U^\ve(t)|\geq\ve^{1/8}\}\leq(C_{22}(M)+ C_{23}(M))T\ve^{\frac M4-2}.
\]
Taking $M\geq 24$ and $\ve=\ve_n=n^{-1/2}$ we obtain by the Borel--Cantelli lemma that
\[
\sup_{0\leq t\leq T}|S_{\ve_n}(t)-U^{\ve_n}(t)|=O(n^{-1/16})\,\,\mbox{a.s.}
\]
When $(n+1)^{-1/2}\leq\ve\leq n^{-1/2}$ then
\[
|S_\ve(t)-S_{\ve_n}(t)|\leq (2T+1)Ln^{-1/2}\,\,\mbox{and}\,\,|U^\ve(t)-U^{\ve_n}(t)|\leq (2T+1)L\hat Ln^{-1/2},
\]
and so
\begin{equation}\label{5.14+}
\sup_{0\leq t\leq T}|S_\ve(t)-U^\ve(t)|=O(\ve^{1/8})\,\,\mbox{a.s.}
\end{equation}

Next we estimate
\[
\sup_{0\leq t\leq T}|\bbS_\ve^{ij}(t)-\bbU^\ve_{ij}(t)|^{2M}\leq 2^{2M-1}\ve^{4M}(I^{2M}_{\ve,1}+I^{2M}_{\ve,2}I^{2M}_{\ve,3})
\]
where
\begin{eqnarray*}
&I_{\ve,1}=\sup_{0\leq t\leq T}|\sum_{\min([\ve^{-2}t],n(t\bar\tau\ve^{-2}))
\leq k<l<\max([\ve^{-2}t],n(t\bar\tau\ve^{-2}))}\eta_i(k)\eta_j(l)|,\\
&I_{\ve,2}=|\sum_{0\leq k<\min([\ve^{-2}T],n(T\bar\tau\ve^{-2}))}\eta_i(k)|\,\,\mbox{and}\\
&I_{\ve,3}=\sup_{0\leq t\leq T}|\sum_{\min([\ve^{-2}t],n(t\bar\tau\ve^{-2}))
\leq l<\max([\ve^{-2}t],n(t\bar\tau\ve^{-2}))}\eta_j(l)|.
\end{eqnarray*}
Similarly to the above
\[
I^{2M}_{\ve,1}\leq\sum_{0\leq m\leq\ve^{-2}T}\sum_{0\leq n\leq\ve^{-3/2}}J^{2M}(m,n)+\sum_{0\leq m\leq\ve^{-2}T}
\sum_{0\leq n\leq L\hat LT\ve^{-2}}J_\ve^{2M}(m,n)
\]
where
\begin{eqnarray*}
&J(m,n)=\sum_{0\leq k<l<n}\eta_i(k+m)\eta_j(l+m)\,\,\mbox{and}\\
& J_\ve(m,n)=J(m,n)\bbI_{\sup_{0\leq t\leq T}|n(t\bar\tau\ve^{-2})-\ve^{-2}t|>\ve^{-3/2}}.
\end{eqnarray*}
By the stationarity of the sequence $\eta(r),\, r=0,1,...$,
\[
EJ^{2M}(m,n)=EJ^{2M}(0,n)\leq 2^{2M-1}(EJ^{2M}_1(n)+EJ_2^{2M}(n))
\]
where
\[
J_1(n)=J(0,n)-J_2(n)\,\,\mbox{and}\,\, J_2(n)=\sum_{0\leq k<l<n}E(\eta_i(k)\eta_j(l)).
\]
By Lemma \ref{lem3.4},
\[
EJ_1^{2M}(n)\leq C_{24}(M)n^{2M}
\]
for some $C_{24}(M)>0$ which does not depend on $n$.

In view of the assumptions of Theorem \ref{thm2.4} on the coefficients $\phi$ and $\rho$ the estimates (\ref{4.21})
and (\ref{4.22}) considered for the sequence $\eta(r),\, r\geq 0$ (in place of $\xi(r),\, r\geq 0$ there) hold true,
as well, which implies that
\[
|J_2(n)|\leq C_{25}n
\]
for some $C_{25}>0$ which does not depend on $n$. Now, by the Cauchy--Schwarz and Chebyshev inequalities, similarly to the above,
\begin{eqnarray*}
&EJ_\ve^{2M}(m,n)=E(J_1(n)+J_2(n))^{2M}\bbI_{\sup_{0\leq t\leq T}|n(t\bar\tau\ve^{-2})-\ve^{-2}t|>\ve^{-3/2}}\\
&\leq\big(E(J_1(n)+J_2(n))^{2(M+1)})^{\frac M{M+1}}\big(P\{\sup_{0\leq t\leq T}
|n(t\bar\tau\ve^{-2})-\ve^{-2}t|>\ve^{-3/2}\}\big)^{\frac 1{M+1}}\\
&\leq C_{26}(M)n^{2M}\ve^M
\end{eqnarray*}
for some $C_{26}(M)>0$ which does not depend on $n$ and $\ve$. Collecting the above inequalities we obtain that,
\[
\ve^{4M}EI^{2M}_{\ve,1}\leq C_{27}(M)\ve^{M-4}
\]
for some $ C_{27}(M)>0$ which does not depend on $\ve$.

Next, relying on Lemma \ref{lem3.4} considered for the sequence $\eta(r),\, r\geq 0$ and taking into account that
$n(T\bar\tau\ve^{-2})\leq T\hat L^2\ve^{-2}$ we obtain that
\[
EI^{2M}_{\ve,2}\leq C_{28}\ve^{-2M}
\]
for some $ C_{28}>0$ which does not depend on $\ve$. Since,
\[
I_{\ve,3}=\ve^{-1}\sup_{0\leq t\leq T}|S_\ve(t)-U^\ve(t)|
\]
we can use the estimates at the beginning of this subsection to obtain that
\[
\ve^{4M}E(I^{2M}_{\ve,2}I^{2M}_{\ve,3})\leq C_{29}\ve^{\frac M2-2}
\]
for some $ C_{29}>0$ which does not depend on $\ve>0$. Proceeding similarly to the above with the Chebyshev
inequality and the Borel--Cantelli lemma we arrive finally at
\begin{equation}\label{5.14++}
\max_{0\leq i,j\leq d}\sup_{0\leq t\leq T}|\bbS_\ve(t)-\bbU^\ve_{ij}(t)|=O(\ve^{1/8})\quad\mbox{a.s.}
\end{equation}

Next, we compare $V^\ve$ with $U^\ve$ and $\bbV^\ve$ with $\bbU^\ve$ with $V^\ve$ and $\bbV^\ve$ defined before
Theorem \ref{thm2.5}. Clearly, by (\ref{2.6}) and (\ref{2.19}),
\begin{equation}\label{5.15}
\sup_{0\leq t\leq T}|V^\ve(t)-U^\ve(t)|\leq\ve L\hat L.
\end{equation}
Next,
\begin{equation}\label{5.16}
\sup_{0\leq t\leq T}|\bbV_{ij}^\ve(t)-tEF_{ij}-\bbU_{ij}^\ve(t)|\leq\sup_{0\leq t\leq T}|J_1(t)|+\sup_{0\leq t\leq T}|J_2(t)|
+\sup_{0\leq t\leq T}|J_3(t)|
\end{equation}
where $F_{ij}(\om)=\int_0^{\tau(\om)}\xi_j(s,\om)ds\int_0^s\xi_i(u,\om)du$,
\[
J_1(t)=\ve^2\int_{\sig(t\bar\tau\ve^{-2})}^{t\bar\tau\ve^{-2}}\xi_j(s)\int_0^s\xi_i(u)duds,\,\sig(s)=\sig(s,\om)=
\sum_{j=0}^{n(s)-1}\tau\circ\vt^j(\om),
\]
\[
J_2(t)=\ve^2(\int_0^{\sig(t\bar\tau\ve^{-2})}\xi_j(s)\int_{\sig(s)}^s\xi_i(u)duds-n(t\bar\tau\ve^{-2})EF_{ij})
\]
\[
\mbox{and}\quad J_3(t)=(tEF_{ij}-\ve^2n(t\bar\tau\ve^{-2})EF_{ij}).
\]

Now, by (\ref{2.6}) and (\ref{2.19}),
\begin{eqnarray}\label{5.17}
&|J_1(t)|\leq L\ve^2\int_{\sig(t\bar\tau\ve^{-2})}^{t\bar\tau\ve^{-2}}ds|\int_0^s\xi_i(u)du|\\
&\leq L^2\hat L\ve^2+L\hat L\ve^2|\int_0^{t\bar\tau\ve^{-2}}\xi_i(u)du|\leq L^2\hat L\ve^2(1+\hat L)+L\hat L\ve^2
|\sum_{k=0}^{n(t\bar\tau\ve^{-2})}\eta_i(k)|.\nonumber
\end{eqnarray}
By (\ref{2.6}), (\ref{2.19}) and (\ref{5.13}) for any $\gam>0$,
\[
\ve^2\big\vert\sum_{k=0}^{n(t\bar\tau\ve^{-2})}\eta_i(k)-\sum_{k=0}^{[t\ve^{-2}]-1}\eta_i(k)\big\vert=O(\ve^{1-\gam})\,\,
\mbox{a.s.}
\]
It follows that
\begin{equation}\label{5.18}
\sup_{0\leq t\leq T}|J_1(t)|\leq C_{30}\ve^{1-\gam}+L\hat L\ve^2\max_{1\leq k\leq[T\ve^{-2}]}|\sum_{l=0}^{k-1}\eta_i(l)|
\end{equation}
for some a.s. finite random variable $C_{30}=C_{30}(\om)>0$ which does not depend on $\ve$.

Next, applying Lemma \ref{lem3.4} to the sequence $\eta(k),\, k\geq 0$ which is possible in view of (\ref{5.1}), we obtain
\[
E\max_{1\leq k\leq[T\ve^{-2}]}|\sum_{l=0}^{k-1}\eta_i(l)|^{2M}\leq\sum_{1\leq k\leq[T\ve^{-2}]}E|\sum_{l=0}^{k-1}\eta_i(l)|^{2M}
\leq C_1^\eta(M)T^M\ve^{-2M}
\]
where $C_1^\eta>0$ does not depend on $\ve$. Hence, by the Chebyshev inequality
\begin{equation}\label{5.19}
P\{\ve^2\max_{1\leq k\leq[T\ve^{-2}]}|\sum_{l=0}^{k-1}\eta_i(l)|>\ve^{1-\gam}\}\leq C_1^\eta(M)T^M\ve^{M\gam}.
\end{equation}
Taking $\ve=\ve_n=\frac 1{\sqrt n},\,\gam\in(0,1),\, M\geq 2/\gam$ and applying the Borel-Cantelli lemma we obtain that
\[
n^{-1}\max_{1\leq k\leq[Tn]}|\sum_{l=0}^{k-1}\eta_i(l)|=O(n^{-\frac 12(1-\gam)})\,\,\,\mbox{a.s.}
\]
If $\frac 1{\sqrt n}\leq\ve\leq\frac 1{\sqrt {n-1}}$ then
\begin{eqnarray*}
&\ve^2\max_{1\leq k\leq[T\ve^{-2}]}|\sum_{l=0}^{k-1}\eta_i(l)|\\
&\leq\frac 1{n-1}\max_{1\leq k\leq[Tn]}
|\sum_{l=0}^{k-1}\eta_i(l)|=O(n^{-\frac 12(1-\gam)})=O(\ve^{1-\gam})\,\,\mbox{a.s.},
\end{eqnarray*}
and so
\begin{equation}\label{5.20}
\sup_{0\leq t\leq T}|J_1(t)|=O(\ve^{1-\gam})\quad\mbox{a.s.}
\end{equation}

Next,
\begin{equation}\label{5.21}
|J_2(t)|=\ve^2|\sum_{k=0}^{n(t\bar\tau\ve^{-2})}(F_{ij}\circ\vt^k-EF_{ij})|\leq|J_4(t)|+2(L\hat L)^2\ve^2|n(t\bar\tau\ve^{-2})
-[t\ve^{-2}]|
\end{equation}
where
\[
J_4(t)=\ve^2\sum_{k=0}^{[t\ve^{-2}]}(F_{ij}\circ\vt^k-EF_{ij}).
\]
Now observe that by (\ref{2.6}), (\ref{2.19}) and (\ref{2.21}),
\begin{eqnarray}\label{5.22}
&|F_{ij}\circ\vt^k-E(F_{ij}\circ\vt^k|\cF_{k-n,k+n})|\\
&=|\int_0^{\tau\circ\vt^k(\om)}\xi_j(s,\vt\om)ds\int_0^s\xi_i(u,\vt^k\om)du\nonumber\\
&-E(\int_0^{\tau\circ\vt^k(\om)}\xi_j(s,\vt\om)ds\int_0^s
\xi_i(u,\vt^k\om)du|\cF_{k-n,k+n})|\nonumber\\
&\leq 2L\hat L|\tau\circ\vt^k(\om)-E(\tau\circ\vt^k|\cF_{k-n,k+n})(\om)|+G_{ij}(E(\tau\circ\vt^k|\cF_{k-n,k+n})(\om),\,\om)
\nonumber\end{eqnarray}
where
\begin{eqnarray*}
&G_{ij}(r,\om)=|\int_0^r\int_0^s\big (\xi_j(s,\vt^k\om)\xi_i(u,\vt^k\om)-E(\xi_j(s,\vt^k\om)\xi_i(u,\vt^k\om)|\cF_{k-n,k+n})\big)
dsdu|\\
&\leq 2Lr^2\rho(n).
\end{eqnarray*}
Hence, the left hand side of (\ref{5.22}) does not exceed $2L\hat L\rho(n)(1+\hat L)$, and so we can apply Lemma \ref{lem3.4}
with $F_{ij}\circ\vt^k$'s in place of $\xi(k)$'s to obtain that
\begin{eqnarray}\label{5.23}
&\,\,\,E\sup_{0\leq t\leq T}J_4^{2M}(t)\leq\ve^{4M}E\max_{1\leq n<[T\ve^{-2}]}|\sum_{k=0}^{n-1}(F_{ij}\circ\vt^k-EF_{ij})|^{2M}\\
&\leq\ve^{2M}C_1^F(M)T^M
\nonumber\end{eqnarray}
where $C_1^F>0$ does not depend on $\ve$. Arguing as for (\ref{5.20}) we see that for any $\gam>0$,
\[
\sup_{0\leq t\leq T}|J_4(t)|=O(\ve^{1-\gam})\quad\mbox{a.s.}
\]
which together with (\ref{5.13}) and (\ref{5.21}) gives that for any $\gam>0$,
\begin{equation}\label{5.24}
\sup_{0\leq t\leq T}|J_2(t)|=O(\ve^{1-\gam})\quad\mbox{a.s.}
\end{equation}

Estimating $J_3$ by (\ref{5.13}) we obtain that for any $\gam>0$,
\begin{equation}\label{5.25}
\sup_{0\leq t\leq T}|J_3(t)|\leq |EF_{ij}|\sup_{0\leq t\leq T}|t-\ve^2n(t\bar\tau\ve^{-2})|=O(\ve^{1-\gam})\quad\mbox{a.s.}
\end{equation}
Finally, collecting (\ref{5.8}), (\ref{5.9}), (\ref{5.14+}), (\ref{5.14++}), (\ref{5.15}), (\ref{5.18}), (\ref{5.24})
 and (\ref{5.25}) we obtain (\ref{2.25}) and (\ref{2.26}) but only for the supremum norm.

\subsection{Completing the proof of Theorem \ref{thm2.5}}\label{subsec5.4}

For $0\leq s<t$ set
\[
V^\ve(s,t)=V^\ve(t)-V^\ve(s)=\ve\int_{s\bar\tau\ve^{-2}}^{t\bar\tau\ve^{-2}}\xi(u)du,
\]
\begin{eqnarray*}
&U^\ve(s,t)=U^\ve(t)-U^\ve(s)=\ve\sum_{n(s\bar\tau\ve^{-2})\leq k<n(t\bar\tau\ve^{-2})}\eta(k)\\
&\mbox{and}\,\,\,\hat U(s,t)=\ve\sum_{[s\ve^{-2}]\leq k<[t\ve^{-2}]}\eta(k).
\end{eqnarray*}
First, observe that by (\ref{2.6}) for any $\be\in(0,1/2)$,
\begin{equation}\label{5.26}
|V^\ve(s,t)|\leq\ve^{-1}L\bar\tau(t-s)\leq L\bar\tau(t-s)^{\frac 12-\be}\ve^{-1}(t-s)^{\frac 12+\be}=O((t-s)^{\frac 12-\be})
\end{equation}
provided
\begin{equation}\label{5.27}
(t-s)=O(\ve^{\frac 2{1+2\be}}).
\end{equation}

Next, we are going to obtain H\" older type uniform estimates of $|V^\ve(s,t)|(t-s)^{(\frac 12-\be)}$ similar to (\ref{5.26})
for small $t-s$ satisfying
\begin{equation}\label{5.28}
s+\ve^{1-\al}\geq t\geq s+\ve^2>s\geq 0
\end{equation}
where $\al\in(0,1)$ is close to 1 and it is chosen similarly to Section \ref{subsec3.4}. Observe that by (\ref{2.6}) and (\ref{2.19}),
\[
|V^\ve(s,t)-V^\ve([s\ve^{-2}]\ve^2,\,[t\ve^{-2}]\ve^2)|\leq 2\ve L\hat L,
\]
and so under (\ref{5.28}),
\[
\frac {|V^\ve(s,t)|}{(t-s)^{\frac 12-\be}}\leq\sqrt 2\frac {|V^\ve([s\ve^{-2}]\ve^2,\,[t\ve^{-2}]\ve^2)|}{(
[t\ve^{-2}]-[s\ve^{-2}])^{\frac 12-\be}\ve^{1-2\be}}+2\ve^{2\be}L\hat L.
\]
Hence,
\begin{eqnarray}\label{5.29}
&\sup_{0\leq s<s+\ve^2\leq t\leq s+\ve^{1-\al},\, t\leq T}\frac {|V^\ve(s,t)|}{(t-s)^{\frac 12-\be}}\\
&\leq\sqrt 2\max_{0\leq k<l\leq k+\ve^{-(1+\al)},\, l\leq T\ve^{-2}}\frac {|V^\ve(k\ve^2,\,l\ve^2)|}
{(l-k)^{\frac 12-\be}\ve^{1-2\be}}+2\ve^{2\be}L\hat L.\nonumber
\end{eqnarray}

In order to estimate the moments of the right hand side of (\ref{5.29}) we introduce
\[
\hat V^\ve_{kl}(\om,u)=\ve\int_{k\bar\tau}^{l\bar\tau}\xi(v,(\om,u))dv=\ve\int_{k\bar\tau+u}^{l\bar\tau+u}\xi(v,(\om,0))dv
\]
and observe that by (\ref{2.6}) and (\ref{2.19}),
\begin{equation}\label{5.30}
|V^\ve(k\ve^2,l\ve^2)-\hat V^\ve_{kl}(\om,u)|\leq 2uL\ve\leq 2L\hat L\ve.
\end{equation}
Since $\xi$ is a stationary process on the probability space $(\hat\Om,\hat\cF,\hat P)$ we see that
\[
\int|\hat V^\ve_{kl}(\om,u)|^{2M}d\hat P(\om,u)=\int|\hat V^\ve_{0l-k}(\om,u)|^{2M}d\hat P(\om,u),
\]
and so by (\ref{5.30}),
\begin{eqnarray}\label{5.31}
&E|V^\ve(k\ve^2,l\ve^2)|^{2M}\leq 2^{4M-2}\hat L^2E|V^\ve(0,(l-k)\ve^2)|^{2M}\\
&+2^{4M-1}(2^{2M}+1)\hat L(L\hat L)^{2M}\ve^{2M}.\nonumber
\end{eqnarray}
By (\ref{5.15}),
\[
|V^\ve(0,(l-k)\ve^2)-U^\ve(0,(l-k)\ve^2)|\leq 2\ve L\hat L,
\]
and so
\begin{equation}\label{5.32}
E|V^\ve(0,(l-k)\ve^2)|^{2M}\leq 2^{2M-1}E|U^\ve(0,(l-k)\ve^2)|^{2M}+2^{4M-1}(L\hat L)^{2M}\ve^{2M}.
\end{equation}

In order to estimate the right hand side of (\ref{5.32}) we observe first that
\[
|U^\ve(0,(l-k)\ve^2)-\hat U^\ve(0,(l-k)\ve^2)|\leq\ve L\hat L|n((l-k)\bar\tau)-(l-k)|,
\]
and so
\begin{eqnarray}\label{5.33}
&E|U^\ve(0,(l-k)\ve^2)|^{2M}\leq 2^{2M-1}E|\hat U^\ve(0,(l-k)\ve^2)|^{2M}\\
&+2^{2M-1}(L\hat L)^{2M}\ve^{2M}E|n((l-k)\bar\tau)-(l-k)|^{2M}.\nonumber
\end{eqnarray}
By Lemma \ref{lem3.4} applied to sums of $\eta(k)$'s we obtain that
\begin{equation}\label{5.34}
E|\hat U^\ve(0,(l-k)\ve^2)|^{2M}\leq C_1^\eta(M)(l-k)^M\ve^{2M}
\end{equation}
where $C_1^\eta>0$ does not depend on $\ve, l$ and $k$. By (\ref{5.12}) and (\ref{5.31})--(\ref{5.34}),
\begin{equation}\label{5.35}
E|V^\ve(k\ve^2,l\ve^2)|^{2M}\leq C_{31}\ve^{2M}(l-k)^M
\end{equation}
for some $C_{31}>0$ which does not depend on $\ve,k$ and $l$. Hence, by the Chebyshev inequality
\begin{eqnarray}\label{5.36}
&P\big\{ \max_{0\leq k<l\leq k+\ve^{-(1+\al)},\, l\leq T\ve^{-2}}\frac {|V^\ve(k\ve^2,\,l\ve^2)|}
{(l-k)^{\frac 12-\be}\ve^{1-2\be}}>1\big\}\\
&\leq\sum_{0\leq k<l\leq k+\ve^{-(1+\al)},\, l\leq T\ve^{-2}}P\big\{\frac {|V^\ve(k\ve^2,\,l\ve^2)|}
{(l-k)^{\frac 12-\be}\ve^{1-2\be}}>1\big\}\nonumber\\
&\leq\sum_{0\leq k<l\leq k+\ve^{-(1+\al)},\, l\leq T\ve^{-2}}\ve^{-2M(1-2\be)}(l-k)^{-M(1-2\be)}E|V^\ve(k\ve^2,l\ve^2)|^{2M}
\nonumber\\
&\leq C_{31}\ve^{4\be M}\sum_{0\leq k<l\leq k+\ve^{-(1+\al)},\, l\leq T\ve^{-2}}(l-k)^{2\be M}\leq C_{31}T
\ve^{2\be M(1-\al)-4}.\nonumber
\end{eqnarray}

Now, take $\ve=\ve_N=\frac 1{\sqrt N}$ and $M\geq 4\be^{-1}(1-\al)^{-1}$, then the right hand side of (\ref{5.36}) is a term
of a converging sequence, and so by the Borel--Cantelli lemma there exists an a.s. finite random variable $C_{32}>0$ which
does not depend on $\ve,k$ and $l$ and such that
\[
|V^\ve(k\ve^2,l\ve^2)|\leq C_{32}(l\ve^2-k\ve^2)^{\frac 12-\be}
\]
whenever $\ve^{-(1+\al)}\geq l-k\geq 1$. Since for any $\frac 1{\sqrt {N+1}}\leq\ve\leq\frac 1{\sqrt N}$, we have
$N+1\geq\ve^{-2}\geq N$, and so
\[
\sup_{0\leq s<t\leq T}|V^\ve(s,t)-V^{\ve_N}(s,t)|\leq 2T\hat L\ve,
\]
we conclude from here and from (\ref{5.29}) that
\begin{equation}\label{5.37}
|V^\ve(s,t)|\leq C_{\al,\be}|t-s|^{\frac 12-\be}
\end{equation}
whenever (\ref{5.28}) holds true, where $C_{\al,\be}>0$ is a a.s. finite random variable which does not depend on
$s,t$ and $\ve$. Taking into account (\ref{5.26}) and (\ref{5.27}) we conclude that (\ref{5.37}) holds true just under
the condition $|t-s|\leq\ve^{1-\al}$.

Now we can complete the proof of (\ref{2.25}) from Theorem \ref{thm2.5}. Set $N_\ve=[\ve^{-2}]$ and define
$\hat W^\ve(t)=W^\ve(N_\ve^{-1}[N_\ve t])$. Let $0=t_0<t_1<...<t_m=T$.
Next, we proceed similarly to (\ref{3.29})--(\ref{3.35}) writing
\[
\sum_{0\leq i<m}|V^\ve(t_i,t_{i+1})-\hat W^\ve(t_i,t_{i+1})|^p\leq J^\ve_1+2^{p-1}(J_2^\ve+J_3^\ve)
\]
where for $\al\in(0,1)$,
\[
J_1^\ve=\sum_{0\leq i<m,\, t_{i+1}-t_i>N_\ve^{-(1-\al)}}|V^\ve(t_i,t_{i+1})-\hat W^\ve(t_i,t_{i+1})|^p,
\]
\[
J_2^\ve=\sum_{0\leq i<m,\, t_{i+1}-t_i\leq N_\ve^{-(1-\al)}}|V^\ve(t_i,t_{i+1})|^p\quad\mbox{and}
\]
\[
J_3^\ve=\sum_{0\leq i<m,\, t_{i+1}-t_i\leq N_\ve^{-(1-\al)}}|\hat W^\ve(t_i,t_{i+1})|^p.
\]

Taking into account that
\[
|V^\ve(t_i,t_{i+1})-V^\ve(N^{-1}_\ve[N_\ve t_i],N^{-1}_\ve[N_\ve t_{i+1}])|\leq 2\ve(1-\ve^2)^{-1}L\hat L,
\]
that there exists no more than $[TN_\ve^{1-\al}]$ intervals $[t_i,t_{i+1}]$ with $t_{i+1}-t_i>N_\ve^{-(1-\al)}$ and
relying on the supremum norm estimate in (\ref{2.25}) we obtain that
\begin{eqnarray*}
&J_1^\ve\leq 2^{2p-1}T(L\hat L)^p\ve^{p-2(1-\al)}(1-\ve^2)^{-p}\\
&+2^{p-1}\sum_{0\leq j<n,k_{j+1}-k_j>N_\ve^\al}\sum_{0\leq j<n,k_{j+1}-k_j>N^\al_\ve}|V^\ve(\frac {k_j}{N_\ve},
\frac {k_{j+1}}{N_\ve})-\hat W^\ve(\frac {k_j}{N_\ve},\frac {k_{j+1}}{N_\ve})|^p\\
&\leq 2^{2p-1}\ve^{-2(1-\al)}((L\hat L)^p\ve^p(1-\ve^2)^{-p}+O(\ve^{p\del}))
\end{eqnarray*}
where $k_j=[t_{i_j}N_\ve]$ and $0=t_{i_0}<t_{i_1}<...<t_{i_n}=T$ is the maximal subsequence of $t_0,t_1,...,t_m$ with
$[t_{i_{j+1}}N_\ve]>[t_{i_j}N_\ve]$.

Next, using (\ref{5.37}) we obtain similarly to (\ref{3.31}) that
\[
J^\ve_2\leq C^p_{\al,\be}T\ve^{2(1-\al)(\frac p2-1-p\be)}(1-\ve^2)^{-1}
\]
while proceeding similarly to (\ref{3.33})--(\ref{3.35}) we obtain also
\[
J^\ve_3\leq\tilde C^p_{\al,\be}T\ve^{2(1-\al)(\frac p2-1-p\be)}(1-\ve^2)^{-1}
\]
where $C_{\al,\be},\tilde C_{\al,\be}>0$ are a.s. finite random variables which do not depend on $\ve>0$ or the choice of
$t_1,...,t_m$. Taking $\al$ so close to 1 that $p\del>2(1-\al)$ and choosing $\be$ satisfying (\ref{3.32}) we obtain that
\[
\| V^\ve-\hat W^\ve\|_{p,[0,T]}=O(\ve^{\tilde\del})\quad\mbox{a.s.}
\]
for some $\tilde\del>0$ which does not depend on $\ve>0$. This together with Lemma \ref{lem3.10} completes
the proof of (\ref{2.25}) from Theorem \ref{thm2.5}.

It remains to complete the proof of (\ref{2.26}). For $0\leq s<t$ set
\begin{eqnarray*}
&\bbV^\ve_{ij}(s,t)=\ve^2\int_{s\bar\tau\ve^{-2}}^{t\bar\tau\ve^{-2}}\xi_j(v)dv\int_{s\bar\tau\ve^{-2}}^v\xi_i(u)du,\\
&\bbU^\ve_{ij}(s,t)=\ve^2\sum_{n(s\bar\tau\ve^{-2})\leq k<l<n(t\bar\tau\ve^{-2})}\eta_i(k)\eta_j(l),\\
&\hat\bbU^\ve_{ij}(s,t)=\ve^2\sum_{[s\ve^{-2}]\leq k<l<[t\ve^{-2}]}\eta_i(k)\eta_j(l)\quad\mbox{and}\\
&\hat\bbW_{ij}^\ve(s,t)=\sum_{[N_\ve s]\leq l<[N_\ve t]}(W^{\ve}_j(\frac {l+1}{N_\ve})-W^{\ve}_j(\frac l{N_\ve}))
(W_i^\ve(\frac l{N_\ve})-W^\ve_i(\frac {[N_\ve s]}{N_\ve}))
\end{eqnarray*}
while $\hat\bbW^\ve_{ij}(0,t)$ equals $\hat\bbW^\ve_{ij}(t)$ defined before Lemma \ref{lem3.10}.
By (\ref{2.6}) for any $\be\in(0,1)$,
\begin{equation}\label{5.38}
|\bbV^\ve_{ij}(s,t)|\leq\ve^{-2}L^2\bar\tau^2(t-s)^2\leq L^2\bar\tau^2(t-s)^{1-\be}\ve^{-2}(t-s)^{1+\be}=O((t-s)^{1-\be})
\end{equation}
provided
\begin{equation}\label{5.39}
(t-s)=O(\ve^{\frac 2{1+\be}}).
\end{equation}

Next, we are going to obtain H\" older type estimates of $|\bbV^\ve_{ij}(s,t)|(t-s)^{-(1-\be)}$ similar to (\ref{5.38}) for
$t-s$ satisfying (\ref{5.28}). Observe that by (\ref{2.6}) and (\ref{2.19}),
\begin{eqnarray*}
&|\bbV^\ve_{ij}(s,t)-\bbV^\ve_{ij}([s\ve^{-2}]\ve^2,\, [t\ve^{-2}]\ve^2 )|\leq\ve^2(L\hat L)^2+\ve^2L\int_{[t\ve^{-2}]\bar\tau}
^{t\ve^{-2}\bar\tau}dv|\int^v_{s\ve^{-2}\bar\tau}\xi_i(u)du|\\
&+\ve^2|\int_{[s\ve^{-2}]\bar\tau}^{[t\ve^{-2}]\bar\tau}\xi_j(v)dv\int^{s\ve^{-2}\bar\tau}_{[s\ve^{-2}]\bar\tau}\xi_i(u)du|
\nonumber\\
&\leq 4\ve^2(L\hat L)^2+\ve L\hat L(|V_i^\ve(s,t)|+|V_j^\ve(s,t)|),\nonumber
\end{eqnarray*}
and so under (\ref{5.28}),
\[
\frac {|\bbV^\ve_{ij}(s,t)|}{(t-s)^{1-\be}}\leq 2\frac {|\bbV^\ve_{ij}([s\ve^{-2}]\ve^2,\, [t\ve^{-2}]\ve^2)|}
{([t\ve^{-2}]-[s\ve^{-2}])^{1-\be}\ve^{2(1-\be)}}+4(L\hat L)^2\ve^{2\be}+L\hat L\ve^\be\frac {|V_i^\ve(s,t)|+|V_j^\ve(s,t)|}
{(t-s)^{\frac 12(1-\be)}}.
\]
Hence,
\begin{eqnarray}\label{5.40}
&\sup_{0\leq s<s+\ve^2\leq t\leq s+\ve^{1-\al},\, t\leq T}\frac {|\bbV^\ve_{ij}(s,t)|}{(t-s)^{1-\be}}\\
&\leq 2\max_{0\leq k<l\leq k+\ve^{-(1+\al)},\, l\leq T\ve^{-2}}\frac {|\bbV_{ij}^\ve(k\ve^2,\,l\ve^2)|}
{(l-k)^{1-\be}\ve^{2(1-\be)}}+4\ve^{2\be}L\hat L\nonumber\\
&+L\hat L\ve^\be\sup_{0\leq s<s+\ve^2\leq t\leq s+\ve^{1-\al},\, t\leq T} \frac {|V_i^\ve(s,t)|+|V_j^\ve(s,t)|}
{(t-s)^{\frac 12(1-\be)}}.\nonumber
\end{eqnarray}

In order to estimate the moments of the right hand side of (\ref{5.40}) we introduce
\[
\hat\bbV_{ij}^\ve(k,l)(\om,u)=\ve^2\int_{k\bar\tau}^{l\bar\tau}\xi(v,(\om,u))dv\int_{k\bar\tau}^v\xi(w,(\om,u))dw
\]
and observe that by (\ref{2.6}) and (\ref{2.19}) similarly to the above,
\begin{equation}\label{5.41}
|\bbV^\ve_{ij}(k\ve^2,l\ve^2)-\hat\bbV_{ij}^\ve(k,l)(\om,u)|\leq 4\ve^2(L\hat L)^2+\ve L\hat L(|
V^\ve_i(k\ve^2,l\ve^2)|+|V^\ve_j(k\ve^2,l\ve^2)|).
\end{equation}
Again, we use the stationarity of the process $\xi$ on the probability space $(\hat\Om,\hat\cF,\hat P)$ to obtain that
\[
\int|\hat\bbV_{ij}^\ve(k,l)(\om,u)|^{2M}d\hat P(\om,u)=\int|\hat\bbV_{ij}^\ve(0,l-k)(\om,u)|^{2M}d\hat P(\om,u),
\]
and so by (\ref{5.41}),
\begin{eqnarray}\label{5.42}
&E|\bbV_{ij}^\ve(k\ve^2,l\ve^2)|^{2M}\leq 2^{4M-2}\hat L^2E|\bbV_{ij}^\ve(0,(l-k)\ve^2)|^{2M}\\
&+2^{6M-1}(2^{2M}+1)\hat L(L\hat L)^{2M}\ve^{4M}\nonumber\\
&+2^{4M-2}\ve^{2M}(L\hat L)^{2M}(E|V^\ve_i(k\ve^2,l\ve^2)|^{2M}+|V^\ve_j(k\ve^2,l\ve^2)|^{2M}).\nonumber
\end{eqnarray}

Next, by (\ref{5.16}), (\ref{5.17}), (\ref{5.21}) and (\ref{5.25}),
\begin{eqnarray}\label{5.43}
&|\bbV_{ij}^\ve(0,(l-k)\ve^2)-\ve^2(l-k)EF_{ij}-\bbU_{ij}^\ve(0,(l-k)\ve^2)|\\
&\leq (L\hat L)^2\ve^2(l-k)+L^2\hat L\ve^2(2+\hat L)\nonumber\\
&+4(L\hat L)^2\ve^2|n((l-k)\bar\tau)-(l-k)|+\ve^2|\sum_{m=0}^{l-k-1}(F_{ij}\circ\vt^m-EF_{ij})|\nonumber
\end{eqnarray}
where we took into account that $|F_{ij}|\leq (L\hat L)^2$. Also, we obtain easily by (\ref{2.6}) that
\begin{eqnarray}\label{5.44}
&|\bbU_{ij}^\ve(0,(l-k)\ve^2)-\hat\bbU_{ij}^\ve(0,(l-k)\ve^2)|\\
&\leq\ve^2 L^2|n((l-k)\bar\tau)-(l-k)|^2+\ve^2L|n((l-k)\bar\tau)-(l-k)||\hat U^\ve(0,(l-k)\ve^2)|.\nonumber
\end{eqnarray}

By Lemma \ref{lem3.4},
\[
E|\hat\bbU_{ij}^\ve(0,(l-k)\ve^2)|^{2M}\leq C_{33}\ve^{4M}(l-k)^{2M}
\]
where $C_{33}>0$ does not depend on $l,k$ or $\ve$. Combining this with
(\ref{5.12}), (\ref{5.23}), (\ref{5.34}), (\ref{5.36}) and (\ref{5.42})--(\ref{5.44}) we obtain that
\begin{equation}\label{5.45}
E|\bbV_{ij}^\ve(k\ve^2,l\ve^2)|^{2M}\leq C_{34}\ve^{4M}(l-k)^{2M}
\end{equation}
for some $C_{34}>0$ which does not depend on $\ve,k$ or $l$. Hence, by the Chebyshev inequality
\begin{eqnarray}\label{5.46}
&P\big\{ \max_{0\leq k<l\leq k+\ve^{-(1+\al)},\, l\leq T\ve^{-2}}\frac {|\bbV_{ij}^\ve(k\ve^2,\,l\ve^2)|}
{(l-k)^{1-\be}\ve^{2(1-\be)}}>1\big\}\\
&\leq\sum_{0\leq k<l\leq k+\ve^{-(1+\al)},\, l\leq T\ve^{-2}}P\big\{\frac {|\bbV_{ij}^\ve(k\ve^2,\,l\ve^2)|}
{(l-k)^{1-\be}\ve^{2(1-\be)}}>1\big\}\nonumber\\
&\leq C_{34}\ve^{4M}\sum_{0\leq k<l\leq k+\ve^{-(1+\al)},\, l\leq T\ve^{-2}}(l-k)^{2\be M}\leq C_{34}
\ve^{2M\be(1-\al)-4}\nonumber
\end{eqnarray}
where $\al\in(0,1)$ is close to 1 and it is chosen similarly to Section \ref{subsec4.2}. Taking $\ve=\ve_N=\frac 1{\sqrt N}$
 and $M\geq 3\be^{-1}(1-\al)^{-1}$, using (\ref{5.38}) when (\ref{5.39}) is
satisfied, relying on the Borel--Cantelli lemma and arguing as for (\ref{5.37}) we obtain that
\begin{equation}\label{5.47}
|\bbV_{ij}^\ve(s,t)|\leq C_{\al,\be}|t-s|^{1-\be}
\end{equation}
whenever $|t-s|\leq\ve^{1-\al}$ holds true, where $C_{\al,\be}>0$ is another a.s. finite random variable which does not
depend on $s,t$ and $\ve$.

Let $0=t_0<t_1<...<t_m=T$ and write
\[
\sum_{0\leq q<m}|\bbV_{ij}^\ve(t_q,t_{q+1})-\hat\bbW_{ij}^\ve(t_q,t_{q+1})|^{p/2}\leq \cJ^\ve_1+2^{\frac p2-1}
(\cJ_2^\ve+\cJ_3^\ve)
\]
where for $\al\in(0,1)$,
\[
\cJ_1^\ve=\sum_{0\leq q<m,\, t_{i+1}-t_i>N_\ve^{-(1-\al)}}|\bbV_{ij}^\ve(t_q,t_{q+1})-\hat\bbW_{ij}^\ve(t_q,t_{q+1})|^{p/2},
\]
\[
\cJ_2^\ve=\sum_{0\leq q<m,\, t_{q+1}-t_q\leq N_\ve^{-(1-\al)}}|\bbV_{ij}^\ve(t_q,t_{q+1})|^{p/2}\quad\mbox{and}
\]
\[
J_3^\ve=\sum_{0\leq q<m,\, t_{q+1}-t_q\leq N_\ve^{-(1-\al)}}|\hat\bbW_{ij}^\ve(t_q,t_{q+1})|^{p/2}.
\]
Observe that
\[
\bbV^\ve_{ij}(s,t)=\bbV^\ve_{ij}(t)-\bbV^\ve_{ij}(s)-V^\ve_i(s)(V^\ve_j(t)-V^\ve_j(s))
\]
and
\[
\hat\bbW^\ve_{ij}(s,t)=\hat\bbW_j(t)-\hat\bbW_{ij}^\ve(s)-\hat W^\ve_i(s)(\hat W_j^\ve(t)-\hat W_j^\ve(s)).
\]
We estimate $\cJ^\ve_1$ taking into account Lemma \ref{lem3.10}, the supremum norm estimates in (\ref{2.25}) and
(\ref{2.26}), the fact that there exist no more than $[TN_\ve^{1-\al}]$ intervals $[t_q,t_{q+1}]$ with length
exceeding $N_\ve^{-(1-\al)}$ and then proceed similarly to (\ref{4.40})--(\ref{4.42}) obtaining that
\[
\cJ_1^\ve=O(\ve^{p\del+2(\al-1)})
\]
where $1-\al>0$ can be taken arbitrarily small and $\del>0$ comes from Lemma \ref{lem3.10} and the supremum
norm estimates of Theorem
\ref{thm2.5}. Next, $\cJ_2^\ve$ is estimated using (\ref{5.47}) taking into account that $\sum_{0\leq q<m}|t_{q+1}-t_q|=T$
and arguing similarly to (\ref{3.31}). Finally, $\cJ^\ve_3$ is estimated similarly to (\ref{3.33})--(\ref{3.35}) and
(\ref{4.45})--(\ref{4.47}). This completes the proof of (\ref{2.26}).
 Now, Theorem \ref{thm2.4} follows from Theorem \ref{thm2.5} and the rough paths theory arguments given in Section \ref{ss:ProofThm24} below.

\section{Rough paths and diffusion approximation}\label{roughsec6}\setcounter{equation}{0}


%
%
%


\def \x{U}
\def \bx{\mathbf{U}}
\def \mbx{\mathbb{U}}
\def \bB{\mathbf{B}}
\def \mbB{\mathbb{B}}
\def \tx{\tilde U}
\def \driftA{A}
\def \driftB{\tilde A}
\def \tbx{\tilde{\mathbf{U}}}
\def \tmbx{\tilde{\mathbb{U}}}
\def \mvar{}


We start this section with a review of some elements of rough path theory, pointing whenever possible to \cite{FH}. Most results therein are formulated in H\"older spaces, the extension
to c\`adl\`ag $p$-variation spaces is found in \cite{FS, FZ}.

\subsection{Review and local Lipschitz continuity of It\^o-Lyons map}
\subsubsection{Rough paths} \label{sec:RP}
\def \dim{d}

Consider a c\`adl\`ag path $U: [0,T] \to \R^e$ of finite $p$-variation on $[0,T]$, so that
\begin{equation} \label{equ:pvarnorm}
                   \| \x \|_{p\mvar,[0,T]} := \left( \sup_{\op} \sum_{[s,t]\in \op} |  \x (s,t)|^{p}   \right)^{\frac 1 p} < \infty \
\end{equation}
with path increments $ \x (s,t)  := \x (t) - \x (s) \in \R^e$, additive in the sense that $ \x (s,t) +  \x (t,u) =  \x (s,u)$.
 For $p \in [1,\infty)$ this defines a seminorm (it does not separate constants).
For $p = 1$, iterated Riemann--Stieltjes (RS) integration defines second order increments $ \mbx(s,t) \in \R^e \otimes \R^e$, c\`adl\`ag in both variables,
\begin{equation} \label{equ:secondLevel}
          \mbx^{ij}(s,t) := \int_{(s,t]} ( \x^i (r-) - \x^i (s) )  d \x^j (r), \qquad 1 \le i,j \le e.
\end{equation}
Such increments are non-additive; elementary (additivity) properties of the integral gives {\em Chen's relation} (cf. \cite[Ch. 2]{FH})
\begin{equation} \label{equ:Chen}
     \mbx(s,t)  +   \x(s,t) \otimes  \x(t,u) + \mbx(t,u) =   \mbx(s,u), \quad 0 \le s \le t \le u \le T,
\end{equation}
with tensor notation that relieves us from spelling out coordinates.
Thanks to classical works of Young (cf. \cite[Ch. 4]{FH}) this extends to $p \in [1,2)$, such that\footnote{The spaces  $\R^e, \R^e \otimes \R^e$ are equipped with compatible norms, all denoted by $ | \cdot |$, compatible in the sense that $| v \otimes w| \le |v| |w|$ for all $v,w \in \R^e$.}
\begin{equation} \label{equ:pvarnorm2}
          \| \mbx \|_{(p/2)\mvar,[0,T]} = \left( \sup_{\op} \sum_{[s,t]\in \op} | \mbx (s,t) |^{\frac p 2}   \right)^{\frac 2 p}< \infty .
\end{equation}
Let now $p \in [2,3)$. There is no more (Riemann--Stieltjes or Young) meaning to \eqref{equ:secondLevel}, 
instead we consider $\mbx$ as part of what we mean by a path:
by definition, a (level-$2$, c\`adl\`ag) {\it $p$-rough path} (over $\R^e$, on $[0,T]$) is a pair $\bx = (\x, \mbx)$, c\`adl\`ag, where one imposes the algebraic Chen relation \eqref{equ:Chen} and the analytic regularity conditions  \eqref{equ:pvarnorm},  \eqref{equ:pvarnorm2}, so that
\begin{equation} \label{equ:doublenorm}
                   \| \bx \|_{p\mvar,[0,T]} := \| \x \|_{p\mvar,[0,T]} +   \| \mbx \|_{(p/2)\mvar,[0,T]} < \infty \ .
\end{equation}
If $U(0)$ is fixed, or upon identifying paths with identical increments, one can equivalently regard $\bx$ as $2$-parameter (c\`adl\`ag) function $(s,t) \mapsto \bx (s,t) = ( \x (s,t), \mbx(s,t)) \in \R^e \oplus (\R^e \otimes \R^e) =: G$, a (Lie)group equipped with multiplication $(a,M) \star (b,N) = (a + b, M + a \otimes b + N)$, inverse $(a,M)^{-1}:= (-a, -M + a\otimes a),$ and identity $(0,0)$. Addivity of $ \x$ and Chen's relation then take the appealing form
\begin{equation} \label{equ:Chenstar}
\bx (s,t) \star \bx (t,u) = \bx (s,u).
\end{equation}
From $\bx (s,t) = \bx (0,s)^{-1} \star \bx (0,t)$ we see that $t \mapsto \bx (0,t)$ contains all information which suggests an essentially equivalent definition of rough path as genuine $G$-valued c\`adl\`ag path $t \mapsto \bx (t)$, with induced group increments  $\bx (s,t) = ( \x (s,t), \mbx(s,t)) = \bx (s) ^{-1}\star \bx (t)$, subject only to the regularity condition \eqref{equ:doublenorm}.

\medskip

In many cases $\mbx$ arises from some sort of (possibly stochastic) integration of some path (or process) $U$ against itself, and hence scales like $\lambda^2$ upon replacing $U$ by $\lambda U$. This suggests a purely analytic {\em dilation} of rough paths, with pointwise definition
\begin{equation} \label{equ:dilation}
\delta_\lambda \bx(s,t) :=  (\lambda \x(s,t), \lambda^2 \mbx(s,t)).
\end{equation}  The {\em homogenous rough path norm}
\begin{equation} \label{equ:tripleenorm}
                   ||| \bx |||_{p\mvar,[0,T]} := \| \x\|_{p\mvar,[0,T]} +   \| \mbx \|^{1/2}_{(p/2)\mvar,[0,T]} < \infty \ .
\end{equation}
then has the desirable property $||| \delta_\lambda \bx |||_{p\mvar,[0,T]}  =  \lambda ||| \bx |||_{p\mvar,[0,T]}, \lambda \ge 0,$ and is often preferable to its non-homogenous counterpart \eqref{equ:doublenorm}. The latter however gives rise to the {\it (inhomogenous) $p$-rough path distance}\footnote{The notation $\| \bx ; \tbx \|_{p\mvar,[0,T]}$ is a gentle reminder of the non-linear nature of rough path spaces, as is evident from Chen's relation.}
\begin{equation} \label{equ:doublenormdist}
                \| \bx ; \tbx \|_{p\mvar,[0,T]} :=  \| \bx - \tbx \|_{p\mvar,[0,T]}  :=   \| \x- \tx \|_{p\mvar,[0,T]} +   \| \mbx -\tmbx \|_{(p/2)\mvar,[0,T]},  \
\end{equation}
with respect to which the It\^o-Lyons map turns out locally Lipschitz continuous. (At occasions, its homogenous counterpart can also be useful.)



\def \w{\mathcal{W}}
\def \bw{\mathbf{W}}
\def \mbw{\mathbb{W}}

\def \Ito{\mathrm{It\hat{o}}}
\def \Strato{\mathrm{Strato}}

\subsubsection{Semimartingales as rough paths} \label{sec:SemiRP} The main motivation for this construction comes from stochastic analysis. Indeed, if $\x = \x (t,\omega)$ is c\`adl\`ag semimartingale, on $\R^e$, then a.s. its It\^o lift
$\bx^\Ito (t;\omega) = (\x (t;\omega) , \mbx^\Ito (0,t;\omega))$, with increments $\x (s,t;\omega) = \x (t;\omega) - \x (s;\omega) \in \R^e$ and
\begin{equation} \label{equ:Ito2ndlevel}
          \mbx^{ij;\Ito}(s,t;\omega) = \left( \int_{(s,t]} ( \x^i (r-) - \x^i (s) )  d \x^j (r) \right) (\omega), \qquad 1 \le i,j \le e,
\end{equation}
 has the correct $p$ and $p/2$ variation regularity, any $p>2$, and hence constitutes a $p$-rough path, for any $p \in (2,3)$, over $\R^e$ and on compact time horizon $[0,T]$. The afore-mentioned $p$-variation regularity of a semimartingale is classical (see \cite[Thm. 1]{Lep}; the argument relies on representing a c\`ad\`ag martingale as time-changed Brownian motion).   
In case $e =1$, this already gives (via It\^o's formula) the $p/2$ variation regularity of $\mbx^\Ito$; for the general case of multidimensional c\`adl\`ag semimartingales see \cite{CF, FZK}, for the continuous case see \cite[Ch.14]{FV} and references therein.

\subsection{Rough differential equations}\label{subsec6.2}

\def \drift {b}
\def \vol {\sigma}

\def \ext{\mathrm{ext}}
\def \y{Y}
\def \ym {Y^-}
\def \yp{Y'}

\def \ty{\tilde Y}
\def \tym {\tilde Y^-}
\def \typ{\tilde Y'}

\def \dimm{e}


Let $\drift$ and $\vol_1, ..., \vol_e$ be vector fields on $\R^d$,
sufficiently smooth for all derivatives below to exist. As is common in this context, we regard $(\vol_1, ... ,\vol_e)$ as $(d \times e)$-matrix valued function $\vol: \R^d \to L(\R^e,\R^d)$.
By one of several equivalent definitions, e.g. \cite[Ch.8.7]{FH}, it is said that $\y$ solves the (c\`adl\`ag) {\it rough differential equation (RDE)} 
$$
                d\y = \drift (\ym)dt + \vol(\ym) d \bx
$$
iff, for all $0\le s < t \le T$, and $i=1,...,d$ one has\footnote{In coordinates,
$(D\vol (\y_s) \vol (\y_s) \mbx_{s,t})_i =\sum_{1\leq l,j\leq e}\sum_{1\leq k\leq d} \partial_k \vol_{ij} (\y_s) \vol_{kl}
(\y_s) \mbx_{s,t}^{lj}$.}
\begin{equation}                 \label{def:davieRDE}
               \y_t - \y_s = \drift(\y_s) (t-s) + \vol (\y_s) \x_{s,t} + D\vol (\y_s) \vol (\y_s) \mbx_{s,t} + R_{s,t} \ ,
\end{equation}
with small remainder $R$, in the sense that
$$ \sup_{\op (\eps)} \sum_{[s,t]\in \op (\eps)} | R_{s,t}|   \to 0  \ \ \text{ as $\eps \to 0$,} $$
with supremum taken over all partitions $\op (\eps)$ of $[0,T]$, with mesh-size less than $\eps$.
This definition first encodes that $(\y,\yp) = (\y,\vol(\y)) \in \mathscr{D}^{p/2}_{\x}$, is a controlled (c\`adl\`ag) rough path, cf. \cite[Ch.4]{FH}),\cite{FZ},
and ``$p/2$''-remainder
$\y_{s,t}^\# = D\vol (\y_s) \vol (\y_s) \mbx_{s,t} + R_{s,t}$ for which $\| \y^\# \|_{(p/2)\mvar} < \infty$.
As a consequence (of c\`adl\`ag rough integration theory
\cite{FS}), we can sum over $[s,t] \in \op$, which is a partition of $[0,u]$, and see that $\y$ satisfies a bona fide rough integral equation (RDE), for all $u \in (0,T]$,
\begin{equation} \label{equ:RDE1}
            \y_u = \y_0 + \int_{(0,u]} \drift (\ym_s) ds + \int_{(0,u]} \vol (\ym_s) d \bx .
\end{equation}
Conversely, (\ref{def:davieRDE}) is satisfied by every solution of this integral equation. We are interested in discrete-time approximations. Concerning the drift term this amounts to replace $d s$ by $d A^N$ with the step function $A_s^N := [s N] / N$. We note that $\Delta^N_s := s - A^N(s) \to 0$, uniformly, with the rate $1$, i.e. $\| \Delta^N \|_\infty = O(N^{-1})$. We also have uniform $1$-variation bounds on compacts. By an easy interpolation argument (see e.g \cite[Sec. 8.5]{FV}), $\| \Delta^N \|_{q,[0,T]}  \le \| \Delta^N \|_\infty^{1-1/q}  \| \Delta^N \|_{1,[0,T]}^{1/q}$, we see that we have $q$-variation convergence with the rate $(1-1/q)>0$, whenever $q>1$.
%
 This motivates to consider extensions to more general drift terms of the form
\begin{equation} \label{equ:RDE2}
                d\y = \drift (\ym)d \driftA + \vol(\ym) d \bx.
\end{equation}
with c\`adl\`ag $t \mapsto A(t)$ of finite $q$-variation. For $q \le p/2$ and bounded $\drift$, which suffices for our purposes, the contribution of this drift term can be absorbed in $\y^\#$, hence can be treated as a perturbation of the drift-free case. 
The following theorem gives (well-known) conditions for well-posedness and a quantitative, local Lipschitz estimate for the solution (a.k.a. It\^o-Lyons) map, comparing 
$\y$ to the solution of another RDE,
\begin{equation} \label{equ:RDE3}
           d\ty = \drift (\tym)d \driftB + \vol(\tym) d \tbx.
\end{equation}


\begin{thm}
\label{thm:RDEcont} Let $p \in [2,3), q \in [1,p/2]$ and $b, \sigma \in C^3_b$.
Then there exist unique c\`adl\`ag solutions to \eqref{equ:RDE2} and \eqref{equ:RDE3} with given initial data $\y_0$ and $\ty_0$. Moreover, the solution map is locally Lipschitz in the precise sense
 $$ \| \y - \ty\|_{p\mvar,[0,T]} \le C e^{C \ell^3} \big\{ \| \driftA - \driftB \|_{q\mvar,[0,T]} +   \| \bx - \tbx \|_{p\mvar,[0,T]}
+ |\y_0 - \ty_0| \big\}
$$
for some $C=C(p;b,\sigma)$, whenever
$$
\max \{ || \driftA ||_{q\mvar,[0,T]}, || \driftB ||_{q\mvar,[0,T]},
||| \bx |||_{p\mvar,[0,T]},
||| \tbx |||_{p\mvar,[0,T]}
\}
\le \ell
$$
\end{thm}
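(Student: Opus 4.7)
The plan is to reduce everything to the standard stability theory of controlled rough paths and then bootstrap a local estimate to a global one via a greedy partition. First I would recognise, via \eqref{def:davieRDE}, that $(\y, \yp) := (\y, \vol(\y))$ is a controlled rough path with respect to $\bx$, with remainder $\y^\#$ of finite $(p/2)$-variation defined through $\y^\#_{s,t} = \y_{s,t} - \vol(\y_s)\x_{s,t} - D\vol(\y_s)\vol(\y_s)\mbx_{s,t}$. The drift term $\int \drift(\ym)\, d\driftA$ has finite $q$-variation with $q \le p/2$, so it can be absorbed additively into $\y^\#$ and handled as a lower-order perturbation. Well-posedness, along with a local Lipschitz estimate, then follows from a fixed-point argument in the Banach space of controlled rough paths, exactly as in \cite[Ch.~8]{FH} for the H\"older continuous setting and as in \cite{FS, FZ} in the c\`adl\`ag $p$-variation setting.

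The local step is to execute this contraction on a sub-interval $[s,t]$ on which the driver size $|||\bx|||_{p,[s,t]} + |||\tbx|||_{p,[s,t]} + \|\driftA\|_{q,[s,t]} + \|\driftB\|_{q,[s,t]}$ is at most some small threshold $\eta = \eta(\drift,\vol,p,q)$. A careful but by-now standard comparison of the two Davie expansions then yields the one-step estimate
\begin{equation*}
\| \y - \ty \|_{p,[s,t]} \le K \bigl( |\y_s - \ty_s| + \|\driftA - \driftB\|_{q,[s,t]} + \|\bx - \tbx\|_{p,[s,t]} \bigr),
\end{equation*}
for a constant $K$ depending only on $\drift, \vol, p, q$.

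The second step globalises this via a greedy partition $0 = \tau_0 < \tau_1 < \cdots < \tau_{N_T} = T$ whose sub-intervals each carry driver size exactly $\eta$. Super-additivity of $p$- and $q$-variation, together with the hypothesis that all four driver norms are bounded by $\ell$, yields $N_T \le C(1 + \ell^p/\eta^p + \ell^q/\eta^q)$. Since $p < 3$ and $q \le p/2 < 3/2$, both $\ell^p$ and $\ell^q$ are dominated by $1 + \ell^3$, so with $\eta$ fixed we have $N_T \le C'(1 + \ell^3)$. Concatenating the one-step Lipschitz estimate across the partition, with an amplification factor of at most $1 + K\eta$ per sub-interval on the initial-data error and the driver errors adding telescopically, produces the claimed global Lipschitz constant $(1 + K\eta)^{N_T} \le e^{K\eta N_T} \le e^{C\ell^3}$.

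The main obstacle is the careful comparison of $\y^\#$ and $\ty^\#$ in the local step: because these remainders are defined in terms of the two different rough paths $\bx$ and $\tbx$, one must insert and cancel mixed terms of the form $D\vol(\y)\vol(\y)\tmbx - D\vol(\ty)\vol(\ty)\tmbx$ and invoke composition estimates such as $\|f(\y) - f(\ty)\|_{p} \le C(1+\|\y\|_{p}+\|\ty\|_{p})(|\y_0-\ty_0|+\|\y-\ty\|_{p})$ for $f \in C^2_b$. The c\`adl\`ag aspect is absorbed by always evaluating at the left-limit $\ym$ and tracking the jumps of $\bx$ and $\driftA$ throughout, which the framework of \cite{FS, FZ} handles cleanly. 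Once this bookkeeping is in place, the greedy-partition argument above is essentially routine.
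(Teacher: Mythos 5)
Your argument is correct, but it takes a genuinely different route from the paper. The paper treats \eqref{equ:RDE2} and \eqref{equ:RDE3} as \emph{drift-free} RDEs over an extended state space: it bundles the vector fields as $(b,\sigma_1,\ldots,\sigma_d)\leftrightarrow\sigma^{\mathrm{ext}}\in L(\bbR^{1+d},\bbR^e)$, augments the driving rough path to $\bx^{\mathrm{ext}}$ over $\bbR^{1+d}$ by adjoining $A$ to $U$ and filling in the missing cross-integrals between $A$ and $U$ canonically in the Young sense (valid since $1/q+1/p>1$ when $q\le p/2<3/2$), bounds $|||\bx^{\mathrm{ext}}|||_p$ and $\|\bx^{\mathrm{ext}}-\tbx^{\mathrm{ext}}\|_p$ in terms of the original data via Young estimates, and then simply invokes \cite[Thm 3.9]{FZ} for drift-free c\`adl\`ag RDEs. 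You instead keep the drift as a lower-order perturbation inside $\y^\#$ (the paper mentions this as a remark preceding the theorem, but does not pursue it in the proof), establish a single-step Lipschitz estimate by a fixed-point argument on intervals where the local driver size is below a fixed threshold $\eta$, and globalise by a greedy partition with amplification $(1+K\eta)^{N_T}$; your count $N_T\lesssim(\ell/\eta)^p+(\ell/\eta)^q\lesssim 1+\ell^3$ is where $p<3$ and $q\le p/2$ enter, reproducing the stated $e^{C\ell^3}$. The paper's reduction buys brevity and modularity, deferring the hard analysis entirely to \cite{FZ}; your route is more self-contained and transparent about where the exponent $3$ comes from, but in effect re-derives the content of the cited theorem. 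One small bookkeeping point you elided: when you patch the subinterval $p$-variation estimates back to $\|\y-\ty\|_{p,[0,T]}$, a polynomial-in-$N_T$ factor appears from concatenating $p$-variation over the greedy partition, and the driver-difference terms acquire a factor $N_T^{1-1/p}$ via H\"older after super-additivity; both are harmlessly absorbed into $e^{C\ell^3}$, but they deserve a line.
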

\begin{remark} (i) In our application $p>2$ so we can take $q>1$, as fits our needs. (ii) In a setting of continuous geometric rough paths this estimate is found in
\cite[Thm 10.26]{FV}. The c\`adl\`ag extension is found in \cite[Thm 3.9]{FZ} but only written in the drift free case $\drift \equiv 0$.
The Lipschitz estimate for c\`adl\`ag RDEs with drift appears in \cite{CFKMZ}, but without explicit dependence on $\ell$.
(ii) We have not pushed for optimal assumptions on $b, \sigma$. In the present form, this gives us the convenience, used in the proof below,  to reduce everything to the drift-free case.\end{remark}


\begin{proof} As noted before \eqref{def:davieRDE} we write $(\sigma_1, ... , \sigma_e) \leftrightarrow  \sigma$, i.e. identify the noise vector fields with the map $\R^d \ni y \mapsto ((\xi^i) \mapsto \sum_i \sigma_i (y) \xi^i) \in L(\R^e,\R^d)$. We can treat \eqref{equ:RDE2} and \eqref{equ:RDE3} as (drift-free) RDEs with vector fields
$$
     (b, \sigma_1, ... , \sigma_e)  \leftrightarrow   \sigma^\ext,
$$
with $\sigma^\ext (y) \in L(\R^{1+e},\R^d)$, driven by the $\bx^\ext$, the canonically defined rough path associated to $(\driftA,\x,\mbx)$, with all ``missing'' iterated integrals (between $A$ and components of $\x$) canonically defined in
the Young sense (see Section 4.1 in \cite{FH}). Moreover, standard estimates for Young integrals imply
$$
       ||| \bx^\ext |||_{p\mvar,[0,T]} \le c \Big( || \driftA ||_{q\mvar,[0,T]} + ||| \bx |||_{p\mvar,[0,T]} \Big),
$$
for some constant $c=c(p,q)$, as well as,
\begin{equation} \label{equ:aux1}
      \| \bx^\ext - \tbx^\ext \|_{p\mvar,[0,T]} \le c \ \ell \Big( \| \driftA - \driftB \|_{q\mvar,[0,T]} +   \| \bx - \tbx \|_{p\mvar,[0,T]}  \Big).
\end{equation}
The claimed estimates then follow by applying \cite[Thm 3.9]{FZ}.
%
\end{proof}

We state a corollary for families of RDEs, indexed by $N \in \mathbb{N}$, of the form
$$
    d\y_N = \drift (\ym_N)d \driftA_N + \vol(\ym_N) d \bx_N, \quad    d\ty_N = \drift (\tym_N)d \driftB_N + \vol(\tym_N) d \tbx_N
$$
with initial data $\y_N(0) = \y (0) $ and  $\ty_N(0) = \ty (0)$, respectively.

\begin{corollary} \label{cor:convwithrates} Let $p,q,b,\sigma$ be as in the previous theorem.
  Assume $\y (0) = \ty (0)$ and let there exist constants $C$ and $\delta >
  0$ such that for all $N \in \mathbb{N}$ we have
  \begin{equation}  \label{cond_A1}
    \| \driftA_N - \driftB_N \|_{q\mvar; [0, T]} +
  \| \bx_N - \tbx_N \|_{p\mvar; [0, T]} \le C N^{- \delta},
    \end{equation}
  as well as $$ \| \driftA_N \|_{q\mvar; [0, T]} + \| \driftB_N \|_{q\mvar; [0, T]}  \le C$$ and
    \begin{equation} \label{cond_A2}
   ||| \tbx_N |||_{p \mvar; [0, T]} \le C \log \log (N \vee 3).
   \end{equation}
  Then, for any $\delta' \in (0,
  \delta)$, and some constant $C'$ not dependent on $N$,
  \[ \sup_{0 \le t \le T} |  \y_N (t) - \ty_N (t) | \le C' N^{-
     \delta'}.  \]
     \end{corollary}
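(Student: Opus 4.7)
The plan is to apply Theorem~\ref{thm:RDEcont} with a radius $\ell = \ell_N$ that grows only at an iterated-logarithmic rate, so that the exponential factor $e^{C\ell^3}$ in the Lipschitz estimate is absorbed into an arbitrarily small polynomial correction, leaving the polynomial decay $N^{-\delta}$ from \eqref{cond_A1} intact up to shrinking $\delta$ to any prescribed $\delta' < \delta$. First I would bound the homogeneous rough path norm of $\bx_N$ by that of $\tbx_N$: using \eqref{equ:tripleenorm}, the elementary inequality $\sqrt{a+b} \le \sqrt{a}+\sqrt{b}$ together with \eqref{cond_A1} gives
\begin{equation*}
|||\bx_N|||_{p\mvar,[0,T]} \;\le\; |||\tbx_N|||_{p\mvar,[0,T]} + \|\x_N - \tx_N\|_{p\mvar,[0,T]} + \|\mbx_N - \tmbx_N\|^{1/2}_{(p/2)\mvar,[0,T]} \;\le\; C \log\log(N \vee 3) + O(N^{-\delta/2}).
\end{equation*}
Combined with \eqref{cond_A2} and the uniform $q$-variation bound on $\driftA_N, \driftB_N$, this lets me take $\ell_N := C_1 \log\log(N \vee 3)$ in Theorem~\ref{thm:RDEcont} for all $N$ sufficiently large.

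Next I would invoke Theorem~\ref{thm:RDEcont}. Using $\y_N(0) = \ty_N(0)$ together with \eqref{cond_A1}, it yields
\begin{equation*}
\|\y_N - \ty_N\|_{p\mvar,[0,T]} \;\le\; C \, e^{C \ell_N^3} \bigl(\|\driftA_N - \driftB_N\|_{q\mvar,[0,T]} + \|\bx_N - \tbx_N\|_{p\mvar,[0,T]}\bigr) \;\le\; C' \, e^{C C_1^3 (\log\log(N \vee 3))^3} N^{-\delta}.
\end{equation*}
The decisive arithmetic point is that $(\log\log N)^3 = o(\log N)$, so for any $\eta > 0$ and all $N$ large enough one has $C C_1^3 (\log\log N)^3 \le \eta \log N$, and therefore the exponential factor is bounded by $N^\eta$. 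Taking $\eta := \delta - \delta' > 0$ turns the right-hand side into $C'' N^{-\delta'}$, and the sup norm on the left of the stated conclusion is dominated by the $p$-variation norm since initial values coincide.

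There is no serious obstacle beyond the bookkeeping in the first step; the substance of the corollary is precisely the observation that the iterated-logarithm growth \eqref{cond_A2} -- exactly the almost sure growth rate of Brownian rough paths coming from the law of the iterated logarithm -- is slow enough to be absorbed by the exponential $e^{C\ell^3}$ in the It\^o--Lyons Lipschitz estimate while still yielding a genuine almost sure polynomial rate. This is precisely why Theorem~\ref{thm:RDEcont} was formulated with the explicit $\ell$-dependence rather than as a bare continuity statement.
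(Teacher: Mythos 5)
Your proposal is correct and essentially identical to the paper's argument: both set $\ell_N = O(\log\log(N\vee 3))$ via the triangle inequality (the paper bounds $|||\tbx_N||| + |||\bx_N - \tbx_N|||$ directly, you decompose the homogeneous norm through $\sqrt{a+b}\le\sqrt{a}+\sqrt{b}$), both observe that $(\log\log N)^3 = o(\log N)$ forces $Ce^{C\ell_N^3} = O(N^\eta)$ for every $\eta>0$, and both absorb this polynomial loss into $\delta' < \delta$. Your explicit remark that the supremum of increments is dominated by the $p$-variation norm once $\y_N(0)=\ty_N(0)$ is a correct and welcome clarification of a step the paper leaves silent.
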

  \begin{proof}
    Without loss of generality $T = 1$ and $C \ge 1$. Apply Theorem \ref{thm:RDEcont} with $\bx = \bx_N, \tbx = \tbx_N$ and
 $$
          || \tbx_N |||_{p} + ||| \bx_N - \tbx_N |||_{p}  \le C \log \log (N \vee 3) + C =: \ell_N
$$
    It is easy to see that, for every $\eta > 0$, and as $N \to \infty$,
    $$
      C \exp ( C \ell_N^3)  = O (N^{\eta}).
    $$
    In combination with $ \| \bx_N - \tbx_N \|_{p\mvar; [0, T]} = O (N^{- \delta})$ the results follows.
  \end{proof}

A word on the assumptions of the previous corollary. With $ \driftA_N (s) := [s N] / N$ and $\driftB_N(s) \equiv s$, we already pointed out, as part of the motivation that led us to \eqref{equ:RDE2}, an easy interpolation argument that gives  $ \| \driftA_N - \driftB_N \|_{q\mvar; [0, T]} = O(N^{-(1-1/q)})$. More interestingly, the iterated-logarithmic bound \eqref{cond_A2} precisely holds
for families of rescaled Brownian rough paths, as we will now see.

\subsection{Brownian rough paths with parameters $(\Sigma,\Gamma)$}\label{subsec6.3} 

\subsubsection{Brownian rough paths} \label{sec:BrowRP}
Consider a $e$-dimensional Brownian motion
$\w = \w (\omega)$ with a given covariance
$\Sigma = E [ \w (1) \otimes \w (1) ] \in \R^e \otimes \R^e$.
Specializing \eqref{equ:Ito2ndlevel} to the present situation, we have the {\em It\^o Brownian rough path}
$\bw^\Ito = (\w, \mbw^{\Ito})$.
For any $\Gamma \in \R^e \otimes \R^e$,
we may then consider the second level perturbation  $\bw = (\w, \mbw)$, with
\begin{equation}     \label{equ:BRP}
\mbw (s, t) = \mbw^\Ito (s, t) + \Gamma (t - s)  = \int_s^t (\w (r) - \w (s)) \otimes d \w(r) + \Gamma (t - s).
\end{equation}
This yields a class of {\em Brownian rough paths}, with law determined by the parameters $(\Sigma, \Gamma)$. 
Such a Brownian rough paths is really a (Lie) group-valued Brownian motions in the sense that  $t \mapsto \bw(t) \in G$ has stationary and independent (group) increments, with Brownian scaling valid in the sense that $t \mapsto\delta_{\lambda} \bw (t / \lambda^2), \lambda >0 $, is again a Brownian rough path, equal in law to $\bw$.
(Dilation $\delta$ was introduced in \eqref{equ:dilation}.)
A familiar situation is $\Gamma = \frac 1 2 \Sigma$, the resulting Brownian rough path is then precisely the {\em Stratonovich Brownian rough path}, with
$$
            \mbw^{\mathrm{Strato};ij}(s,t) =  \int_s^t (\w^i (r) - \w^i (s)) \circ d \w^j(r) = \mbw^{\Ito;ij} (s, t) + \frac 1 2 \Sigma^{ij} (t - s).
$$
If we furthermore specialize to $\Sigma = \mathrm{Id}$, so that $\w$ is a {\em standard} Brownian motion, the Brownian rough paths with parameters
$(\mathrm{Id},0)$ (resp. $(\mathrm{Id}, \tfrac{1}{2}\mathrm{Id})$) will be referred to as {\em standard} It\^o- (resp. Stratonovich) Brownian rough paths.
See Chapter 4 in \cite{FH} for a detailed discussion.

\subsubsection{Differential equations driven by Brownian rough paths} \label{sec:DEBRP}
Call $Y=Y(\omega)$ the solution to the rough differential equation driven by a typical realization of the Brownian rough paths, that is
$$
                d\y = \drift (\y)dt + \vol(\y) d \bw = \drift (\y)dt + \vol(\y) d (\w, \mbw).
$$
It is well-known \cite[Theorem 9.1]{FH} that this yields a solution to the It\^o, resp. Stratonovich, stochastic differential equation whenever $\mbw = \mbw^{\Ito}$, resp. $\mbw^{\mathrm{Strato}}$. (This extends further to semimartingales, \cite[Ch.14]{FV}, \cite{CF}.) For a general Brownian rough path, with $\mbw (s, t) = \mbw^\Ito (s, t) + \Gamma (t - s)$ as given in \eqref{equ:BRP}, the very definition of a RDE solution \eqref{def:davieRDE}, with $(\x_{s,t}, \mbx_{s,t})$ replaced by $(\w_{s,t}, \mbw_{s,t})$ immediately shows that
\begin{equation} \label{equ:roughSDE}
                d\y = \drift (\y)dt + \vol(\y) d (\w, \mbw) = \tilde \drift (\y)dt + \vol(\y) d (\w, \mbw^{\Ito})
\end{equation}
with the drift vector field $\tilde \drift = \drift + c$ determined for $i=1,..,d$ by 
\begin{equation} \label{equ:star}
         c_i(y) = (D\vol (y) \vol (y) \Gamma)_i =\sum_{1\leq l,j\leq e}\sum_{1\leq k\leq d}  \partial_k \vol_{ij}
          (\y_s) \vol_{kl}(\y_s) \Gamma^{lj}.
\end{equation}
 The reason is simply that the defining second order term, part of the very definition \eqref{def:davieRDE}, expands as
$$
    \sum_{1\leq l,j\leq e}\sum_{1\leq k\leq d}  \partial_k \vol_{ij} (\y_s) \vol_{kl}(\y_s) \mbw_{s,t}^{lj} =
     \sum_{1\leq l,j\leq e}\sum_{1\leq k\leq d} \partial_k \vol_{ij} (\y_s) \vol_{kl}(\y_s) \mbw_{s,t}^{\Ito,lj} + c_i (t-s).
$$
Appealing again to \cite[Theorem 9.1]{FH}, we see that the (random) RDE solution to \eqref{equ:roughSDE}, with deterministic initial data is a strong solution to the It\^o SDE $dY = \tilde \drift (Y)dt + \vol (Y) d B$.


\subsubsection{Rescaling Brownian rough paths and LIL type estimates}

Consider now a Brownian rough path  $\bw = (\w, \mbw)$ with parameters $(\Sigma, \Gamma)$ and introduce
$$\mathbf{W}_N (t) =
\delta_{N^{- 1 / 2}} \bw (Nt)$$
for $N \in \mathbb{N}$. With $\mathbf{W}_N = (W_N, \mathbb{W}_N)$ this means
$ W_N (t) = N^{- 1 / 2} \w (N t)$ and
$$
     \mathbb{W}_N (s,t) =  N^{-1} \mbw (N s, Nt) = \int_s^t (W_N (r) - W_N (s)) \otimes d W_N (r) + \Gamma (t-s)
     $$
so that each $(W_N, \mathbb{W}_N)$ is a Brownian rough path with the same parameters $(\Gamma, \Sigma)$.
%

\begin{proposition} \label{prop:lil}
  For every $T > 0$, and $p>2$, 
  \[  ||| \bw_N |||_{p \mvar; [0, T]} \le C_T (\omega) \sqrt{ \log \log  (N \vee 3)}.  \]
      for some a.s. finite random variable $C_T$, and all $N \in \mathbb{N}$.
 \end{proposition}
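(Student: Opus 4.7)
The proof plan rests on three ingredients: a scaling reduction, a subexponential tail bound for the rough-path norm on a fixed unit interval, and a dyadic Borel--Cantelli argument.

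\textbf{Step 1 (Scaling reduction).} By definition $\bw_N(s,t) = \delta_{N^{-1/2}}\bw(Ns,Nt)$, so a change of variable in the partitions giving the $p$- and $(p/2)$-variations, together with the homogeneity of $\delta$, yields the exact identity
\[
   |||\bw_N|||_{p\mvar,[0,T]} \;=\; N^{-1/2}\,|||\bw|||_{p\mvar,[0,NT]}.
\]
Hence the proposition is equivalent to the almost-sure growth rate $|||\bw|||_{p\mvar,[0,S]} = O(\sqrt{S}\,\log\log(S\vee e))$ as $S\to\infty$.

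\textbf{Step 2 (Tail estimate on the unit interval).} I would establish a subexponential tail
\[
   P\{|||\bw|||_{p\mvar,[0,1]} > r\} \;\le\; C\exp(-r/C),\qquad r\ge 0.
\]
For the first level $\w$ this is classical: the $p$-variation norm of a Brownian motion on $[0,1]$ has Gaussian tails. For the second level, writing $\mbw = \mbw^{\Ito}+\Gamma(t-s)$, the affine piece contributes at most $|\Gamma|$ in $(p/2)$-variation on $[0,1]$, while the stochastic integrals defining $\mbw^{\Ito}$ are controlled by the Burkholder--Davis--Gundy / L\'epingle inequality applied level by level, exactly as in the c\`adl\`ag rough-path moment estimates recalled in Section~\ref{sec:SemiRP} (see also \cite{FZ}). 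For the continuous Stratonovich variant one has even Gaussian tails via Fernique's theorem, which only improves the argument.

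\textbf{Step 3 (Dyadic Borel--Cantelli).} The Brownian scaling of $\bw$ gives $|||\bw|||_{p\mvar,[0,2^n]}\stackrel{d}{=}2^{n/2}|||\bw|||_{p\mvar,[0,1]}$. Combined with Step~2 and Chebyshev,
\[
   P\bigl\{|||\bw|||_{p\mvar,[0,2^n]} > 2^{n/2}\,c\log n\bigr\} \;\le\; C\exp(-c\log n/C) \;=\; C\,n^{-c/C},
\]
summable in $n$ once $c>C$. Borel--Cantelli then gives $|||\bw|||_{p\mvar,[0,2^n]}\le c\,2^{n/2}\log n$ for all but finitely many $n$, almost surely. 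For any $N\in\bbN$ with $NT\in[2^n,2^{n+1}]$, the monotonicity of $S\mapsto|||\bw|||_{p\mvar,[0,S]}$ and $n+1\asymp\log_2(NT)$ yield $|||\bw|||_{p\mvar,[0,NT]}=O(\sqrt{NT}\,\log\log(NT\vee e))$; inserting this into Step~1 and absorbing the finitely many small $N$ into a random constant $C_T(\omega)$ produces the claimed bound.

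The main obstacle is the subexponential tail of Step~2: polynomial moments alone (via Chebyshev) would only yield the weaker rate $O((\log N)^{\varepsilon})$ for any $\varepsilon>0$, so one must genuinely exploit the exponential concentration available for the rough-path lift of a Brownian motion.
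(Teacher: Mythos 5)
Your proof is correct, and it takes a genuinely different route from the paper's. Let me compare.

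The paper reduces to the standard Stratonovich Brownian rough path (by the Cameron--Martin argument and the deterministic drift-shift $\Gamma - \tfrac12\Sigma$), then invokes Strassen's functional law of the iterated logarithm for the Brownian rough path in $p$-variation topology, established in \cite{LQZ}. From the a.s.\ relative compactness of $\mathbf{Z}_N = \delta_{(2N\log\log N)^{-1/2}}\mathbf{W}(N\,\cdot)$ and a uniform bound on lifted Cameron--Martin paths, the paper gets the sharp rate $|||\bw_N|||_{p\mvar,[0,T]} \le C(\omega)(\log\log N)^{1/2}$, which is stronger than what is stated.

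You instead use (i) the exact scaling identity $|||\bw_N|||_{p\mvar,[0,T]} = N^{-1/2}|||\bw|||_{p\mvar,[0,NT]}$, (ii) a tail bound for $|||\bw|||_{p\mvar,[0,1]}$, and (iii) dyadic Borel--Cantelli. This is more elementary: it bypasses the large-deviations machinery entirely, only requiring a one-sided concentration estimate. As written with a mere subexponential tail $\exp(-r/C)$, you obtain exactly the $\log\log N$ rate stated; had you pushed the (true) Gaussian concentration of the homogeneous Brownian rough path norm (Fernique--Borell, cf.\ \cite[Cor.\ 13.14]{FV}, already cited in the paper's proof of Lemma \ref{lem3.10}), you would recover the paper's $(\log\log N)^{1/2}$. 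One presentational point: in Step 2 you mix BDG/L\'epingle moment bounds (which give only polynomial moments on $\|\mbw^{\Ito}\|_{p/2}$) with Fernique-type Gaussian concentration; the clean route is to note that the It\^o and Stratonovich second levels differ by a deterministic drift, apply Fernique--Borell to the homogeneous norm of the (continuous, geometric) Stratonovich lift directly, and observe that Gaussian tails on $|||\cdot|||$ imply the subexponential tail you want. With that clarification your argument is complete and valid, and it in fact yields the result over the whole class of Brownian rough paths with parameters $(\Sigma,\Gamma)$ without needing the explicit reduction to the standard case.
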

  \begin{proof} By assumption, $\bw_N$ is obtained by scaling from $\bw = (\w,\mbw)$, a Brownian rough path with parameters $(\Sigma, \Gamma)$.
  It suffices to treat the case of standard Stratonovich Brownian rough path, i.e. $(\Sigma, \Gamma) = (\mathrm{Id},  \tfrac{1}{2}\mathrm{Id}))$, as introduced in Section \ref{sec:BrowRP}.
  Indeed, this reduction is easily obtained from writing $\w = \sqrt{\Sigma}B$ in terms of a standard $e$-dimensional Brownian motion $B$ and
  $$\mbw (s, t) = \int_s^t \w(s,r)  \otimes \circ d \w (r)  +
(\Gamma - \tfrac{1}{2} \mathrm{Id} ) (t - s)
$$
 so that, for some constant $c=c(\Sigma, \Gamma, T)$,
 $$ \| \mbw \|_{(p/2)\mvar,[0,T]} \le  c( \| \bbB \|_{(p/2)\mvar,[0,T]}+1)
 $$
 where
  $$\bbB (s, t) = \int_s^t (B(r)-B(s))\otimes \circ d B(r).
  $$

 We thus consider the case of the standard Stratonovich Brownian rough path from here on. This allows to use directly the Strassen law established in $p$-variation rough path topology \cite{LQZ} which states that, with iterated logarithm $\log_2 = \log \circ \log$,
 %
    \[ \mathbf{Z}_N : = \delta_{(2 N \log_2 N)^{- 1 / 2}}
       \mathbf{W} (N \cdot) \]
    is a.s. relatively compact, in the space of geometric $p$-rough paths. The set
    of limit points given the canonical lift of the {\em Cameron--Martin unit ball}, that is 
    $$
        \left\{  (H, \mathbb{H}): H: [0,T] \to \R^e, \text{absolutely continuous: }  H(0)=0, \int_0^T |\dot{H}(t)|^2 dt \le 1, \right\},
    $$
    where it is understood in the above that 
    $ \mathbb{H} (s, t) = \int_s^t  H(s,r)  \otimes \dot{H} (r) dr $. Using in particular Cauchy-Schwarz, 
    $$\| H \|_{p,[0,T]} \le \| H \|_{1,[0,T]} =  \int_0^T |\dot{H}(t)| dt \le  \sqrt{T} \Big(\int_0^T |\dot{H}(t)|^2 dt \Big)^{1/2}; $$ 
    also, the right-hand side of
     $|\mathbb{H} (s, t)|^{1/2} \le \| H \|_{1,[s,t]} \le \sqrt{|t-s|} \sqrt{\int_s^t |\dot{H}(r)|^2 dr}$ telescopes to $\sqrt{T}\sqrt{\int_0^T |\dot{H}(t)|^2 dt}$ upon summation over any partition of $[0,T]$, hence
        $$
              \| \mathbb{H} \|_{(p/2),[0,T]} \le \| \mathbb{H} \|_{(1/2),[0,T]} \le T \int_0^T |\dot{H}(t)|^2 dt.
    $$
    Restricting to $H$ in the Cameron--Martin unit ball,
$$
        ||| \mathbf{H} |||_{p\mvar,[0,T]} = \| H \|_{p,[0,T]} +  \| \mathbb{H} \|_{(p/2),[0,T]} ^{1/2}
    \le  2 \sqrt{T} \| \dot{H} \|_{L ^2} \le 2 \sqrt{T}.
$$
    By Strassen's law for the Brownian rough path \cite{LQZ}, a.s. with $N \to \infty$,
    \[ \inf_{\| \dot{H} \|_{L^2} \le 1} |||  \mathbf{Z}_N
        ; \mathbf{H} |||_{p\mvar,[0,T]} \rightarrow 0, \]
    so we can pick $(H_N)$ in the Cameron-Martin unit ball so that, a.s.
    $||| \mathbf{Z}_N ; \mathbf{H}_N |||_{p\mvar} \rightarrow 0$.
    But then
    \[ ||| \mathbf{Z}_N |||_{p\mvar,[0,T]} \le
       ||| \mathbf{Z}_N ; \mathbf{H}_N |||_{p\mvar,[0,T]}  + |||  \mathbf{H}_N |||_{p\mvar,[0,T]}  = O(1)
       \]
    so that $||| \mathbf{Z}_N |||_{p\mvar}
    \le C (\omega)$, for some a.s. finite random variable $C (\omega) = C_T (\omega)$. Now
    \[ ||| \mathbf{Z}_N |||_{p\mvar,[0,T]} = (2 \log_2 N)^{- 1 / 2} ||| \delta_{N^{- 1 / 2}} \mathbf{W}
       (N.) |||_{p\mvar,[0,T]} = (2 \log_2 N)^{- 1 / 2}
       ||| \mathbf{W}_N |||_{p\mvar,[0,T]} \]
    hence, absorbing $2^{1 / 2}$ in the constant $C (\omega),$ we have
    \[ ||| \mathbf{W}_N |||_{p\mvar,[0,T]} \le C
       (\omega) (\log_2 N)^{1 / 2} . \]
  \end{proof}

\begin{remark} \label{rem:LILeps}
Proposition \ref{prop:lil} holds with integer $N$ replaced by $1/\varepsilon$, as
a.s. estimate, uniform over all $\eps \in (0,1]$. A suitable ``continuous'' formulation of the Strassen
law for the Brownian rough path is found in \cite[Ex. 13.46]{FV}, always in conjunction with Brownian scaling
and the remark that H\"older - refines $p$-variation (rough path) topology.
\end{remark}


\subsection{Diffusion approximations}\label{subsec6.4}
\subsubsection{Discrete dynamics and proof of Theorem \ref{thm2.1}}

We rewrite \eqref{2.4} as
\begin{eqnarray}
&X_N((n+1)/N)=X_N(n/N)+\frac 1Nb(X_N(n/N))\\
&+\sig(X_N(n/N)) (S_N((n+1)/N)-S_N(n/N))\nonumber
\end{eqnarray}
and further as a c\`adl\`ag differential equation.  Specifically, we regard the rescaled partial sum process $S_N$ as piecewise constant (c\`adl\`ag) process and also write $t_N := [t N] / N$  so that
$$
      d X_N = \drift \Big( X_N^- \Big) d t_N
    + \vol ( X_N^-)
    d S_N.
$$
This equation makes sense (equivalently) as Riemann-Stieltjes integral equations and as (c\`adl\`ag) rough integral equation, written as
$$
     d X_N = \drift \Big( X_N^-\Big) d t_N
    + \vol ( X_N^-)
    d \mathbf{S}_N,
$$
where $\mathbf{S}_N = (S_N, \mathbb{S}_N)$ is the (pathwise) canonical lift of $S_N$, a piecewise constant c\`adl\`ag process.
The assumptions of Theorem \ref{thm2.1} guarantee, by Theorem \ref{thm2.2}, that
$$
    \|  \mathbf{S}_N ; \mathbf{W}_N \|_{p\mvar,[0,T]} = O(N^{-\delta}) \quad \text{ a.s. }
$$
where, in the terminology of Section \ref{sec:BrowRP}, we have that $\mathbf{W}_N = (W_N, \mathbb{W}_N)$ is a Brownian rough path, obtained by rescaling a universal Brownian rough path, with parameters $(\Sigma, \Gamma)$ identified in Theorem \ref{thm2.2}, with covariance $\Sigma = \varsigma$ from \eqref{2.8}, and $\Gamma =  \hat\vs$ from \eqref{2.8}, \eqref{2.9}, with components given by,
$$
               \qquad \Gamma_{ij} = \hat\vs_{ij}=\lim_{k\to\infty}\frac 1k\sum_{n=0}^k\sum_{m=-k}^{n-1} E(\xi_i(m)\xi_j(n)) = E \Big(\sum_{l =1}^\infty \xi_i (0) \xi_j (l) \Big),\ i,j = 1, ...,e.
$$
With Section \ref{sec:DEBRP}, we see that Proposition \ref{prop:lil} applies to the family $\mathbf{W}_N$ and we can conclude with Corollary \ref{cor:convwithrates} that
$$
\sup_{0\leq t\leq T}|X_N(t)-\Xi_N(t)|=O(N^{-\del})\quad\mbox{a.s. as}\quad N\geq 1
$$
where $\Xi_N$ is the unique solution of the rough differential equation
\begin{equation}\label{2.11sec6}
d\Xi_N(t)=\sig(\Xi_N(t))d \mathbf{W}_N (t)+ b(\Xi_N(t))dt = \sig(\Xi_N(t))d \mathbf{W}^{\Ito}_N (t)+\tilde b(\Xi_N(t))dt
\end{equation}
with $\tilde b = b + c$
where thanks to \eqref{equ:star}, $c = c(x)$ is given by 
$$
c_i(x)=\sum_{j,l=1}^e\sum_{k=1}^d\frac {\partial\sig_{ij}(x)}{\partial x_k}\hat\vs_{lj}\sig_{kl}(x),\, i=1,...,d.
$$ 

 By basic consistency results of stochastic and rough integration (\cite{CF}, also \cite[Ch. 5]{FH}) the (random) RDE solution
$\Xi_N$ is also the (unique) solution to the classical It\^o stochastic differential equations
$$
d\Xi_N(t) = \sig(\Xi_N(t))d W_N (t)+\tilde b(\Xi_N(t))dt.
$$

\subsubsection{Continuous dynamics and proof of Theorem \ref{thm2.4}} \label{ss:ProofThm24}

 We recall from Theorem \ref{thm2.5} 
 the definition
$$
      V^\ve(t)=\ve\int_0^{t\bar\tau\ve^{-2}}\xi(s)ds
 $$
 where $\bar\tau \in (0,\infty)$. Performing a deterministic time-change $t \to t/{\bar\tau}$ if necessary, we
 can assume $\bar\tau = 1$ and write \eqref{2.20} as
 $$
dX^\ve(t) = b(X^\ve(t))dt +
\sig(X^\ve(t)) dV^\ve,
$$
and further (equivalently) as rough integral equation
$$
   dX^\ve(t) = b(X^\ve(t))dt + \sig(X^\ve(t)) d\mathbf{V}^\ve ,\,\, t\in[0,T],
$$
where $\mathbf{V}^\ve = (V^\ve, \mathbb{V}^\ve)$ is the (pathwise) canonical lift of $V^\ve$, i.e. $\mathbb{V}^\ve (s,t) = \int_s^t (\delta V^\ve) (s,r) \otimes d V^\ve (r)$.
Theorem \ref{thm2.5} tells us precisely that
$$
    \|  \mathbf{V}^\ve ; \mathbf{W}^\ve \|_{p\mvar,[0,T]} = O(\ve^\del)\quad\mbox{a.s.}
$$
for some $\del>0$ a.s. taken simultaneously over $\ve\in(0,1)$. By construction, preceding equation \eqref{5.8}, the family of Brownian rough paths $\{\mathbf{W}^\ve: \ve\in(0,1) \}$ is obtained by rescaling a universal Brownian rough path. In the terminology of Section \ref{sec:BrowRP}, we have that $\mathbf{W}^\ve = (W^\ve, \mathbb{W}^\ve)$ is a Brownian rough path (by construction,  with parameters $(\Sigma, \Gamma)$ where the covariance $\Sigma=\vs$ is given by (\ref{2.22}) and $\Gamma$
comes from Theorem \ref{thm2.5}, i.e.
\[
               \Gamma_{ij} = \hat\vs_{ij}+E\int_0^{\tau(\om)}\xi_j(s,\om) ds \int_0^s \xi_i(u,\om) du, \ \ i,j=1,...,d.
\]

To describe the limiting dynamics consider, for each $\ve \in (0,1)$, the unique solution to the (random) rough differential equation
\begin{equation}\label{2.11sec6+}
d\Xi^\ve(t)=\sig(\Xi^\ve(t))d \mathbf{W}^\ve (t)+ b(\Xi^\ve(t))\bar\tau dt = \sig(\Xi^\ve(t))d \mathbf{W}^{\Ito,\ve} (t)+\tilde b(\Xi^\ve(t))dt
\end{equation}
with $\tilde b = b\bar\tau + c$ where, thanks to \eqref{equ:star}, 
$$ c_{i}(x)=\sum_{j,l=1}^e \sum_{k=1}^d \frac {\partial\sig_{ij}(x)}{\partial x_k}\big(\Gamma_{lj}\big )\sig_{kl}(x).
$$
The basic continuity result for RDEs, Theorem \ref{thm:RDEcont}, applies a fortiori to continuous $p$-variation rough paths, so that the arguments given in the c\`ad\`ag setting in the previous section, adapt immediately (cf. Remark \ref{rem:LILeps}) to the continuous setting.
 In particular, we see 
\begin{equation}\label{2.24sec6}
\sup_{0\leq t\leq T}|X^\ve(t)-\Xi^\ve(t/\bar\tau)|=O(\ve^\delta)\,\,\,\mbox{a.s.}
\end{equation}
We then remark that, by  basic consistency results of stochastic and rough integration (\cite[Ch. 5]{FH}), each process $\Xi^\ve$ is also the (unique) solution to the classical It\^o stochastic differential equation
$$
      d\Xi^\ve(t)= \sig(\Xi^\ve(t))d W^\ve (t)+\tilde b(\Xi^\ve(t))dt
$$
and we obtain (\ref{2.24}). \qed

\subsection{Euler--Maruyama approximation of the It\^o Brownian rough paths.}\label{subsec6.5} 

We recall the setup.   Let $\{ W_N : N \in \mathbb{N} \}$ be a family of Brownian motions defined
  on the same probability space and $\mathbf{W}_N = (W_N, \mathbb{W}_N)$ be
  the corresponding It\^o Brownian rough paths. Set
  \[ \hat{W}_N (t) = W_N ([t N] / N), \quad \hat{\mathbb{W}}_N (s, t) =
     \int_{(s, t]} (\hat{W}_N (r -) - \hat{W}_N (s)) \otimes d \hat{W}_N
     (r) \]
  so that $(\hat{W}_N, \hat{\mathbb{W}}_N)$ is the canonical (c\`adl\`ag) rough
  path lift of piecewise constant approximations to $W_N$. We now prove Lemma \ref{lem3.10},
  restated here for the reader's convenience.
\begin{lemma}
  For any $T>0$ and $p>2$,
there exists $\delta > 0$ such that, almost surely,
  \[ \| W_N - \hat{W}_N \|_{p, [0, T]} = O (N^{- \delta}), \qquad \|
     \mathbb{W}_N - \hat{\mathbb{W}}_N \|_{\frac{p}{2}, [0, T]} = O (N^{-
     \delta}) . \]
\end{lemma}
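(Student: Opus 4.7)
The plan is to mirror the partition-splitting arguments of Sections~\ref{subsec3.4} and~\ref{subsec4.2}, now with the scaled Brownian motion $W_N(t)=N^{-1/2}\cW(Nt)$ in place of the partial-sum process $S_N$. Brownian scaling together with standard Gaussian moment estimates, Chebyshev and Borel--Cantelli, in the manner of (\ref{3.33})--(\ref{3.34}), furnishes, for any $\al\in(0,1)$ and $\be\in(0,1/2)$, an a.s.\ finite random variable $C_{\al,\be}$ with
\[
|W_N(t)-W_N(s)|\le C_{\al,\be}|t-s|^{\frac12-\be}\quad\mbox{whenever}\quad 0\le s<t\le T,\ t-s\le N^{-(1-\al)},
\]
uniformly in $N\ge 1$. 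In particular, specialising to $t-s\le 1/N$ gives $\sup_{0\le t\le T}|W_N(t)-\hat W_N(t)|=O(N^{-(\frac12-\be)})$ a.s.

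For (\ref{3.37}) I will split an arbitrary partition $0=t_0<\cdots<t_m=T$ exactly as in (\ref{3.29}): the at most $[TN^{1-\al}]$ sub-intervals of length exceeding $N^{-(1-\al)}$ each contribute at most $(2C_{\al,\be}N^{-(\frac12-\be)})^p$ to $\|W_N-\hat W_N\|_{p,[0,T]}^p$, while the sub-intervals of length at most $N^{-(1-\al)}$ are estimated by the H\"older bound above, applied to both $W_N$ and $\hat W_N$ (after rounding the endpoints of the latter to the grid). Choosing $\al$ close to $1$ and $\be$ satisfying (\ref{3.32}) makes both exponents strictly negative and gives (\ref{3.37}).

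For (\ref{3.38}) I first need a supremum bound and a H\"older-type bound on the second-level difference. Writing
\[
\bbW^{ij}_N(t)-\hat\bbW^{ij}_N(t)=\sum_{l=0}^{[tN]-1}\int_{l/N}^{(l+1)/N}\bigl(W^i_N(r)-W^i_N(l/N)\bigr)dW^j_N(r)+\cE^{ij}_N(t),
\]
with $\cE^{ij}_N(t)$ a single short integral over $[[tN]/N,t]$, the main sum on the right is a continuous martingale in $t$ whose quadratic variation has expectation $O(N^{-1})$. The Burkholder--Davis--Gundy inequality combined with Chebyshev and Borel--Cantelli then yields $\sup_{0\le t\le T}|\bbW_N(t)-\hat\bbW_N(t)|=O(N^{-(\frac12-\gamma)})$ a.s.\ for any $\gamma>0$. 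By Chen's relation, $\bbW_N(s,t)-\hat\bbW_N(s,t)$ equals the difference of these endpoint values modulo the cross term $[\hat W_N(s)-W_N(s)](W_N(t)-W_N(s))+\hat W_N(s)[(\hat W_N-W_N)(t)-(\hat W_N-W_N)(s)]$, which is controlled by the first-level estimate already proved and by standard bounds on $\sup_t|W_N(t)|$. Independently, BDG gives $E|\bbW^{ij}_N(s,t)|^{2M}\le C_M(t-s)^{2M}$ by Brownian scaling, and running the scheme of (\ref{4.45})--(\ref{4.46}) yields
\[
|\bbW_N(s,t)|+|\hat\bbW_N(s,t)|\le C'_{\al,\be}|t-s|^{1-\be}\quad\mbox{whenever}\quad t-s\le N^{-(1-\al)},
\]
a.s.\ uniformly in $N$.

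Finally I decompose the $p/2$-variation sum for $\bbW_N-\hat\bbW_N$ just as in (\ref{4.39})--(\ref{4.47}): the at most $[TN^{1-\al}]$ ``large'' sub-intervals are estimated by the supremum bound of the preceding paragraph, while the ``small'' sub-intervals are estimated by the H\"older-type bound on both $\bbW_N$ and $\hat\bbW_N$. Choosing $\al$ close to $1$, $\gamma$ small and $\be$ satisfying (\ref{4.44}) makes all resulting exponents strictly negative and delivers (\ref{3.38}). The only genuine subtlety -- and the main technical point to watch -- is the Chen-relation bookkeeping that connects the one-parameter supremum bound on $\bbW_N-\hat\bbW_N$ to the two-parameter increment $\bbW_N(s,t)-\hat\bbW_N(s,t)$ appearing in the $p/2$-variation norm; this is what makes it essential to have (\ref{3.37}) available before attempting (\ref{3.38}).
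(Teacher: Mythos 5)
Your approach is correct in spirit but takes a genuinely different route from the paper's. The paper proves Lemma~\ref{lem3.10} by an $L^q$-interpolation scheme in four steps: Gaussian concentration (Fernique) and second-chaos integrability give $L^q$ bounds on the \emph{supremum} distances, uniform-in-$N$ $p$-variation $L^q$-bounds on $\hat{\mathbb{W}}_N$ are imported from Proposition~6.17 of \cite{FZ}, and then interpolation between $\infty$-norm and $p$-variation followed by Borel--Cantelli converts $L^q$-bounds with a polynomial-in-$N$ rate into the a.s.\ statement. Your route instead mirrors the pathwise partition-splitting of Sections~\ref{subsec3.4} and~\ref{subsec4.2}: a.s.\ supremum bound and a short-scale H\"older-type estimate, then split an arbitrary partition into ``large'' and ``small'' sub-intervals. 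This is more self-contained but needs a few extra, fillable steps you leave implicit: (i) the H\"older bounds (\ref{3.33})--(\ref{3.34}) and the analogous second-level estimates are stated only for grid endpoints, so you must extend them to arbitrary $(s,t)$ (by inserting the nearest grid points and absorbing the sub-grid remainder, which is $O(N^{-1/2+\gamma})$ a.s., using the modulus of continuity of $\mathcal{W}$ over $[0,NT]$); (ii) the ``main sum'' in your second-level decomposition is $M([Nt]/N)$ for a continuous martingale $M$, hence \emph{not} itself a continuous martingale in $t$, but the conclusion survives since $\sup_t|M([Nt]/N)| \le \sup_u|M(u)|$, to which Doob/BDG applies; and (iii) the Chen-relation bookkeeping expresses $\mathbb{W}_N(s,t)-\hat{\mathbb{W}}_N(s,t)$ as the difference of the two endpoint values plus the cross-terms $(W_N(s)-\hat W_N(s))\otimes W_N(s,t)$ and $\hat W_N(s)\otimes\big[(W_N-\hat W_N)(t)-(W_N-\hat W_N)(s)\big]$, controlled by the level-one estimate and $\sup_{[0,T]}|W_N|=O(N^{\gamma})$ a.s. With those additions the argument closes, giving the same rate-type conclusion as the paper's proof.
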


\begin{proof}
  We proceed in four steps. (1) We first get an $L^q$-version, any $q <
  \infty$, of these estimates in case $p = \infty$ where we recall $\| X
  \|_{\infty, [0, T]} = \sup | X (t) - X (s) |$ with sup taken over all $(s,
  t) \in \Delta_T := \{ (s, t) : 0 \le s \le t \le T
  \}$. (2) Uniform (in $N$) $p$-variation estimates, any $p > 2$. (3) An
  interpolation argument gives us $L^q$-estimates
     in $p'$-variation, any $p' >
  2$. (4) At last, the Borel-Cantelli lemma allows us to switch to a.s. convergence. (We
  insist that all $(W_N, \mathbb{W}_N)$ had identical law, so that in the steps
  (1)-(3) we could have written $(W, \mathbb{W})$. In the step (4) however, this
  notation is fully justified.)

  {\bf Step(1)} Write $t^- := [N t] / N, t^+ = t^- + 1 / N$ so
  that $\hat{W}_N (t) = W_N (t^-) .$ Trivially, $\hat{W}_N = W_N$ at times
  $t \in D_N := \{ t_i \equiv i / N : 0 \le i \le N T \}$.
  For arbitrary times $(s, t) \in \Delta_T$ we use the H\"older modulus, with
  exponent $\alpha < 1 / 2$, to see
  \begin{eqnarray*}
   \sup_{(s, t) \in \Delta_T} | (W_N (t) - W_N (s)) - (\hat{W}_N (t) -
     \hat{W}_N (s)) | &\le & 2 \sup_{t \in [0, T]} | W_N (t) - W_N (t^-)
     | \\
     & \le & 2 \| W_N \|_{\alpha ; [0, T]} (1 / N)^{\alpha} . \\
     \end{eqnarray*}
  By a classical result of Fernique (cf. below for a more general result with precise reference) the law of $\| W_N \|_{\alpha ; [0, T]}$
  (independent of $N$) enjoys Gaussian concentration, in the sense
  that $\mathbb{E} (e^{c \| W_N \|_\alpha^2}) < \infty$, for some $c = c (\alpha, T) > 0$.
  By expanding $\exp (.)$
  we see that the $L^q$-norm of any r.v. with Gaussian concentration is
  finite, and in fact, for a constant $C = C (\alpha, T)$,\footnote{Here and below we dependencies of constants w.r.t. $q$ are made explicit when easy to do so, 
 although this is not required for this proof.}
  \[ \| \| W_N - \hat{W}_N \|_{\infty, [0, T]} \|_{L^q (\Omega)} \le C
     \sqrt{q} (1 / N)^{\alpha} . \]
  For second level estimates, we first consider the case of partition points,
  i.e. $(s, t) \in \Delta_{T, N} := \Delta_T \cap D^2_N$. In this case,
  \[ \mathbb{W}_N (s, t) - \hat{\mathbb{W}}_N (s, t) = \sum_{i = m}^{n - 1}
     (\mathbb{W}_N) (t_i, t_{i + 1}) =: S (n) - S (m) \]
  noting that $S (n) \equiv \sum_{i = 0}^{n - 1} (\mathbb{W}_N) (t_i, t_{i +
  1})$ defines a random walk with centred independent increments, hence a
  discrete martingale. We then have
  \[ \sup_{(s, t) \in \Delta_{T, N}} | \mathbb{W}_N (s, t) -
     \hat{\mathbb{W}}_N (s, t) | \le \max_{1 \le n < [N T]} | S
     (n) | =: S^{\star} ([N T]) . \]
  To deal with non-partition points we focus on $s^- \le s < s^+
  \le t = t^-$. (The general case, with $t^- \le t < t^+$, is
  treated in the same fashion.) With Chen's relation, we have
  \[ (i) := \mathbb{W}_N (s, t) -\mathbb{W}_N (s^+, t) =\mathbb{W}_N (s,
     s^+) +  W_N (s, s^+) \otimes  W_N (s^+, t) + \]
  and hence (cf. footnote in Section \ref{sec:RP} on compatibility of norms on $\mathbb{R}^d \otimes
  \mathbb{R}^d$ and $\mathbb{R}^d$),
  \[ | (i) | \le \| \mathbb{W}_N \|_{2 \alpha, [0, T]} (1 / N)^{2
     \alpha} + \| W_N \|^2_{\alpha, [0, T]} (1 / N)^{\alpha} | t - s
     |^{\alpha} \]
  Similarly, using the very defininition of $(\hat{W}_N, \hat{\mathbb{W}}_N)$,
  we have
  \begin{eqnarray*}
    | (i i) | := | \hat{\mathbb{W}}_N (s, t) - \hat{\mathbb{W}}_N (s^+,
    t) | & = & | \hat{W}_N (s, s^+) \otimes \hat{W}_N (s^+,
    t) + \hat{\mathbb{W}}_N (s, s^+) |\\
    & = & |  W_N (s^-, s^+) \otimes \hat{W}_N (s^+, t) |\\
    & \le & \| W_N \|^2_{\alpha, [0, T]} (1 / N)^{\alpha} | t - s
    |^{\alpha}
  \end{eqnarray*}
  The terms $(i), (i i)$ account for the difference between $s$ and $s^+ \in
  D_N$. Similarly, one accounts for the difference between $t^- \in D_N$ and
  $t$ with terms $\widetilde{(i)}, \widetilde{(i i)}$, with identical
  estimate, so that \
  \begin{eqnarray*}
    | \mathbb{W}_N (s, t) - \hat{\mathbb{W}}_N (s, t) | & \le & |
    \mathbb{W}_N (s^+, t) - \hat{\mathbb{W}}_N (s^+, t) | + | (i) | + | (i i)
    | + | \widetilde{(i)} | + | \widetilde{(i i)} | .\\
    & \le & S^{\star} ([N T]) + 2 (\| \mathbb{W}_N \|_{2 \alpha, [0,
    T]} \frac{1}{N^{2 \alpha}} + 2 \| W_N \|^2_{\alpha, [0, T]} \frac{T^{\alpha}}{N^{\alpha}}
    ) .
  \end{eqnarray*}
  Using Doob's inequality, the fact that $S ([N T])$ is an element in
  the second Wiener-Ito chaos (and integrability properties thereof, see e.g.
  Theorem D.8 in \cite{FV}, 
  and finally independence of the
  $\mathbb{W}_N (t_i, t_{i + 1})$, we can bound
   \begin{eqnarray*}
  \| S^{\star} ([N T])\|_{L^q} & \le &
   \frac{q}{q - 1} \| S ([N T]) \|_{L^q} \lesssim q \| S ([N T])
     \|_{L^2} \\
     &=& q \left\| \sum_{i = m}^{[N T] - 1} \mathbb{W}_N (t_i, t_{i +
     1}) \right\|_{L^2} = q \sqrt{\sum_{i = 0}^{[N T] - 1} (t_i, t_{i + 1})^2}
     \le q \sqrt{\frac{T + 1}{N}}
    \end{eqnarray*}
  On the other hand, Fernique estimate for Brownian rough paths, Corollary
  13.14 in \cite{FV}, gives Gaussian concentration of $|||
  \mathbf{W}_N |||_{\alpha, [0, t]} = \| W_N \|_{\alpha, [0, T]}
  + \| \mathbb{W}_N \|^{1 / 2}_{2 \alpha, [0, T]}$ which implies a $O
  (q)$-bound for the $L^q$-norm of both $\| \mathbb{W}_N \|_{2 \alpha, [0,
  T]}$ and$\| W \|^2_{\alpha, [0, T]}$. Putting it all together, we see for
  some constant $C$ which does not depend on $N$ or $q$,
  \[ \| \sup_{(s, t) \in \Delta_T} | \mathbb{W}_N (s, t) - \hat{\mathbb{W}}_N
     (s, t) | \|_{L^q} \le C \frac{q}{N^{\alpha}} . \]
  {\bf Step(2)} From Proposition 6.17 in \cite{FZ} we can see
  that for some constant $C = C (p, T)$ but not dependent on $q$,
  \[ \sup_N \| \| \hat{\mathbb{W}}_N \|_{p / 2, [0, T]} \|_{L^q} \le C q
     < \infty . \]
  (The corresponding first level estimate is trivial in view of the
  $\omega$-wise estimate $\sup_N \|\hat W_N \|_{p, [0, T]} \le \|
  W \|_{p, [0, T]}$ and Gaussian concentration of the right-hand side.)

  {\bf Step(3)} We proceed by interpolation, as e.g. in Lemma 5.2 in \cite{FZ}.  
   Let $2 < p < p'$. In what follows, we spell out the second level
  estimates (the first level estimates are similar but easier),
   \begin{eqnarray*}
   \| \mathbb{W}_N - \hat{\mathbb{W}}_N \|_{p' / 2, [0, T]}
   & \le &
   \| \mathbb{W}_N - \hat{\mathbb{W}}_N \|^{1 - p / p'}_{\infty, [0, T]} \|
     \mathbb{W}_N - \hat{\mathbb{W}}_N \|^{p / p'}_{p / 2, [0, T]} \\
     & \le &
     \| \mathbb{W}_N - \hat{\mathbb{W}}_N \|^{1 - p / p'}_{\infty, [0, T]}
     \times K^{p / p'}_N (\omega)
     \end{eqnarray*}
  with $K^{p / p'}_N (\omega)$ can be taken as $2^{p / p'}$ times $\|
  \mathbb{W}_N \|^{p / p'}_{p / 2, [0, T]} + \| \hat{\mathbb{W}}_N \|^{p /
  p'}_{p / 2, [0, T]}$. Let $1 / q = 1 / q' + 1 / q''$ and apply H\"older's
  inequality to see, also setting $r' = q' (1 - p / p'), r'' = q'' p / p'$
  \begin{eqnarray*}
    \| \| \mathbb{W}_N - \hat{\mathbb{W}}_N \|_{p' / 2, [0, T]} \|_{L^q
    (\Omega)} & \le & \| \| \mathbb{W}_N - \hat{\mathbb{W}}_N \|^{1 - p
    / p'}_{\infty, [0, T]} \|_{L^{q'}} \| K^{p / p'} \|_{L^{q''}}\\
    & = & \| \| \mathbb{W}_N - \hat{\mathbb{W}}_N \|_{\infty, [0, T]}
    \|_{L^{r'}}^{1 - p / p'} \| K_N \|_{L^{r''}}^{p / p'} .
  \end{eqnarray*}
  Thanks to step (2), we have bounds on $\| K_N \|_{L^{r''}}$ which are uniform
  in $N$ and in fact such that $\sup_N \| K_N \|_{L^{r''}}\leq Cr''$ for some constant
  $C>0$ which does not depend on $N$ and $r''$. But then
  \[ \| \| \mathbb{W}_N - \hat{\mathbb{W}}_N \|_{p' / 2, [0, T]} \|_{L^q
     (\Omega)}\leq\tilde C(r' (1 / N)^{\eta})^{1 - p / p'} (r'')^{p / p'}
   \]
  for another constant $\tilde C>0$ which does not depend on $N$, $r'$ and $r''$.
  The precise choices are not that important but we can take $p = (p' + 2) /
  2$, $q' = q'' = q / 2$. In the end,
  \[ \| \| \mathbb{W}_N - \hat{\mathbb{W}}_N \|_{p' / 2, [0, T]} \|_{L^q
     (\Omega)} \le C (p') q (1 / N)^{\eta'} \]
  with $\eta' = \eta (1 - p / p') > 0.$

  {\bf Step(4)} A Borel-Cantelli argument then leads to the a.s.
  estimates. Indeed, for any $\varepsilon \in (0, \eta')$ pick $q > 1 /
  \varepsilon$ so that, from Chebyshev-Markov's inequality,
  \[ P \{ N^{\eta' - \varepsilon} \| \mathbb{W}_N - \hat{\mathbb{W}}_N \|_{p' /
     2, [0, T]} > 1\} \le N^{q (\eta' - \varepsilon)} E (\| \mathbb{W}_N
     - \hat{\mathbb{W}}_N \|^q_{p' / 2, [0, T]}) = O (N^{- q \varepsilon}) .
  \]
  Since $q \varepsilon > 1$, this bound is summable in $N$ and the
  Borel-Cantelli lemma tells us that $\| \mathbb{W}_N - \hat{\mathbb{W}}_N
  \|_{p' / 2, [0, T]} \le 1/N^{\eta' - \varepsilon}$ for all $N
  \geqslant N_0$ for some $N_0 = N_0 (\omega)$. The almost convergence thus
  holds with rate $\delta = \eta' - \varepsilon > 0$. (The first level
  estimates are similar.)
\end{proof}

\section{Rough paths and law of iterated logarithm for iterated sums and integrals}\label{roughsec7}
The present section is devoted to higher order extensions of Theorem \ref{thm2.2}. In conjunction
with a higher-order Strassen law for Brownian rough paths, also shown below, we arrive at
a (functional) law of iterated logarithm for iterated sums. We rely here on some
 higher-order concepts of rough paths. (Detailed references are given but without a
systematic review.)

\subsection{Lyons' extension for c\`adl\`ag rough paths}\label{subsec7.1}

Let $\mathbf{X} = (X, \mathbb{X})$ be c\`adl\`ag $p$-rough path, $p \in [2, 3)$.
One defines inductively iterated rough integrals
\[ \mathbf{\bar{X}}^\ell (s, t) = \int_{(s, t]} \mathbf{\bar{X}}^{\ell - 1}
   (s, r -) \otimes d \mathbf{X}(r-) \quad \in (\mathbb{R}^e)^{\otimes \ell} . \]
The entire stack $\mathrm{Ext}(\mathbf{X}) (s,t) := \mathbf{\bar{X}} (s, t) = (1, \mathbf{\bar{X}}^1 (s, t),
\ldots, \mathbf{\bar{X}}^{\ell} (s, t), \ldots)$, with values in the tensor series over $\R^e$, is known as {\em Lyons'
extension} of $\mathbf{X}$. It is equivalently given as $\mathbf{\bar{X}}
(s, t) = \mathbf{\bar{X}}^{- 1} (s) \otimes \mathbf{\bar{X}} (t)$, in
terms of a linear rough differential equation
\[ d \mathbf{\bar{X}} (t) = \mathbf{\bar{X}} (t -) \otimes d \mathbf{X}
   (t),\,\, \mathbf{\bar{X}} (0) = 1. \]
\begin{example}  \label{exam:sum}
  If $\mathbb{X} (s, t) = \int_s^t (X (r -) - X (s)) \otimes d X (r)$ for some
  c\`adl\`ag bounded variation path $X$, then, for all levels $\ell \geqslant 1$,
  \[ \mathbf{\bar{X}}^{\ell} (s, t) = \int_{\{ s \le r_1 \le
     \cdots \le r_{\ell} \le t \}} d X (r_1 -) \otimes \cdots
     \otimes d X (r_{\ell} -) = : \int_{\Delta_{s, t}^{\ell}} d X \otimes.
     \cdots \otimes d X \]
  (In case of a piecewise constant c\`adl\`ag path $X$, this becomes an iterated
  sum.)
\end{example}

\begin{example} In case of (It\^o, resp. Stratonovich) Brownian
  rough path, we have
  \[ \mathbf{\bar{W}}^{\Ito;\ell} (s, t) = \int_{\Delta_{s, t}^{\ell}} d W
     \otimes \cdots \otimes d W, \quad \mathbf{\bar{W}}^{\Strato;\ell} (s, t) =\quad \int_{\Delta_{s,
     t}^{\ell}} \circ d W \otimes \cdots \otimes \circ d W \]
  in the usual It\^o- resp. Stratonovich sense.
\end{example}

\begin{example} \label{exam:gBRP}
In case of a general Brownian rough path $\mathbf{W} = (W, \mathbb{W})$
  with parameters $(\Sigma, \Gamma),$ we can understand its Lyons extension
  $\mathbf{\bar{W}}  = \mathrm{Ext} (\mathbf{W})$ elegantly as solution
  to the It\^o linear rough differential equation
  \[ d \mathbf{\bar{W}} (t) = \mathbf{\bar{W}} (t) \otimes d W (t) +
     \mathbf{\bar W} (t) \otimes \Gamma d t, \quad \mathbf{\bar{X}} (0) = 1, \]
     followed by setting $ \mathbf{\bar{W}} (s,t)  =  \mathbf{\bar{W}} (s) ^{-1}\otimes  \mathbf{\bar{W}} (t)$.
    Given any word $w =(i_1 \cdots i_\ell) \in \{1,...,e\}^\ell$, of length $|w|=\ell$, and writing $e_w = e_{i_1 \cdots i_\ell}  = e_{i_1}  \otimes \cdots \otimes e_{i_\ell}$, the components
 of $ \mathbf{\bar W}^\ell \in (\R^e)^{\otimes \ell}$ also admit explicit combinatorial expressions, namely
  $$
                    \langle \mathbf{\bar{W}} (s,t) , e_w  \rangle  = \langle \mathbf{\bar{W}}^{\Ito} (s, t), e_w  \rangle    +  \sum_v c_v \langle \mathbf{\bar{W}}^{\Ito} (s, t), e_v \rangle
  $$
  with summation over all words $v$ obtained from $w$ by contracting one or more neighbouring pairs $(i_j,i_{j+1}) \in \{1,...,e\}^2$ to a single letter $0$, with the additional
  convention that $W_0 (t) = t$. (That is, $\mathbf{\bar{W}}^{\Ito}$ here should really be understood as the stack of iterated It\^o integrals of $(1+e)$-dimensional time-space Brownian motion $(W_0,W)$.) The constants $c_v$ are multiplicative functions of $\Gamma$. For instance, if $v$ is obtained by contracting, say, two pairs, $(i_j,i_{j+1}), (i_k,i_{k+1})$, with $1 < j+1 < k < \ell$, then $c_v = \Gamma_{i_j,i_j+1} \Gamma_{i_k,i_k+1}$.
This follows in exactly the same way as \cite[Prop. 22]{BCFP} and can be seen as algebraic renormalization procedure for rough paths.

  \end{example}

\begin{theorem} \label{thm:LyonsLiftLip}
  Let $\mathbf{X} = (X, \mathbb{X}), \tilde{\mathbf{X}} = (\tilde X, \tilde{\mathbb{X}})$ be
  c\`adl\`ag $p$-rough paths, $p \in [2, 3)$,
  with $$||| \mathbf{X} |||_{p, [0, T]} \vee ||| \tilde{\mathbf{X}} |||_{p, [0, T]}
  \le R \in [1,\infty).$$ Then, for every $\ell \in \mathbb{N}$ there exists $c = O(R^\ell)$, as $R \to \infty$,  
  such that
  \[ \| \mathrm{Ext}(\mathbf{X})^{\ell} -  \mathrm{Ext}(\tilde{\mathbf{X}})^{\ell}\|_{p / \ell,    [0, T]}
      \le c (\| X - \tilde X \|_{p, [0, T]} + \| \mathbb{X}-  \tilde{\mathbb{X}} \|_{p / 2, [0, T]}) . \]
\end{theorem}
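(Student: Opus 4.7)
I would prove Theorem~\ref{thm:LyonsLiftLip} by induction on the level $\ell$, using the c\`adl\`ag sewing lemma of \cite{FS, FZ} together with Chen's multiplicative relation for the Lyons extension. The cases $\ell=1,2$ are immediate from the definition of the $p$-rough path distance \eqref{equ:doublenormdist} and the hypothesis $\|\mathbf{X}\|_{p,[0,T]}+\|\tilde{\mathbf{X}}\|_{p,[0,T]}\leq R$. Suppose, as induction hypothesis, that the Lipschitz estimate, together with a uniform bound of the form $\|\mathrm{Ext}(\mathbf{X})^k\|_{p/k,[0,T]} \leq c_k R^k$, has been established for all levels $k < \ell$.

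\textbf{Sewing-based construction at level $\ell$.} Following the Lyons extension procedure, I would introduce the approximating germ
\[
\Xi^\ell_{s,t} \;:=\; \mathrm{Ext}(\mathbf{X})^{\ell-1}(s,t)\otimes X(s,t) \;+\; \mathrm{Ext}(\mathbf{X})^{\ell-2}(s,t)\otimes \mathbb{X}(s,t),
\]
with the convention $\mathrm{Ext}(\mathbf{X})^{0}\equiv 1$. Chen's relation $\mathrm{Ext}(\mathbf{X})(s,u)=\mathrm{Ext}(\mathbf{X})(s,t)\otimes \mathrm{Ext}(\mathbf{X})(t,u)$ lets me rewrite the cocycle defect $\delta\Xi^\ell_{s,t,u}:=\Xi^\ell_{s,u}-\Xi^\ell_{s,t}-\Xi^\ell_{t,u}$ as a sum of outer products of already-constructed levels whose combined Young-variation exponent is $p/(\ell+1)<1$ (since $p<3$, $\ell\geq 2$). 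The c\`adl\`ag sewing lemma of \cite{FZ} then uniquely produces $\mathrm{Ext}(\mathbf{X})^\ell$, preserves Chen's relation at the new level, and furnishes the remainder bound $\|\mathrm{Ext}(\mathbf{X})^\ell-\Xi^\ell\|_{p/\ell,[0,T]} \lesssim \|\delta\Xi^\ell\|_{p/(\ell+1),[0,T]}$. Together with Young's inequality applied to the two summands of $\Xi^\ell$ itself, this yields the uniform bound $\|\mathrm{Ext}(\mathbf{X})^\ell\|_{p/\ell,[0,T]} \leq c_\ell R^\ell$, closing the induction on the uniform bound.

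\textbf{Lipschitz step.} For the difference $\mathrm{Ext}(\mathbf{X})^\ell-\mathrm{Ext}(\tilde{\mathbf{X}})^\ell$, I would invoke the Lipschitz continuity of the sewing map, applied to the germ difference $\Xi^\ell-\tilde\Xi^\ell$. The telescoping identity
\[
A\otimes B-\tilde A\otimes \tilde B \;=\; (A-\tilde A)\otimes B \;+\; \tilde A\otimes (B-\tilde B)
\]
applied to each summand produces two kinds of terms: those involving $\|\mathrm{Ext}(\mathbf{X})^{k}-\mathrm{Ext}(\tilde{\mathbf{X}})^{k}\|_{p/k}$ for $k\in\{\ell-2,\ell-1\}$ (estimated by the induction hypothesis) and those involving $\|X-\tilde X\|_{p}$ or $\|\mathbb{X}-\tilde{\mathbb{X}}\|_{p/2}$ directly, each multiplied by a uniformly bounded higher-level factor. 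Young's inequality for $p/j$ and $p/k$ variations (with $1/j+1/k>1/p\cdot \ell$ verified by $\ell<p+1$) then gives
\[
\|\Xi^\ell-\tilde\Xi^\ell\|_{p/\ell,[0,T]}+\|\delta(\Xi^\ell-\tilde\Xi^\ell)\|_{p/(\ell+1),[0,T]} \;\leq\; c(\ell,p,R)\bigl(\|X-\tilde X\|_{p}+\|\mathbb{X}-\tilde{\mathbb{X}}\|_{p/2}\bigr),
\]
and the sewing lemma delivers the claimed $p/\ell$-variation estimate on $\mathrm{Ext}(\mathbf{X})^\ell-\mathrm{Ext}(\tilde{\mathbf{X}})^\ell$.

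\textbf{Main obstacle.} The principal technical difficulty is bookkeeping: one must repeatedly combine mixed $p/j$- and $p/k$-variation norms via Young's inequality in both the germ $\Xi^\ell$ and its cocycle defect $\delta\Xi^\ell$, and then propagate the polynomial growth of the Lipschitz constant $c(\ell,p,R)$ through the induction without losing sharpness. In the c\`adl\`ag setting, extra care is required because the sewing lemma of \cite{FZ} is phrased in terms of a mixed control that must be verified to respect right-continuity and jumps at each inductive step; the germ $\Xi^\ell$ built from Lyons levels is right-continuous by construction, but this needs a brief verification before applying the sewing estimates.
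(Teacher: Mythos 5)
Your sewing-plus-induction strategy is a genuinely different route from the paper's proof. The paper bypasses any reconstruction of $\bar{\mathbf{X}}^\ell$ via sewing: it treats $(V,V')=(\mathbb{X},X)$ as a controlled rough path in $\mathscr{D}^{p/2}_X$, feeds it through c\`adl\`ag rough integration (which lands in a second-order controlled space $\mathscr{D}^{p/3}_{\mathbf{X}}$), and reads off the level-$3$ Lipschitz bound from the generic local Lipschitz estimate for rough integration together with the observation that $\|\mathcal{V};\tilde{\mathcal{V}}\|_{X,\tilde X;p/2}$ is precisely $\|X-\tilde X\|_p+\|\mathbb{X}-\tilde{\mathbb{X}}\|_{p/2}$. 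The paper explicitly notes that Lyons' extension theorem in the literature is phrased with continuous controls or in H\"older scales and hence not directly applicable to the c\`adl\`ag $p$-variation setting, which is exactly why it switches to the controlled-rough-path formulation of \cite{FS,FZ}.

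Your version, however, has a genuine error in the germ. To sew the level-$\ell$ Lyons extension one sews the one-parameter path $t \mapsto \bar{\mathbf{X}}^\ell(0,t)$, whose increments are, by Chen's relation,
\[
\bar{\mathbf{X}}^\ell(0,t)-\bar{\mathbf{X}}^\ell(0,s)
 =\sum_{k\ge 1}\bar{\mathbf{X}}^{\ell-k}(0,s)\otimes\bar{\mathbf{X}}^k(s,t),
\]
so that the correct germ retains the two terms that depend only on the given data:
\[
\Xi^\ell_{s,t}\;=\;\bar{\mathbf{X}}^{\ell-1}(0,s)\otimes X(s,t)\;+\;\bar{\mathbf{X}}^{\ell-2}(0,s)\otimes\mathbb{X}(s,t),
\]
with the lower levels evaluated on $[0,s]$, not $[s,t]$. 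Your germ $\mathrm{Ext}(\mathbf{X})^{\ell-1}(s,t)\otimes X(s,t)+\mathrm{Ext}(\mathbf{X})^{\ell-2}(s,t)\otimes\mathbb{X}(s,t)$ has the right variational size but the wrong leading behaviour (already for smooth $X$ and $\ell=3$ it returns $\tfrac12X(s,t)^{\otimes 3}$ rather than $\tfrac16X(s,t)^{\otimes 3}$), so the sewn object would not be $\bar{\mathbf{X}}^{\ell}$. Relatedly, the cocycle defect of the correct germ consists of terms $\bar{\mathbf{X}}^{a}(0,s)\otimes\bar{\mathbf{X}}^{j}(s,t)\otimes\bar{\mathbf{X}}^{k}(t,u)$ with $j+k\ge 3$; its worst variation exponent is therefore $p/3$, independently of $\ell$, not $p/(\ell+1)$, and the sewing remainder is comparable in $p/3$-variation, not $p/\ell$ versus $p/(\ell+1)$ as you wrote. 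The conclusion $p/3<1$ that makes the sewing lemma applicable is still correct since $p<3$, but these quantitative claims as stated are wrong. If you repair the germ and the exponents, your argument becomes essentially a c\`adl\`ag $p$-variation rendition of Lyons' extension theorem and does yield the result; but it requires more bookkeeping than the paper's controlled-rough-path argument, which hands over both existence of the extension and the Lipschitz estimate in one step.
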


\begin{proof}
  This is a variation of 
\cite[Thm 2.2.2]{Lyo}, see also \cite[Ex 4.6]{FH},  
  though what we need is not a direct consequence of these statements (which are
  given in terms of continuous control functions $\omega (s, t)$ resp. in a
  H\"older setting with $\omega (s, t) = t - s$).   We only illustrate the case
  $\ell = 3$, the general case being similar, giving a new argument based on
  local Lipschitz of higher-oder rough integration. (The case $\ell > 3$ goes
  along the same lines, cf. \cite[Sec 4.5]{FH}.)
  Recall that the space of
  (first order) controlled rough paths, $\mathcal{V}= (V, V') \in \mathscr{D}^{p /
  2}_X,$ is Banach with norm\footnote{Here and below, $(\delta X) (s, t) = X (t) - X (s)$ denotes the increments of
paths in a linear space. We also write, accordingly, $(\delta V - V' \delta X)
(s, t) = V (t) - V (s)  - V' (s) (X (t) - X (s))$, $(Z'' \mathbb{X}) (s, t) =
Z''  (s) \mathbb{X} (s, t)$ and so on.}
  $\| \mathcal{V} \|_{X ; p / 2} \equiv \| \delta V - V'
  \delta X \|_{p / 2} + \| V' \|_p .$ For a $p$-rough path $\mathbf{X} = (X,
  \mathbb{X})$ we have
  \[ \mathcal{V} \times \mathbf{X} \mapsto \left( \int (V, V')^- d (X,
     \mathbb{X}), V, V' \right) =: (Z, Z', Z'') =:
     \mathcal{Z} \in D_{\mathbf{X}}^{p / 3} \]
  where $\mathscr{D}_{\mathbf{X}}^{p / 3}$, the space of second order controlled rough
  paths, is Banach with norm
  \[ \| \mathcal{Z} \|_{\mathbf{X} ; p / 3} : = \| \delta  Z - Z' \delta X - Z''
     \mathbb{X} \|_{p / 3} + \| \delta  Z' - Z'' \delta X \|_{p / 2} + \| Z'' \|_p .
  \]
  Given another rough path $\widetilde{\mathbf{X}}$,
  the generic (local) Lipschitz
  estimate for rough integration gives
  \begin{eqnarray*}
    \| \mathcal{Z}; \tilde{\mathcal{Z}} \|_{\mathbf{X},
    \mathbf{\widetilde{\mathbf{X}}} ; p / 3} & \equiv & \| \delta Z - Z' \delta X
    - Z'' \mathbb{X}- (\tilde{Z} - \tilde{Z}' \delta \tilde{X} - \tilde{Z}''
    \tilde{\mathbb{X}}) \|_{p / 3}\\
    & + & \| \delta  Z' - Z'' \delta X - (\tilde{Z}' - \tilde{Z}'' \delta \tilde{X})
    \|_{p / 2}\\
    & + & \| Z'' - \tilde{Z}'' \|_p\leq c(\| X - \tilde{X} \|_p + \| \mathbb{X}-
     \tilde{\mathbb{X}} \|_{p / 2} + \| \mathcal{V}; \tilde{\mathcal{V}} \|_{X, \tilde{X} ; p / 2})
  \end{eqnarray*}
  where a constant $c$ can be taken uniformly provided $\mathbf{X},
  \widetilde{\mathbf{X}}$ and $\mathcal{V}, \tilde{\mathcal{V}}$ remain
  bounded in their approriate (rough resp. controlled rough path) spaces, and
  $\| \mathcal{V}; \tilde{\mathcal{V}} \|_{X, \bar{X} ; p / 2} \equiv \|
  V' - \tilde{V}' \|_p + \| \delta  V - V' \delta X - (\tilde{V} - \tilde{V}' \delta
  \tilde{X}) \|_{p / 2}$. We can now apply this with $(V, V') = (\mathbb{X},
  X)$, and $(\tilde{V}, \tilde{V}') = (\tilde{\mathbb{X}}, \tilde{X})$, the
  crucial remark being that in this case
  \[ \| \mathcal{V}; \tilde{\mathcal{V}} \|_{X, \tilde{X} ; p / 2} = \| X -
     \tilde{X} \|_p + \| \mathbb{X}- \tilde{\mathbb{X}} \|_{p / 2} . \]
  A moment of reflection (and Chen's relation) reveals that the $p / 3$-variation
  component of $\| \mathcal{Z}; \tilde{\mathcal{Z}} \|_{\mathbf{X},
  \mathbf{\widetilde{\mathbf{X}}} ; p / 3}$ is then nothing but the $p /
  3$-variation of the map
  \[ (s, t) \mapsto \int_s^t \mathbb{X} (s, r -) \otimes d X (r) - \int_s^t
     \tilde{\mathbb{X}} (s, r -) \otimes d \tilde{X} (r) ;
     \]
  we thus see that, for $\ell = 3$,
  \[
  \| \mathrm{Ext}(\mathbf{X})^{\ell} -  \mathrm{Ext}(\tilde{\mathbf{X}})^{\ell}\|_{p / \ell,    [0, T]}
 \le c
     (\| X - \tilde{X} \|_p + \| \mathbb{X}- \tilde{\mathbb{X}} \|_{p / 2}) .
  \]
The argument shows that $c$ can be taken uniformly as $\mathbf{X},\tilde{\mathbf{X}}$ remain in a bounded set, such
as a ball of radius $R$.
To make the dependence on $R$ explicit, we use scaling. Note that $\mathbf{Y} := \delta_{1 / R} \mathbf{X}$, and similar for $\tilde{\mathbf{Y}}$, are of (at most) unit size in the norm
$||| \cdot |||_{p, [0, T]}$. Application of the above estimate,
noting
$ \mathrm{Ext}(\mathbf{Y})^{\ell} = (1/R)^\ell  \mathrm{Ext}(\mathbf{X})^{\ell}$,
similarly for $\tilde{\mathbf{Y}}$, and
$$
      \| Y - \tilde{Y} \|_p  + \| \mathbb{Y}- \tilde{\mathbb{Y}} \|_{p / 2} = R^{- 1} \| X -
   \tilde{X} \|_p + R^{- 2} \| \mathbb{X}- \tilde{\mathbb{X}} \|_{p / 2}
   \le
   \| X - \tilde{X} \|_p +  \| \mathbb{X}- \tilde{\mathbb{X}} \|_{p / 2},
$$
using $R \ge 1$,
shows that $c$ can be taken as $O(R^\ell)$.

  \end{proof}

\subsection{The law of iterated logarithm for iterated sums}

\subsubsection{Almost sure invariance principle in rough paths metrics beyond level-$2$}

\begin{theorem} \label{thm:asIVPell}
  The conclusion of Theorem \ref{thm2.2}, with fixed $p \in (2, 3)$, can be extended to
  any level $\ell \in \mathbb{N}$. That is,
  \[ \| \overline{\mathbf{S}}_N^{\ell} \mathbf{} -
     \mathbf{\overline{W}}_N^{\ell} \|_{p / \ell, [0, T]} = O(N^{-\del})\quad\mbox{a.s.}
     \]
  where $\overline{\mathbf{S}}_N^{\ell} (s, t)$ is given by the rescaled
  $\ell$-fold iterated summation
  \[ \overline{\mathbf{S}}_N^{\ell} (s, t) = N^{- \ell / 2} \sum_{[N s]
     \le k_1 < \cdots < k_{\ell} < [N t]} \xi (k_1) \otimes \cdots
     \otimes \xi (k_{\ell}) \quad \in (\mathbb{R}^e)^{\otimes \ell} \]
  and $\mathbf{\overline{W}}_N = (1,
  \mathbf{\overline{W}}_N^1, \mathbf{\overline{W}}_N^2,
  \ldots)$ can be given as the solution of a ``drift-corrected'' It\^o stochastic
  differential equation
  \[ d \mathbf{\overline{W}}_N = \mathbf{\overline{W}}_N
     \otimes d W_N   + \mathbf{\overline{W}}_N \otimes \Gamma d t,
     \quad \mathbf{\overline{W}}_N (0) = 1, \]
with associated increments $ \mathbf{\overline{W}}_N (s, t) =
  \mathbf{\overline{W}}_N (s)^{- 1} \otimes
 \mathbf{\overline{W}}_N (t)$, and  driving Brownian motion $W_N (t) = N^{- 1 / 2} \mathcal{W} (N t)$.
  \end{theorem}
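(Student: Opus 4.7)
The strategy is to view both $\overline{\mathbf{S}}_N$ and $\overline{\mathbf{W}}_N$ as Lyons extensions of their respective level-$2$ rough paths, and then bootstrap the level-$2$ almost-sure invariance principle of Theorem \ref{thm2.2} to all levels $\ell$ via the local Lipschitz continuity of the Lyons lift. First I would identify the two objects: by Example \ref{exam:sum} applied to the piecewise constant (hence finite variation) c\`adl\`ag path $S_N$, and after accounting for the rescaling factor $N^{-\ell/2}$, the iterated sum $\overline{\mathbf{S}}_N^\ell(s,t)$ is exactly the level-$\ell$ component of the Lyons extension $\mathrm{Ext}(\mathbf{S}_N)$ of the canonical pathwise lift $\mathbf{S}_N = (S_N, \mathbb{S}_N)$. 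By Example \ref{exam:gBRP}, $\overline{\mathbf{W}}_N = \mathrm{Ext}(\mathbf{W}_N)$ is the Lyons extension of the Brownian rough path $\mathbf{W}_N = (W_N, \mathbb{W}_N)$ with parameters $(\Sigma,\Gamma) = (\vs, \hat\vs)$ produced in Theorem \ref{thm2.2}, and its linear It\^o SDE description coincides with the one displayed in the theorem statement.

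Next, I would apply Theorem \ref{thm:LyonsLiftLip} with $\mathbf{X} = \mathbf{S}_N$, $\tilde{\mathbf{X}} = \mathbf{W}_N$ and radius $R_N := \|\mathbf{S}_N\|_{p,[0,T]} + \|\mathbf{W}_N\|_{p,[0,T]}$ to obtain
\[
 \|\overline{\mathbf{S}}_N^\ell - \overline{\mathbf{W}}_N^\ell\|_{p/\ell,[0,T]}
 \le c(\ell,p,R_N)\bigl(\|S_N - W_N\|_{p,[0,T]} + \|\mathbb{S}_N - \mathbb{W}_N\|_{p/2,[0,T]}\bigr).
\]
Theorem \ref{thm2.2} already bounds the parenthesis by $O(N^{-\delta})$ almost surely, so the task reduces to controlling $R_N$. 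Proposition \ref{prop:lil} gives $|||\mathbf{W}_N|||_{p,[0,T]} = O(\sqrt{\log\log N})$ a.s., and the triangle inequality combined with the level-$2$ invariance from Theorem \ref{thm2.2} extends this bound to $\|\mathbf{S}_N\|_{p,[0,T]}$. Hence $R_N = O(\sqrt{\log\log N})$ a.s.

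The main obstacle is making the $R$-dependence of the constant $c(\ell,p,R)$ in Theorem \ref{thm:LyonsLiftLip} explicit enough to be absorbed by the $N^{-\delta}$ rate. The proof of that theorem iterates the local Lipschitz estimate for rough integration one level at a time (as sketched there for $\ell=3$), and at each level the estimate picks up a polynomial factor in the radius; unwinding the recursion yields at worst $c(\ell,p,R) \le C_{\ell,p}(1+R)^{O(\ell)}$. Since $(\log\log N)^{O(\ell)} = o(N^\eta)$ for every $\eta > 0$, a Borel--Cantelli / exponent-adjustment argument in the spirit of Corollary \ref{cor:convwithrates} closes the proof with $O(N^{-\delta'})$ for any $\delta' \in (0,\delta)$. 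The corollary on the law of iterated logarithm for iterated sums then follows by combining this approximation with the classical LIL for multiple Wiener--It\^o integrals of Brownian rough paths.
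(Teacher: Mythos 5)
Your proposal matches the paper's proof: both identify $\overline{\mathbf{S}}_N$ and $\overline{\mathbf{W}}_N$ as Lyons extensions via Examples \ref{exam:sum} and \ref{exam:gBRP}, apply the local Lipschitz estimate of Theorem \ref{thm:LyonsLiftLip} to bootstrap from level $2$, and invoke Theorem \ref{thm2.2} for the level-$2$ rate. Your write-up is in fact a touch more explicit than the paper's, since you spell out how Proposition \ref{prop:lil} controls the radius $R_N$ and why the polylogarithmic growth of $c(\ell,p,R_N)$ is absorbed by the $N^{-\delta}$ rate — a step the paper leaves implicit.
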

\begin{remark} Decomposing $\mathbf{\bar{W}}^\ell (s,t) = \sum \langle \mathbf{\bar{W}} (s,t) , e_w  \rangle e_w$ with sum over all words of length $|w| = \ell$, we note that an explicit combinatorial expression of these coefficients, as linear combinations of iterated It\^o-integrals of time-space Brownian motion, was given in Example \ref{exam:gBRP}.

\end{remark}

\begin{proof}
  By Theorem \ref{thm2.2} the claimed estimate holds true for $\ell = 1, 2$ so that a.s.
  \[ \| S_N - W_N \|_{p, [0, T]} + \| \mathbb{S}_N -\mathbb{W}_N \|_{p / 2,
     [0, T]} = O (N^{- \delta}). \]
     Thanks to $p \in (2,3)$, we can appeal to Theorem \ref{thm:LyonsLiftLip}, noting that the polynomial growth $c=c(R)$ therein is more than enough to allow us to proceed
     similarly to Section \ref{subsec6.4} (Proposition \ref{prop:lil} and Corollary \ref{cor:convwithrates}). It then suffices to recall that the Lyons extension
   $\mathrm{Ext} (S_N, \mathbb{S}_N)$ is precisely given by  $\overline{\mathbf{S}}_N$,  as was pointed out in Example \ref{exam:sum},
     and that the description of $\mathbf{\overline{W}}_N = \mathrm{Ext} (W_N, \mathbb{W}_N)$ is given in Example \ref{exam:gBRP}.
\end{proof}

\subsubsection{Strassen's functional LIL for Brownian rough paths}

A (possibly degenerate) covariance matrix gives rise to a (possibly
degenerate) inner product structure, $\langle \tilde{v}, \tilde{v}
\rangle_{\Sigma^{- 1}} : = \langle v, v \rangle = \sum_{i = 1}^e v_i^2$
when $\tilde{v} = \sqrt{\Sigma} v$ and $+ \infty$ else. Note that $\langle v, v
\rangle_{\Sigma^{- 1}} = \langle \Sigma^{- 1} v, v \rangle$ in the
non-degenerate case. An absolutely continuous path $H : [0, T] \rightarrow
\mathbb{R}^e$, with $H (0) = 0$, is called {\em Cameron--Martin path} if
$\| H \|^2_{\mathcal{H} ; [0, T]} := \int_0^T \langle \dot{H} (t), \dot{H} (t)
\rangle_{\Sigma^{- 1}} d t < \infty$. Every such path lifts canonically to a
rough path $(H, \mathbb{H})$ with $\mathbb{H}= \int \delta H \otimes d H$,
i.e. $\mathbb{H} (s, t) = \int_s^t (H (r) - H (s)) \otimes \dot{H} (r) d r$. We take $T=1$
in what follows and also need the Cameron--Martin unit ball, 
$$
 \mathcal{K} := \{ H \in \mathcal{H}: \| H \|_{\mathcal{H} ; [0, 1]} < \infty \}.
$$

Let now $\mathbf{W} = (W, \mathbb{W})$ be a Brownian rough path with parameters
$(\Sigma, \Gamma)$, with increments of its Lyons extension of the form
\[ \mathbf{\bar{W}} (s, t) = (1, \mathbf{\bar{W}}^1 (s, t), \ldots,
   \mathbf{\bar{W}}^{\ell} (s, t), \ldots). \]
   Set also
 \begin{equation} \label{equ:Kl}
     \mathbf{\mathcal{K}}^{\ell} := \left\{ 
     \mathbf{H}^{\ell} : H  \in {\mathcal{K}}
     \right\},
      \quad \mathbf{H}^{\ell} (s, t) = \int_{\Delta^{\ell}_{s,t}} d H \otimes \ldots \otimes d H.
  \end{equation}

    \begin{proposition} \label{prop:LILBRP}
  For any $\ell \in \mathbb{N}$ and $p > 2$, as $n \to \infty$, a.s.
  \[ \inf_{H \in {\mathcal{K}}} \left\| (2 n \log \log
     n)^{- \frac{\ell}{2}} \mathbf{\bar{W}}^{\ell} (n \cdot, n \cdot) 
     -
     \mathbf{H}^{\ell} 
     \right\|_{p / \ell
     ; [0, 1]} \to 0,\] 
     and the set of limit  points of the above sequence equals $\mathbf{\mathcal{K}}^{\ell}$. 
  \end{proposition}

\begin{remark}
  The same proof yields the same statement in stronger $\alpha \ell$-H\"older
  sense, $\alpha = 1 / p \in (1 / 3, 1 / 2)$.
\end{remark}

\begin{proof}
  Strassen's functional LIL for Brownian motion is a well-known consequence of
  Schilder's theorem. Typically formulated in $\infty$-topology (e.g. \cite{DeuStr}),
   extensions to iterated stochastic integrals \cite{Bal} and to $\alpha$-H\"older topology \cite{BBAK} have appeared in the
  literature.
  Similarly, a Schilder theorem for the Brownian rough
  path gives Strassen's law in rough path topology, as was first seen in the
  $p$-variation, then $\alpha$-H\"older rough path topology, see \cite{LQZ, FV} and references therein.

  Since the afore-mentioned results only deal with standard Brownian motion
  $B$ with $\mathbb{B} = \int \delta B \otimes \circ d B$ we quickly treat
  the case of a general $\mathbf{W} = (W, \mathbb{W})$, a Brownian rough
  path with parameters $(\Sigma, \Gamma)$. To this end, let $\sigma =
  \sqrt{\Sigma}$ and note that
  \[ W = \langle \sigma, B (t) \rangle,\, \mathbb{W} (s, t) = \left\langle
     \sigma \otimes \sigma, \int \delta B \otimes d B \right\rangle +
     \Gamma (t - s) = \langle \sigma \otimes \sigma, \mathbb{B} \rangle +
     \tilde{\Gamma} (t - s), \]
  with It\^o-Stratonovich corrected $\tilde{\Gamma} = {\Gamma} - \Sigma /
  2$. The map
  \[ (B, \mathbb{B}) \mapsto (W, \mathbb{W}) = \mathbf{W} \mapsto
     (\mathbf{\bar{W}}^1, \cdots, \mathbf{\bar{W}}^{\ell}) \equiv
     \mathbf{\bar{W}}^{\le \ell} \]
  is continuous between the appropriate rough path spaces, the contraction
  principle then shows that
  \[ \{ (\varepsilon \mathbf{\bar{W}}^1, \ldots, \varepsilon^{\ell}
     \mathbf{\bar{W}}^{\ell}) : \varepsilon > 0 \} \]
  satisfies a LDP with speed $\varepsilon^2$ in $p$-variation (or
  $1/p$-H\"older) rough path topology, with good rate function
  \[ I (\mathbf{H}) = \frac{1}{2} \| H \|^2_{\mathcal{H} ; [0, 1]} \]
  whenever $\mathbf{H} = \left( \mathbf{H}^1, \ldots, \mathbf{H}^\ell \right)$
is the canonical lift of $H$, and $+
  \infty$ else. Note that this rate function depends on $\Sigma$ but not on
  $\Gamma$. As in  \cite{LQZ, FV}, it then follows that
  Strassen's functional LIL holds in the stated higher order generality,
  \[ \left\| (2 n \log \log n)^{- \frac{\ell}{2}} \mathbf{\bar{W}}^{\ell} (n
     \cdot, n \cdot) ; \mathbf{\mathcal{K}}^{\ell} \right\|_{p / \ell ; [0,
     1]} \rightarrow_{n \rightarrow \infty} 0, \]
     together with the stated characterization of the limit points of this sequence. 
  (We used notation $\| x ; A \| = \inf_{y \in A} \| x
  - y \| .$)
\end{proof}

\subsubsection{Strassen's theorem for iterated sums} Consider
\[ \mathbf{\bar{\xi}}^{\ell} (m, n) := \sum_{m \le k_1 < \cdots
   < k_{\ell} < n} \xi (k_1) \otimes \cdots \otimes \xi (k_{\ell}) \]
and recall that $\mathcal{K} \subset \mathcal{H}$ defines the unit Cameron--Martin ball. 
%

\begin{theorem} For any $\ell \in \mathbb{N}$ and $p > 2$, as $N \to \infty$, a.s.
  \begin{equation} \label{equ:Str1} 
  \inf_{H \in \mathcal{K}} \left\| (2 N \log \log
     N)^{- \frac{\ell}{2}} \mathbf{\bar{\xi}}^{\ell} ([N \cdot], [N \cdot]) 
     -
    \mathbf{H}^\ell
    \right\|_{p / \ell
     ; [0, 1]} \rightarrow 0 
     \end{equation} 
    and the set of limit points given by $\eqref{equ:Kl}$. 
   In particular, 
   \begin{equation} \label{equ:Str2} 
    \inf_{H \in \mathcal{K}} \left| (2 N \log \log
     N)^{- \frac{\ell}{2}} \sum_{0 \le k_1 < \cdots < k_{\ell} < N} \xi
     (k_1) \otimes \cdots \otimes \xi (k_{\ell}) 
      -
      \mathbf{H}^\ell(0,1)
    \right| \to 0,
     \end{equation} 
     with  set of limit points given by 
     $ \left\{ 
     \mathbf{H}^{\ell}(0,1) : H  \in {\mathcal{K}} \right\}$.
     \end{theorem}

\begin{proof}
  (i) Recall $\overline{\mathbf{S}}_N^{\ell} (s, t) = N^{- \ell / 2}
  \mathbf{\bar{\xi}}^{\ell} ([N s], [N t])$ is precisely the Lyons lift of
  $(S_N, \mathbb{S}_N) .$ Recall also $\mathbf{\overline{W}}_N^{\ell}
  (s, t) = N^{- \ell / 2} \mathbf{\overline{W}}^{\ell} (N s, N t)$.
  Then $\overline{\mathbf{S}}_N^{\ell} \mathbf{} (\cdot, \cdot) -
  \mathbf{\overline{W}}_N^{\ell} (\cdot, \cdot) = O (N^{- \delta})$
  from Theorem \ref{thm:asIVPell} above shows
  \[ \mathbf{\bar{\xi}}^{\ell} ([N \cdot], [N \cdot]) -
     \mathbf{\overline{W}}^{\ell} (N \cdot, N \cdot) = O (N^{\ell /
     2 - \delta}), \]
  always in $p / \ell$-variation sense on $[0, 1]$, hence
  \[ (2 N \log \log N)^{- \frac{\ell}{2}} \mathbf{\bar{\xi}}^{\ell} ([N
     \cdot], [N \cdot]) - (2 N \log \log N)^{- \frac{\ell}{2}}
     \mathbf{\overline{W}}^{\ell} (N \cdot, N \cdot) = o (1) \]
  and so the functional LIL for $\mathbf{\bar{\xi}}^{\ell}$,
  as stated in \eqref{equ:Str1}, follows directly from the one for
  $\mathbf{\overline{W}}^{\ell}$
  in Proposition \ref{prop:LILBRP} above. 
As for \eqref{equ:Str2} it suffices to note that any variation norm on $[0, 1]$
  dominates the increment over the unit interval.
\end{proof}

The following corollary can be seen as generalization of \cite[Cor. 3.2]{Bal} 
which dealt with iterated Brownian integrals.

\begin{corollary} \label{cor:BaldiGen}
  Let $A \in (\mathbb{R}^e)^{\otimes \ell}$ and define the tensor contraction
  $\mathcal{X}^{\ell}_N := \left\langle A, \sum_{0 \le k_1 < \cdots
  < k_{\ell} < N} \xi (k_1) \otimes \cdots \otimes \xi (k_{\ell})
  \right\rangle$ with values in the reals. Then
  \[ P \left( \limsup_{N \rightarrow \infty} \frac{\mathcal{X}^{\ell}_N}{(2 N
     \log \log N)^{\frac{\ell}{2}}} = M \right) = 1 \]
  with
  \begin{equation} \label{equ:M} 
  M = \sup \left\{ \left\langle A, \int_{\Delta_{0, 1}^{\ell}} d H
     \otimes \ldots \otimes d H \right\rangle : \| H \|_{\mathcal{H} ; [0, T]}
     \le 1 \right\} . 
 \end{equation}
\end{corollary}

\begin{proof} Immediate from \eqref{equ:Str2} and accompanying description of the limit set. 
\end{proof} 


\subsection{The law of iterated logarithm for iterated integrals}

The arguments of the last section immediately extend to the case of iterated integrals, and in particular lead to
a proof of Corollary \ref{cor2.6}. The analogue of Theorem \ref{thm:asIVPell} reads
\begin{theorem} \label{thm:asIVPellint}
  The conclusion of Theorem \ref{thm2.5}, with fixed $p \in (2, 3)$, can be extended to
  any level $\ell \in \mathbb{N}$. That is,
  \[ \| \overline{\mathbf{V}}_{\ell}^\eps  -
     \mathbf{\overline{W}}_{\ell}^\eps \|_{p / \ell, [0, T]} = O(N^{-\del})\quad\mbox{a.s.}
     \]
  where $ \overline{\mathbf{V}}_{\ell}^\eps$ is given by the rescaled  $\ell$-fold iterated integrals,
  $$ \overline{\mathbf{V}}_{\ell}^\eps (s,t) = \ve^\ell 
    \int_{\{ s\bar\tau\ve^{-2} \le r_1 \le
     \cdots \le r_{\ell} \le t\bar\tau\ve^{-2} \}} \xi (r_1) \otimes \cdots
     \otimes \xi (r_{\ell}) d r_1 \cdots d r_\ell
  $$
  and $\mathbf{\overline{W}}_N = (1,
  \mathbf{\overline{W}}_N^1, \mathbf{\overline{W}}_N^2,
  \ldots)$ exactly as in Theorem \ref{thm:asIVPell}, just with updated covariance for the Brownian motion $\mathbf{\overline{W}}_N^1 = W_N$, namely the covariance given in Theorem \ref{thm2.5}, and $\Gamma = \{ \Gamma_{ij}: 1 \le i,j \le e \}$ given by
$$
   \Gamma_{ij} = \sum_{l=1}^\infty E(\eta_i(0)\eta_j(l))+
 E \Big( \int_0^{\tau(\om)}\xi_j(s,\om)ds\int_0^s\xi_i(u,\om)du \Big).
$$
  \end{theorem}
\begin{proof} Similar to Theorem \ref{thm:asIVPell}: by Theorem \ref{thm2.5} the claimed estimate holds true for $\ell = 1, 2$ and thanks to $p \in (2,3)$, we can appeal to Theorem \ref{thm:LyonsLiftLip}, noting that the Lyons extension of $V^\eps$ is precisely given by $\overline{\mathbf{V}}^\eps = (V^\eps, \mathbb{V}^\eps)$. %
\end{proof}

We can now deduce, as in the discrete case, a functional LIL for iterated integrals from the corresponding statement for Brownian rough paths; Proposition \ref{prop:LILBRP}. We set
  $$
     \mathbf{\bar{\xi}}^{\ell} (s, t) :=  \int_{\{ s \le r_1 \le
     \cdots \le r_{\ell} \le t \}} \xi (r_1) \otimes \cdots
     \otimes \xi (r_{\ell}) d r_1 \cdots d r_\ell \quad \in (\mathbb{R}^e)^{\otimes \ell}.
  $$

\begin{theorem} For any $\ell \in \mathbb{N}$ and $p > 2$, as $N \to \infty$, a.s.
  \[ \inf_{H \in \mathcal{K}} \left\| (2 N \log \log
     N)^{- \frac{\ell}{2}} \mathbf{\bar{\xi}}^{\ell} (\bar\tau N \cdot, \bar\tau N \cdot) -
     \mathbf{H}^{\ell}
     \right\|_{p / \ell
     ; [0, 1]} \rightarrow 0 \]
     and the set of limit points given by $\eqref{equ:Kl}$. 
  In particular, 
  \[  \inf_{H \in \mathcal{K}} \left| (2 N \log \log
     N)^{- \frac{\ell}{2}}
      \mathbf{\bar{\xi}}^{\ell} (0, \bar\tau N )
      -
     \mathbf{H}^{\ell}(0,1)
 \right| \to 0.
     \]
     with  set of limit points given by 
     $ \left\{ 
     \mathbf{H}^{\ell}(0,1) : H  \in {\mathcal{K}} \right\}$.
     \end{theorem}


We have, as before, 

\begin{corollary} \label{cor:BaldiGenInt}
  Let $A \in (\mathbb{R}^e)^{\otimes \ell}$ and define the real-valued tensor contraction
  $\mathcal{X}^{\ell}_N := \left\langle A,
  \mathbf{\bar{\xi}}^{\ell} (0, N )
  \right\rangle . $ 
  Then, with $M$ given in \eqref{equ:M}, 
  \[ P \left( \limsup_{N \rightarrow \infty} \frac{\mathcal{X}^{\ell}_N}{(2 N
     \log \log N)^{\frac{\ell}{2}}} = M / \bar\tau \right) = 1. \]
\end{corollary}

\subsection{Remarks on iterated sums and integrals}

Iterated integrals and sums of the type considered in Corollaries \ref{cor:BaldiGen} and \ref{cor:BaldiGenInt} are of interest
in data science. Specifically, iterated integrals have given rise to a popular feature set of machine learning applications, the lectures notes \cite{CK} constitute an excellent source of information. Iterated sums, a.k.a. {\em iterated-sums signatures} are a natural variant, specifically for feature extraction of time-series, see e.g. 
 \cite{BKPASL, DEFT, CGGOT}. Strictly speaking, they allow for additional integer
powers of the $\xi$'s. An extension of Corollary \ref{cor:BaldiGen} in this direction is not difficult, e.g. using results of  \cite{FZ}, but this would require an algebraic setup in terms of quasi-shuffle or Grossmann--Larson Hopf algebras
that would lead us too far astray. 


\end{document}